% Versi\'{o}n final enviada el 15 de marzo de 2016
\documentclass[12pt]{article}
\usepackage{amsmath,amssymb}
\usepackage{color,graphicx}

\newcommand{\bu}{\boldsymbol u}
\newcommand{\bv}{\boldsymbol v}
\newcommand{\bw}{\boldsymbol w}
\newcommand{\bz}{\boldsymbol z}
\newcommand{\bt}{\boldsymbol t}
\newcommand{\btt}{\tilde{\boldsymbol t}}

\newcommand{\be}{\boldsymbol e}
\newcommand{\bU}{\boldsymbol U}
\newcommand{\bs}{\boldsymbol s}
\newcommand{\bg}{\boldsymbol g}
\newcommand{\bff}{\boldsymbol f}

\newtheorem{Theorem}{Theorem}
\newtheorem{lema}{Lemma}
\newcounter{remark}
\def\theremark {\arabic{remark}}
\newenvironment{remark}{\refstepcounter{remark}\par\noindent{\bf Remark\ \theremark}\ }{\par}
\newtheorem{Proof}{Proof}

\newenvironment{proof}{\begin{Proof}\rm}{\hfill $\Box$ \end{Proof}}

\usepackage{hyperref}
\usepackage[all]{hypcap}

\textheight=9truein
\textwidth=6.3truein
\topmargin=-0.75cm
\oddsidemargin=0pt

\title{Analysis of the grad-div stabilization for the time-dependent Navier--Stokes equations
with inf-sup stable finite elements
}
\author{Javier de Frutos\thanks{Instituto de Investigaci\'on en Matem\'aticas (IMUVA),
Universidad de Valladolid, Spain.  Research supported by Spanish MINECO
under grants MTM2013-42538-P (MINECO, ES) and MTM2016-78995-P (AEI/FEDER, UE) (frutos@mac.uva.es)} \and Bosco
Garc\'{\i}a-Archilla\thanks{Departamento de Matem\'atica Aplicada
II, Universidad de Sevilla, Sevilla, Spain. Research supported by
Spanish MINECO under grant MTM2015-65608-P (bosco@esi.us.es)}
\and
Volker John\thanks{Weierstrass Institute for Applied Analysis and Stochastics,
Leibniz Institute in Forschungsverbund Berlin e. V. (WIAS), Mohrenstr. 39, 10117 Berlin, Germany and
Freie Universit\"at Berlin,
Department of Mathematics and Computer Science,
Arnimallee 6, 14195 Berlin, Germany.}
  \and Julia Novo\thanks{Departamento de
Matem\'aticas, Universidad Aut\'onoma de Madrid. Research supported
by Spanish MINECO
under grants MTM2013-42538-P (MINECO, ES) and MTM2016-78995-P (AEI/FEDER, UE) (julia.novo@uam.es)}}
\date{\today}
\begin{document}
\maketitle
\abstract{
This paper studies
inf-sup stable finite element discretizations of
the evolutionary Navier--Stokes equations with
 a grad-div type stabilization.
The analysis covers both
 the case in which the solution is assumed to be smooth
and consequently has to satisfy nonlocal
compatibility conditions as well as the practically relevant situation in which the
nonlocal compatibility conditions are not satisfied.
The constants in the error bounds obtained do not depend
on negative powers of the viscosity.  Taking into account the loss of regularity suffered by the solution
of the Navier--Stokes equations at the initial time in the absence of nonlocal compatibility conditions of the data, error bounds of order $\mathcal O(h^2)$ in space are proved. The analysis is optimal for
quadratic/linear inf-sup stable pairs of finite elements. Both the continuous-in-time case
and the fully discrete scheme with the backward Euler method as time integrator
are analyzed.}

\vskip 0.5cm
\noindent
{\bf Keywords}
Incompressible Navier--Stokes equations; inf-sup stable finite element methods; grad-div stabilization; error bounds independent of the viscosity; nonlocal compatibility condition; backward Euler method

\section{Introduction}

Let $\Omega \subset {\mathbb R}^d$, $d \in \{2,3\}$, be a bounded domain with
polyhedral  and Lipschitz boundary $\partial \Omega$. The incompressible
Navier--Stokes equations model the conservation of linear momentum and the
conservation of mass (continuity equation) by
\begin{align}
\label{NS} \partial_t\bu -\nu \Delta \bu + (\bu\cdot\nabla)\bu + \nabla p &= \bff &&\text{in }\ (0,T)\times\Omega,\nonumber\\
\nabla \cdot \bu &=0&&\text{in }\ (0,T)\times\Omega,\\
\bu(0, \cdot) &= \bu_0(\cdot)&&\text{in }\ \Omega,\nonumber
\end{align}
where $\bu$ is the velocity field, $p$ the pressure, $\nu>0$ the viscosity coefficient,
$\bu_0$ a given initial velocity, and $\bff$ represents external forces acting
on the fluid. The Navier--Stokes equations \eqref{NS} are equipped with homogeneous
Dirichlet boundary conditions $\bu = \boldsymbol 0$ on $\partial \Omega$.

The interest of this paper is the case of small viscosity or, equivalently, high Reynolds
number. To this end, a Galerkin finite element method augmented with a grad-div stabilization term
for \eqref{NS} is considered. Grad-div stabilization
adds a penalty term with respect to the continuity equation to the momentum equation.
It was originally proposed in \cite{FH88} to improve the conservation of mass in
finite element methods. There are a number of papers analyzing the grad-div
stabilization for steady-state problems, e.g., \cite{JJLR14,Ols02,OR04}. On the one hand,
it is known that while grad-div stabilization
improves mass conservation, the computed finite element
velocities are by far not divergence-free \cite{JLMNR15}.
On the other hand, it was observed in the
simulation of turbulent flows that using exclusively grad-div stabilization
resulted in stable simulations, compare \cite[Fig.~3]{JK10} and \cite[Fig.~7]{RL10}.
This observation is one of the motivations for the
present paper: to derive error bounds for the Galerkin finite element method with grad-div
stabilization whose constants do not depend on inverse powers of $\nu$. The analysis will
be performed for pairs of finite element spaces that satisfy a discrete inf-sup condition.
Error bounds with constants independent of $\nu$ were previously  obtained in \cite{los_cuatro_oseen} for the evolutionary Oseen equations. Contrary to the present paper,
the wind velocity in the convective term of the Oseen equations  is divergence-free and
this property considerably simplifies the analysis. Besides extending the
analysis from \cite{los_cuatro_oseen},
more realistic conditions on the initial data are assumed in the present paper, conditions which affect the regularity near the initial time.

An analysis of inf-sup stable elements with divergence-free approximations of the Navier--Stokes equations is presented in  \cite{Scho_Lube}. There,  error bounds independent of
negative powers of $\nu$ were proved for the Galerkin method without any stabilization,
utilizing ideas, e.g., from \cite{los_cuatro_oseen}.
Adding a grad-div stabilization term as in the present paper allows the use of more general, not necessarily divergence-free, finite elements.

Some related works analyzing stabilized finite element approximations to the Navier--Stokes equations
include \cite{Burman_Fer_numer_math}, where the continuous interior penalty method
is studied and \cite{Lube_etalNS,DAL16}, where
the local projection stabilization (LPS) method is studied.
It is discussed in \cite{LAD16} that the case of the Navier--Stokes equations with grad-div stabilization
but without LPS method can be considered as a special case of the analysis presented in \cite{Lube_etalNS}. Notice however that
the error bounds in~\cite{Lube_etalNS} depend explicitly on inverse powers of the viscosity parameter~$\nu$, unless grids are taken sufficiently fine ($h \lesssim \sqrt\nu$, where $h$ is the mesh width), whereas this is not the case in the present paper.
%\revi{It is discussed in \cite{LAD16} that the case of the Navier--Stokes equations with grad-div stabilization
%but without LPS method  can be considered as a special case of the analysis presented in \cite{Lube_etalNS},
%however leading to error bounds that do not explicitly depend on the viscosity only for sufficiently fine grids ($h \lesssim \sqrt\nu$, where $h$ is the mesh width).}
In
\cite{Burman_cmame2015}, error bounds for stabilized finite element approximations to the Navier--Stokes equations are obtained
depending on an exponential factor proportional to the $L^\infty(\Omega)$ norm of the gradient of the large eddies instead of the gradient of the
full velocity $\bu$ in the case that $\Omega$ is the unit square and the boundary conditions are periodic. An analysis of a
fully discrete method based on  LPS  in space and the Euler method in time is carried out
in \cite{Ahmed_etal}. The error bounds in \cite{Ahmed_etal} are not independent of
negative powers of $\nu$. In all these papers, some stabilization terms are added to the
Galerkin formulation. In particular, all these methods, save the method studied in \cite{LAD16}, include a stabilization for the convective term. The aim of  the  present paper consists in deriving error bounds that are
independent of inverse powers of the viscosity parameter for  finite element approximations
that do not include a stabilization of the convective term.

In the present paper, optimal error bounds with constants that do not
depend explicitly  on inverse powers of the viscosity parameter will be  obtained for the $L^2(\Omega)$ norm of the divergence of
the velocity, which measures the closeness of the velocity approximation of being
divergence-free, and the $L^2(\Omega)$ norm of the pressure, assuming that the solution is sufficiently smooth.
In addition,
an error  bound for $\nu^{1/2}$ times the gradient of the velocity is proved.
This error bound is optimal in the viscosity-dominated regime,
although it is a weak term in the convection-dominated regime.
Note that all error bounds might depend implicitly on the viscosity through the
dependency on higher order Sobolev norms of the solution of the continuous problem.

In Section~\ref{Se:smooth}, it will be assumed that the solution satisfies nonlocal compatibility conditions. The analysis is valid for pairs
of inf-sup stable mixed finite elements of any degree. In the case of first order mixed finite elements, the error bound for the pressure can be proved only in two spatial dimensions.

 Due to the increasing use of higher order methods in computational fluid dynamics, the
 question of optimal approximation of the Navier--Stokes equations under realistic
 assumptions of the data has become important. The regularity customarily
 hypothesized in the error analysis for parabolic problems generally cannot be expected
 for the Navier--Stokes
 equations. No matter how regular the initial data are, solutions of the Navier--Stokes equations cannot be assumed to have more than second order spatial derivatives bounded in $L^2(\Omega)$ up to the initial time $t=0$. Higher regularity requires the solution to satisfy some nonlocal compatibility conditions that are  unlikely to be fulfilled in practical situations \cite{heyran0,heyran2}. Taking into account this loss of
 regularity at $t=0$ locally in time, the optimal rate of convergence of the grad-div mixed
 finite element method is studied in Section~\ref{sec:ana_wo_comp}. The analysis of \cite{Ahmed_etal,Lube_etalNS,Burman_Fer_numer_math,Burman_cmame2015,Scho_Lube}
assumes that the solution
satisfies nonlocal compatibility conditions. To the best of
our knowledge, the present paper is the first one where error bounds independent of
the viscosity parameter are proved without those assumptions, and the best bounds that
we obtain are not better than $\mathcal O(h^2)$. In the literature,
\cite{refined,refined_fully,heyran0,heyran2},
error bounds up to~$\mathcal O(h^5 |\log(h)|)$ have been obtained
for both standard and two-grid mixed finite element methods without assuming nonlocal compatibility conditions.
But contrary to the $\mathcal O(h^2)$ bounds in the present paper, the error constants in those $\mathcal O(h^5 |\log(h)|)$  bounds depend on~$\nu^{-1}$.

In Section~\ref{se:fully}, the analysis of the fully discrete case is presented. For the time integration, the
implicit Euler  method is considered. Again, both the regular case and the case in which nonlocal compatibility conditions are not assumed are analyzed.
In this last case, the errors are shown to be
$\mathcal O(h^2|\log(\Delta t)|^{1/2}+(\Delta t)^{1/2})$,
where $\Delta t$ is the size of the time step.

Section~\ref{sec:numres} provides numerical studies supporting the analytical results and a summary finishes the paper.

\section{Preliminaries and notation}\label{sect:prelim}

Throughout the paper, $W^{s,p}(D)$ will denote the Sobolev space of real-valued functions defined on the domain $D\subset\mathbb{R}^d$ with distributional derivatives of order up to $s$ in $L^p(D)$. These spaces are endowed with the usual norm denoted by $\|\cdot\|_{W^{s,p}(D)}$.
 If $s$ is not a positive integer, $W^{s,p}(D)$ is defined by interpolation \cite{Adams}.
 In the case $s=0$, it is $W^{0,p}(D)=L^p(D)$. As it is standard, $W^{s,p}(D)^d$ will be endowed with the product norm and, since no confusion can arise, it will be denoted again by $\|\cdot\|_{W^{s,p}(D)}$. The case  $p=2$  will be distinguished by using $H^s(D)$ to denote the space $W^{s,2}(D)$. The space $H_0^1(D)$ is the closure in $H^1(D)$ of the set of infinitely differentiable functions with compact support
in $D$.  For simplicity, $\|\cdot\|_s$ (resp. $|\cdot |_s$) is used to denote the norm (resp. seminorm) both in $H^s(\Omega)$ or $H^s(\Omega)^d$. The exact meaning will be clear by the context. The inner product of $L^2(\Omega)$ or $L^2(\Omega)^d$ will be denoted by $(\cdot,\cdot)$ and the corresponding norm by $\|\cdot\|_0$. The norm of the space of essentially bounded functions $L^\infty(\Omega)$ will be denoted by $\|\cdot\|_\infty$. For vector-valued functions, the same conventions will be used as before.
The norm of the dual space  $H^{-1}(\Omega)$  of $H^1_0(\Omega)$
is denoted by $\|\cdot\|_{-1}$.
As usual, $L^2(\Omega)$ is always identified
with its dual, so one has $H^1_0(\Omega)\subset L^2(\Omega)\subset H^{-1}(\Omega) $ with compact injection.

Using the function spaces
$
V=H_0^1(\Omega)^d$, and $$ Q=L_0^2(\Omega)=\left\{q\in L^2(\Omega):
(q,1)=0\right\},
$$
the weak formulation of problem (\ref{NS}) is as follows:
Find $(\bu,p)\in V\times Q$ such that for all $(\bv,q)\in V\times Q$,
\begin{equation}\label{eq:NSweak}
(\partial_t\bu,\bv)+\nu (\nabla \bu,\nabla \bv)+((\bu\cdot \nabla) \bu,\bv)-(\nabla \cdot \bv,p)
+(\nabla \cdot \bu,q)=(\boldsymbol f,\bv).
\end{equation}

The Hilbert space
$$
H^{\rm div}=\{ \bu \in L^{2}(\Omega)^d \ \mid \ \nabla \cdot \bu=0, \,
\bu\cdot \mathbf n|_{\partial \Omega} =0 \}$$
will be endowed with the inner product of $L^{2}(\Omega)^{d}$ and the space
$$V^{\rm div}=\{ \bu \in V \ \mid \ \nabla \cdot \bu=0 \}$$
with the inner product of $V$.

Let $\Pi\ :\
L^2(\Omega)^d \rightarrow H^{\rm div}$ be the Leray projector that maps each function in $L^2(\Omega)^d $
onto its divergence-free part (see e.g. \cite[Chapter IV]{Constantin-Foias}. The Stokes
operator in $\Omega$ is given by
$$
A\ : \ \mathcal{D}({A})\subset H^{\rm div} \rightarrow H^{\rm div}, \quad \, {A}=-\Pi\Delta , \quad
\mathcal{D}({A})=H^{2}(\Omega)^{d} \cap V^{\rm div}.
$$
The following Sobolev's embedding \cite{Adams} will be used in the analysis: For  $1\le p<d/s$
let $q$ be such that $\frac{1}{q}
= \frac{1}{p}-\frac{s}{d}$. There exists a positive constant $C$, independent of $s$, such
that
\begin{equation}\label{sob1}
\|v\|_{L^{q'}(\Omega)} \le C \| v\|_{W^{s,p}(\Omega)}, \quad
\frac{1}{q'}
\ge \frac{1}{q}, \quad v \in
W^{s,p}(\Omega).
\end{equation}
If $p>d/s$ the above relation is valid for $q'=\infty$. A similar embedding inequality holds for vector-valued functions.

Let $V_h\subset V$ and $Q_h\subset Q$ be two families of finite element
spaces composed of piecewise polynomials of
degrees at most $k$ and $l$, respectively, that correspond to a family of partitions $\mathcal T_h$ of
$\Omega$ into mesh cells with maximal diameter $h$. In this paper, we will only consider pairs of finite element spaces satisfying
the discrete inf-sup condition,
\begin{equation}\label{LBB}
\inf_{q_h\in Q_h}\sup_{\bv_h\in V_h}\frac{(\nabla \cdot \bv_h,q_h)}{\|\nabla \bv_h\|_0\|q_h\|_0}\ge \beta_0,
\end{equation}
with $\beta_0>0$,  a constant independent of the mesh size $h$. For example, for the
MINI element it is  $k=l=1$ and for the Hood--Taylor element one has
$l=k-1$. Since the error bounds for the pressure depend both on the mixed
finite element used and on the regularity of the solution,
and in general it  will be assumed that  $p\in Q\cap H^k(\Omega)$ with $l\ge k-1$,
in the sequel the error bounds will be written  depending only on $k$.

It will be assumed that the family of
meshes is quasi-uniform and that  the following inverse
inequality holds for each $v_{h} \in V_{h}$, see e.g., \cite[Theorem 3.2.6]{Cia78},
\begin{equation}
\label{inv} \| \bv_{h} \|_{W^{m,p}(K)} \leq C_{\mathrm{inv}}
h_K^{n-m-d\left(\frac{1}{q}-\frac{1}{p}\right)}
\|\bv_{h}\|_{W^{n,q}(K)},
\end{equation}
where $0\leq n \leq m \leq 1$, $1\leq q \leq p \leq \infty$, and $h_K$
is the size (diameter) of the mesh cell $K \in \mathcal T_h$.

The space of discrete divergence-free functions is denoted by
$$
V_h^{\rm div}=\left\{\bv_h\in V_h\ \mid\ (\nabla\cdot \bv_h,q_h)=0\quad \forall q_h\in Q_h \right\},
$$
and by $A_h\ : \ V_h^{\rm div}\rightarrow V_h^{\rm div}$ is denoted the following linear operator
\begin{equation}\label{eq:A_h}
(A_h \bv_h,\bw_h)=(\nabla \bv_h,\nabla \bw_h)\quad \forall \bv_h,\bw_h\in V_h^{\rm div}.
\end{equation}
Note that from this definition, it follows for $\bv_h \in V_h^{\rm div}$ that
$$\|A_h^{1/2}\bv_h\|_0 =
\|\nabla \bv_h\|_0, \quad \|\nabla A_h^{-1/2}\bv_h\|_0 =
\|\bv_h\|_0.
$$
Additionally, two linear operators $C_h\ :\ V_h^{\rm div}\rightarrow V_h^{\rm div}$
and $D_h:L^2(\Omega)\rightarrow V_h^{\rm div}$ are defined by
\begin{align}
(C_h \bv_h,\bw_h)&=(\nabla \cdot \bv_h,\nabla \cdot \bw_h)\quad\,\, \forall \bv_h,\bw_h\in V_h^{\rm div}, \label{eq:C_h}\\
(D_h p,\bv_h)&=-(p,\nabla \cdot \bv_h)\quad\quad\forall \bv_h\in V_h^{\rm div}.\label{eq:D_h}
\end{align}

In what follows,
$\Pi_h^{\rm div}\ : \ L^2(\Omega)^d\rightarrow V_h^{\rm div}$ will denote the so-called discrete Leray projection, which is the
orthogonal projection of $L^2(\Omega)^d$ onto $V_h^{\rm div}$
\begin{equation}\label{eq:Pi_h}
\left(\Pi_h^{\rm div}\bv,\bw_h\right) = (\bv,\bw_h)\quad \forall \bw_h \in V_h^{\rm div}.
\end{equation}
By definition, it is clear that the projection is stable
in the $L^2(\Omega)^d$ norm: $\|\Pi_h^{\rm div}\bv\|_0 \le \|\bv\|_0$ for all
$\bv \in L^2(\Omega)^d$.
The following well-known bound will be used
\begin{equation}\label{eq:cota_pih}
\|(I-\Pi_h^{\rm div})\bv\|_0+h\|(I-\Pi_h^{\rm div})\bv\|_1\le C h^{j+1}\|\bv\|_{j+1}\quad \forall\bv\in V^{\rm div}\cap H^{j+1}(\Omega)^d,
\end{equation}
for~$j=0,\ldots,k$.

Denoting by $\pi_h$ the $L^2(\Omega)$ projection
onto $Q_h$, one has that for $0\le m\le 1$
\begin{equation}\label{eq:pressurel2}
\|q-\pi_hq\|_m\le C h^{j+1-m}\|q\|_{j+1} \quad \forall q\in H^{j+1}(\Omega),
\quad j=0,\ldots,l.
\end{equation}
For simplicity of presentation,  the notation
$\pi_h$ will be used instead of~$\pi_hp$ for the pressure~$p$ in (\ref{NS}).

In the error analysis, the Poincar\'{e}--Friedrichs inequality
\begin{equation}\label{eq:poin}
\|\bv\|_{0} \leq
C\|\nabla \bv\|_{0}\quad\forall \bv\in H_0^1(\Omega)^d,
\end{equation}
will be used.

In the sequel,  $I_h \bu \in V_h$ will denote
the Lagrange interpolant of a continuous function $\bu$. The following bound can be found in \cite[Theorem 4.4.4]{brenner-scot}
\begin{equation}\label{cota_inter}
|\bu-I_h \bu|_{W^{m,p}(K)}\le c_\text{\rm int} h^{n-m}|\bu|_{W^{n,p}(K)},\quad 0\le m\le n\le k+1,
\end{equation}
where $n>d/p$ when $1< p\le \infty$ and $n\ge d$ when $p=1$.

In the analysis, the Stokes problem
\begin{align}\label{eq:stokes_str}
-\nu \Delta \bu+\nabla p&={\bg}\quad\mbox{in }\Omega, \nonumber\\
\bu&=\boldsymbol 0\quad\mbox{on } \partial \Omega,\\
\nabla \cdot \bu&=0\quad\mbox{in } \Omega,\nonumber
\end{align}
will be considered. Let us denote by
$(\bu_h,p_h)\in V_h \times Q_h$  the mixed finite element approximation to \eqref{eq:stokes_str},
given by
\begin{equation*}%\label{eq:gal_stokes}
\begin{split}
\nu(\nabla \bu_h,\nabla \bv_h)-(\nabla \cdot \bv_h, p_h)&=({\bg},\bv_h)\quad \forall \bv_h\in V_h,\\
(\nabla \cdot \bu_h,q_h)&=0\quad\quad\,\,\,  \forall q_h\in Q_h.
\end{split}
\end{equation*}
Following \cite{girrav,Joh16}, one gets the estimates
\begin{align}\label{eq:cota_stokes_v1}
\|\bu-\bu_h\|_1&\le  C \left(\inf_{\bv_h\in V_h}\|\bu-\bv_h\|_1+\nu^{-1}\inf_{q_h\in Q_h}\|p-q_h\|_0\right),\\
\|p-p_h\|_0&\le C\left( \nu \inf_{\bv_h\in V_h}\|\bu-\bv_h\|_1+
\inf_{q_h\in Q_h}\|p-q_h\|_0\right),\label{eq:cota_stokes_pre}\\
\|\bu-\bu_h\|_0&\le Ch\left(\inf_{\bv_h\in V_h}\|\bu-\bv_h\|_1 + \nu^{-1}\inf_{q_h\in Q_h}\|p-q_h\|_0\right).
\label{eq:cota_stokes_v0}
\end{align}
It can be observed that the error bounds for the velocity depend on negative powers of $\nu$.

For the analysis, it will be advantageous to use a projection of $(\bu,p)$ into $V_h \times Q_h$
with uniform in $\nu$, optimal, bounds for the velocity. In \cite{los_cuatro_oseen} a projection with this property
 was introduced.
Let  $(\bu,p)$ be the solution of the Navier--Stokes equations \eqref{NS} with $\bu\in V\cap H^{k+1}(\Omega)^d$,
$p \in Q\cap H^{k}(\Omega)$,
$k\ge 1$,
and observe that $(\bu,0)$ is the solution of  the Stokes problem \eqref{eq:stokes_str} with
 right-hand side
\begin{equation}\label{eq:stokes_rhs_g}
{\bg}={\bff}-\partial_t \bu-(\bu\cdot \nabla)\bu-\nabla p.
\end{equation}
Denoting the corresponding
Galerkin approximation in $V_h\times Q_h$ by $({\bs_h},l_h)$, one obtains from
\eqref{eq:cota_stokes_v1}--\eqref{eq:cota_stokes_v0}
\begin{align}\label{eq:cotanewpro}
\|\bu-{\bs}_h\|_0+h\|\bu-{\bs}_h\|_1&\le C h^{j+1}\|\bu \|_{j+1},\quad 0\le j\le k,\\
\label{eq:cotanewpropre}
\|l_h\|_0&\le C \nu h^{j}\|\bu\|_{j+1},\quad \hphantom{0}0\le j\le k,
\end{align}
where the constant $C$ does not depend on $\nu$.

\begin{remark}\label{rem:estimate_partial_t}
Assuming the necessary smoothness in time and considering \eqref{eq:stokes_str} with
$$
{\bg} = \partial_t\left({\bff}-\partial_t \bu-(\bu\cdot \nabla)\bu-\nabla p\right),
$$
one can derive an error bound of the form \eqref{eq:cotanewpro}
also for $\partial_t (\bu-{\bs}_h)$. One can proceed similarly for higher order
derivatives in time. In Section~\ref{sec:ana_wo_comp}, where boundedness of derivatives up to~$t=0$ is not assumed,
the bound~\eqref{eq:cotanewpro} is also valid, but then the quantities assumed to be bounded
up to~$t=0$ are
$t^{(j-1)/2}\|\bu(t)\|_{j+1}$,   $t^{(j+1)/2}\|\partial_t\bu(t)\|_{j+1}$,
$t^{(j+3)/2}\|\partial_{tt}\bu(t)\|_{j+1}$, etc. Note that for a given $t_0>0$, the assumptions in the present section hold for~$t\ge t_0$,
and those of~Section~\ref{sec:ana_wo_comp} for~$0\le t\le t_0$.
\end{remark}

Following \cite{chenSiam},
one can also obtain the following bound for ${\bs}_h$
\begin{align}
\|\nabla (\bu-{\bs}_h)\|_\infty&\le C\|\nabla \bu\|_\infty \label{cotainfty1},
\end{align}
where $C$ does not depend on $\nu$.

The method that will be studied for the approximation of the solution of
the Navier--Stokes  equations
(\ref{NS}) is obtained by adding to the Galerkin equations a control of the divergence
constraint (grad-div stabilization). More precisely,
the following grad-div method will be considered: Find $(\bu_h,p_h):(0,T]\rightarrow V_h\times Q_h$ such that
\begin{equation}\label{eq:gal}
\begin{split}
(\partial_t\bu_h,\bv_h)+\nu(\nabla \bu_h,\nabla \bv_h)&+b(\bu_h,\bu_h,\bv_h)-(p_h,\nabla \cdot \bv_h,)\\
&+(\nabla\cdot \bu_h,q_h)+\mu(\nabla \cdot \bu_h,\nabla \cdot \bv_h) =(\boldsymbol f,\bv_h),
\end{split}
\end{equation}
for all $(\bv_h,q_h)\in V_h\times Q_h$,
with $\bu_h(0)=I_h \bu_0$.
Here, and in the rest of the paper,
$$
b(\bu,\bv,\bw)=(B(\bu,\bv),\bw)\quad \forall\bu,\bv,\bw\in H_0^1(\Omega)^d,
$$
where,
$$
B(\bu,\bv)=(\bu\cdot \nabla )\bv+\frac{1}{2}(\nabla  \cdot\bu)\bv\quad \forall \bu,\bv\in H_0^1(\Omega)^d
$$
Notice the well-known property
\begin{equation}
\label{eq:skew}
b(\bu,\bv,\bw)= - b(\bu,\bw,\bv)\quad \forall\bu,\bv,\bw\in V,
\end{equation}
such that, in particular, $b(\bu,\bw,\bw) = 0$ for all $\bu, \bw\in V $.

\section{The regular continuous-in-time case}
\label{Se:smooth}

In this section, error bounds for the continuous-in-time discretization are derived for
the case in which regularity up to time $t=0$ is assumed. Some of the
lemmas are written in such a way that can also be applied in Section~\ref{sec:ana_wo_comp}
for the analysis of the situation without compatibility assumptions.

\subsection{Error bound for the velocity}

Using test functions in $V_h^{\rm div}$ and applying  definitions \eqref{eq:A_h}--\eqref{eq:Pi_h},
one finds that  (\ref{eq:gal}) implies that $\bu_h$ satisfies
\begin{equation}
\label{eq:galp}
\partial_t\bu_h+\nu A_h \bu_h+B_h(\bu_h,\bu_h)+\mu C_h \bu_h=
\Pi_h^{\rm div}{\boldsymbol f},
\end{equation}
where
$$
B_h(\bu,\bv)=\Pi_h^{\rm div}B(\bu,\bv),\quad \bu,\bv\in H_0^1(\Omega)^d,
$$
and $C_h$ is defined in~(\ref{eq:C_h}).
Notice that $\Pi_h^{\rm div}$ can be extended from $L^2(\Omega)^d$ to~$H^{-1}(\Omega)$ in such a way that
$
B_h(\bu,\bv)$
 is well defined for~$\bu,\bv\in H^1_0(\Omega)^d$.

The following two lemmas will be used in Sections~\ref{Se:smooth}  and~\ref{sec:ana_wo_comp}.

\begin{lema}\label{le:stab}
Let $\bw_h\ :\ [0,T]\rightarrow V_h^{\rm div}$ be an arbitrary function piecewise differentiable with respect time. Let $\bu_h$ be the mixed finite
 element approximation to the velocity defined in (\ref{eq:galp}). Define the truncation errors
 $\bt_{1,h}\ :\ [0,T]\rightarrow V_h^{\rm div}$
and $t_2\ : \ [0,T]\rightarrow L^2(\Omega)$ such that the following equation is satisfied
\begin{equation}
\label{eq:law}
\partial_t\bw_h+\nu A_h \bw_h+B_h(\bw_h,\bw_h)+\mu C_h \bw_h=\Pi_h^{\rm div}{\boldsymbol f} +\bt_{1,h}-D_h t_2,
\end{equation}
where $D_h$ has been defined in (\ref{eq:D_h}). Then, if the function
\begin{equation}\label{lag}
g(t)=1+2\|\nabla\bw_h(t)\|_\infty+\frac{\|\bw_h(t)\|_\infty^2}{2\mu}
\end{equation}
is integrable in $(0,T)$, i.e., $\nabla\bw_h \in L^1(0,T;L^\infty)$ and
$\bw_h \in L^2(0,T;L^\infty)$,
the error $\be_h=\bu_h-\bw_h$ can be bounded as follows
\begin{align*}
\|\be_h(t)\|_0^2+\int_0^t e^{K(t,s)}&\left(2\nu\|\nabla \be_h(s)\|_0^2+{\mu}\|\nabla \cdot \be_h(s)\|_0^2\right)~ds\\
&\le
e^{K(t,0)}\|\be_h(0)\|_0^2+\int_0^t e^{K(t,s)}
\left(\|\bt_{1,h}\|_0^2 +\frac{2}{\mu}\|t_2\|_0^2\right)~ds,
\end{align*}
where
$$
K(t,s)=\int_{s}^t \left(1+2\|\nabla\bw_h\|_\infty + \frac{
\| \bw_h\|_\infty^2}{2\mu}\right)\,dr.
$$
\end{lema}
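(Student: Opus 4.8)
The plan is to derive an energy estimate for the error equation obtained by subtracting \eqref{eq:law} from \eqref{eq:galp}, testing against $\be_h$, and controlling the nonlinear term by the logarithmic factor $g(t)$ built into $K(t,s)$. First I would write the error equation: since both $\bu_h$ and $\bw_h$ lie in $V_h^{\rm div}$ (and $\be_h = \bu_h - \bw_h \in V_h^{\rm div}$), subtracting gives
$$
\partial_t\be_h+\nu A_h \be_h+B_h(\bu_h,\bu_h)-B_h(\bw_h,\bw_h)+\mu C_h \be_h=-\bt_{1,h}+D_h t_2.
$$
Taking the $L^2$ inner product with $\be_h$ and using the defining identities \eqref{eq:A_h}, \eqref{eq:C_h}, \eqref{eq:D_h} together with the self-adjointness of $\Pi_h^{\rm div}$ on $V_h^{\rm div}$, I would obtain
$$
\frac12\frac{d}{dt}\|\be_h\|_0^2+\nu\|\nabla \be_h\|_0^2+\mu\|\nabla\cdot\be_h\|_0^2
= -\big(B(\bu_h,\bu_h)-B(\bw_h,\bw_h),\be_h\big)-(\bt_{1,h},\be_h)-(t_2,\nabla\cdot\be_h).
$$
The right-hand sign on the $D_h t_2$ term follows from \eqref{eq:D_h}, which converts $(D_h t_2,\be_h)$ into $-(t_2,\nabla\cdot\be_h)$.

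The key step is the nonlinear term. Using the bilinearity-style splitting $B(\bu_h,\bu_h)-B(\bw_h,\bw_h)=B(\be_h,\bu_h)+B(\bw_h,\be_h)$ and the skew-symmetry \eqref{eq:skew}, the piece $b(\bw_h,\be_h,\be_h)$ vanishes, leaving only $b(\be_h,\bw_h,\be_h)=(B(\be_h,\bw_h),\be_h)$. Writing this out as $((\be_h\cdot\nabla)\bw_h,\be_h)+\tfrac12((\nabla\cdot\be_h)\bw_h,\be_h)$, I would bound the first summand by $\|\nabla\bw_h\|_\infty\|\be_h\|_0^2$ and the second, using Cauchy--Schwarz and Young, by $\tfrac{\mu}{2}\|\nabla\cdot\be_h\|_0^2+\tfrac{1}{2\mu}\|\bw_h\|_\infty^2\|\be_h\|_0^2$. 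The $\tfrac{\mu}{2}\|\nabla\cdot\be_h\|_0^2$ is absorbed into the dissipation on the left, halving the grad-div coefficient to $\tfrac{\mu}{2}$; this is exactly why the stated bound carries $\mu\|\nabla\cdot\be_h\|_0^2$ rather than $2\mu$. The remaining coefficients $2\|\nabla\bw_h\|_\infty+\tfrac{1}{2\mu}\|\bw_h\|_\infty^2$ are precisely the non-constant part of $g(t)$. The linear terms are handled simply: $|(\bt_{1,h},\be_h)|\le\tfrac12\|\bt_{1,h}\|_0^2+\tfrac12\|\be_h\|_0^2$, contributing the $+1$ in $g(t)$ after also accounting for $|(t_2,\nabla\cdot\be_h)|\le\tfrac{\mu}{2}\|\nabla\cdot\be_h\|_0^2+\tfrac{1}{2\mu}\|t_2\|_0^2$, whose dissipative part is again absorbed on the left.

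Collecting everything, I would arrive at the differential inequality
$$
\frac{d}{dt}\|\be_h\|_0^2+2\nu\|\nabla\be_h\|_0^2+\mu\|\nabla\cdot\be_h\|_0^2
\le g(t)\|\be_h\|_0^2+\|\bt_{1,h}\|_0^2+\frac{2}{\mu}\|t_2\|_0^2,
$$
where $g$ is as in \eqref{lag}. The final step is an integrating-factor (Gronwall) argument: multiplying through by $e^{K(t,s)}$ with $K'(s)=-g(s)$ and integrating from $0$ to $t$ yields the claimed estimate, the exponential $e^{K(t,s)}$ appearing inside both integrals and in front of $\|\be_h(0)\|_0^2$. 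The integrability hypotheses $\nabla\bw_h\in L^1(0,T;L^\infty)$ and $\bw_h\in L^2(0,T;L^\infty)$ guarantee $g\in L^1(0,T)$, so $K(t,s)$ is finite. The main obstacle is the careful Young-inequality bookkeeping of the nonlinear and $t_2$ terms so that the grad-div dissipation is split consistently and exactly half of it is left on the left-hand side; everything else is routine. A minor technical point is justifying the use of $B_h=\Pi_h^{\rm div}B$ together with $\be_h\in V_h^{\rm div}$ so that the projections drop out, which follows from \eqref{eq:Pi_h}.
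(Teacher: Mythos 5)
Your strategy coincides with the paper's: subtract the two discrete equations, test with $\be_h\in V_h^{\rm div}$, use skew-symmetry of $b$ to reduce the nonlinearity to $b(\be_h,\bw_h,\be_h)$, bound it via H\"older and Young so that the Gronwall coefficient is $g$, and close with an integrating factor. However, two of your concrete steps fail as written. First, with your splitting $B(\bu_h,\bu_h)-B(\bw_h,\bw_h)=B(\be_h,\bu_h)+B(\bw_h,\be_h)$, skew-symmetry kills $b(\bw_h,\be_h,\be_h)$ but the surviving term is $b(\be_h,\bu_h,\be_h)$, not $b(\be_h,\bw_h,\be_h)$ as you claim; you need one further use of \eqref{eq:skew}, namely $b(\be_h,\be_h,\be_h)=0$, to replace $\bu_h$ by $\bw_h$ in the middle slot (the paper instead splits as $-b(\be_h,\bw_h,\be_h)-b(\bu_h,\be_h,\be_h)$, which lands on the right term directly). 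This is not cosmetic: the whole point of the lemma is that the Gronwall factor involves only the comparison function $\bw_h$, whose $L^\infty$ norms are assumed integrable, and not the discrete solution $\bu_h$, about which no such information is available.

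Second, and more seriously, your Young constants are inconsistent with the differential inequality you then assert. You absorb $\frac{\mu}{2}\|\nabla\cdot\be_h\|_0^2$ from the nonlinear term and another $\frac{\mu}{2}\|\nabla\cdot\be_h\|_0^2$ from the $t_2$ term; together these exhaust the entire $\mu\|\nabla\cdot\be_h\|_0^2$ on the left, so no grad-div dissipation survives, whereas your displayed inequality (and the lemma) retains $\mu\|\nabla\cdot\be_h\|_0^2$ after doubling. Likewise your coefficient $\frac{1}{2\mu}\|\bw_h\|_\infty^2\|\be_h\|_0^2$ doubles to $\frac{1}{\mu}\|\bw_h\|_\infty^2\|\be_h\|_0^2$, which exceeds the term $\frac{1}{2\mu}\|\bw_h\|_\infty^2$ allowed in $g$. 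The paper takes $\frac{\mu}{4}\|\nabla\cdot\be_h\|_0^2+\frac{1}{4\mu}\|\bw_h\|_\infty^2\|\be_h\|_0^2$ in the nonlinear bound and $\frac{\mu}{4}\|\nabla\cdot\be_h\|_0^2+\frac{1}{\mu}\|t_2\|_0^2$ for the $t_2$ term; then $\frac{\mu}{2}\|\nabla\cdot\be_h\|_0^2$ survives on the left and, after multiplying by two, one obtains exactly $\mu\|\nabla\cdot\be_h\|_0^2$, the factor $g$, and $\frac{2}{\mu}\|t_2\|_0^2$. The retained grad-div dissipation is not a luxury: it is part of the lemma's conclusion and is used downstream (for instance in Theorem~\ref{th:velocity_smooth} and in the pressure analysis), so with your constants the proof does not deliver the stated result, although the repair is a one-line reallocation of the Young weights.
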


\begin{proof} Subtracting~(\ref{eq:galp}) from~(\ref{eq:law}), taking the
inner product with $\be_h \in V_h^{\rm div}$, and performing some standard computations yields
\begin{equation}\label{eq:err1aa}
\begin{split}
\frac{1}{2}\frac{d}{dt}\|\be_h\|_0^2+\nu\|\nabla \be_h\|_0^2+\mu\|\nabla \cdot \be_h\|_0^2&+b(\bw_h,\bw_h,\be_h)-b(\bu_h,\bu_h,\be_h)\\
&=(\bt_{1,h},\be_h)+(t_2,\nabla \cdot\be_h).
\end{split}
\end{equation}
Observe that
\begin{equation}
\label{eq:un_nl1}
b(\bw_h,\bw_h,\be_h)-b(\bu_h,\bu_h,\be_h)=
-b(\be_h,\bw_h,\be_h)-b(\bu_h,\be_h,\be_h)=-b(\be_h,\bw_h,\be_h),
\end{equation}
where in the last step it was used that, due to (\ref{eq:skew}),
$b(\bu_h,\be_h,\be_h)=0$. Applying H\"older's inequality one finds
\begin{align}
|b(\be_h,\bw_h,\be_h)|&\le \|\nabla \bw_h\|_\infty\|\be_h\|_0^2+\frac{1}{2}\|\nabla \cdot \be_h\|_0\|\bw_h\|_\infty\|\|\be_h\|_0\nonumber\\
&\le
\|\nabla \bw_h\|_\infty\|\be_h\|_0^2+\frac{\mu}{4}\|\nabla \cdot \be_h\|_0^2+\frac{\|\bw_h\|_\infty^2}{4\mu}\|\be_h\|_0^2.
\label{eq:un_nl2}
\end{align}
Thus, from~\eqref{eq:err1aa}, using the Cauchy--Schwarz and Young's inequalities,
taking into account the definition of function $g$ in (\ref{lag}), and rearranging terms,
it follows that
\begin{equation*}%\label{eq:err2aa}
\frac{d}{dt}\|\be_h\|_0^2+2\nu\|\nabla \be_h\|_0^2+\mu\|\nabla \cdot \be_h\|_0^2\le g\|\be_h\|_0^2+\|\bt_{1,h}\|_0^2+ \frac{2}{\mu}\|t_2\|_0^2.
\end{equation*}
Multiplying by the integrating factor $\exp(-K(t,0))$ and integrating in time, the result follows
in a standard way.
\end{proof}

The following lemma will be used in the proof of the main results of the paper.

\begin{lema}\label{le:nonlin} There exists a positive constant $C$ such that
for any $\bw_h\in V_h$ and $\bv\in V\cap H^2(\Omega)^d$ the following bound holds
$$
%\label{eq:cota_tau2_aux}
\left\| B(\bw_h,\bw_h)-B(\bv,\bv)\right\|_0
 \le C\left( \|\bw_h\|_\infty
+\|\nabla \cdot \bw_h\|_{L^{2d/(d-1)}(\Omega)}+\left\|\bv\right\|_2\right)
\|\bw_h-\bv\|_{1}.
$$
\end{lema}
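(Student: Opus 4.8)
The plan is to exploit the bilinear structure of $B$ and linearize the difference around the error $\be := \bw_h - \bv$. Recalling $B(\bu,\bv)=(\bu\cdot\nabla)\bv + \frac12(\nabla\cdot\bu)\bv$, I would add and subtract the mixed terms to write
\begin{align*}
B(\bw_h,\bw_h)-B(\bv,\bv)
&= (\bw_h\cdot\nabla)\be + (\be\cdot\nabla)\bv \\
&\quad + \tfrac12(\nabla\cdot\bw_h)\,\be + \tfrac12(\nabla\cdot\be)\,\bv ,
\end{align*}
so that each of the four summands is linear in $\be$. The whole proof then reduces to estimating these four terms in the $L^2(\Omega)$ norm, distributing the three factors $\|\bw_h\|_\infty$, $\|\nabla\cdot\bw_h\|_{L^{2d/(d-1)}(\Omega)}$, and $\|\bv\|_2$ among them.

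For the two convective terms I would use H\"older's inequality together with the Sobolev embedding \eqref{sob1}. First, $\|(\bw_h\cdot\nabla)\be\|_0\le\|\bw_h\|_\infty\|\nabla\be\|_0\le\|\bw_h\|_\infty\|\be\|_1$, which supplies the $\|\bw_h\|_\infty$ factor directly. For $(\be\cdot\nabla)\bv$ I would split in $L^4\times L^4$, bounding $\|(\be\cdot\nabla)\bv\|_0\le\|\be\|_{L^4(\Omega)}\|\nabla\bv\|_{L^4(\Omega)}\le C\|\be\|_1\|\bv\|_2$, where the first embedding is $H^1\hookrightarrow L^4$ and the second follows from $\nabla\bv\in H^1\hookrightarrow L^4$; both hold for $d\in\{2,3\}$.

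The divergence-symmetrization terms are handled analogously, and here the precise exponent $2d/(d-1)$ in the statement becomes essential. For $\tfrac12(\nabla\cdot\bw_h)\be$ I would apply H\"older with the conjugate pair $\bigl(2d/(d-1),\,2d\bigr)$, obtaining a bound by $C\|\nabla\cdot\bw_h\|_{L^{2d/(d-1)}(\Omega)}\|\be\|_{L^{2d}(\Omega)}$; the point is that $2d$ is exactly the Sobolev-critical exponent for $H^1$ in dimension $d=3$ (and harmless for $d=2$), so $\|\be\|_{L^{2d}(\Omega)}\le C\|\be\|_1$. The last term $\tfrac12(\nabla\cdot\be)\bv$ I would bound by $\tfrac12\|\bv\|_\infty\|\nabla\cdot\be\|_0\le C\|\bv\|_2\|\be\|_1$, using $H^2\hookrightarrow L^\infty$ for $d\le 3$. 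Collecting the four estimates and recalling $\be=\bw_h-\bv$ yields the claim.

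The routine part is this chain of H\"older and Sobolev inequalities; the only genuinely delicate point, and what I expect to be the crux, is matching the H\"older exponents to the \emph{admissible} embeddings. In particular the exponent $2d/(d-1)$ is forced: its conjugate for the $L^2$ product is $2d$, which is precisely the critical exponent for $H^1\hookrightarrow L^{2d}$ in three dimensions, so this estimate sits exactly at the endpoint and relies essentially on $d\le 3$. This also explains why the middle factor must be measured in that particular Lebesgue norm rather than, say, in $L^2$.
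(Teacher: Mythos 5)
Your proof is correct and follows essentially the same route as the paper: your four-term splitting is exactly the expansion of the paper's decomposition $B(\bw_h,\bw_h)-B(\bv,\bv)=B(\bw_h,\bw_h-\bv)+B(\bw_h-\bv,\bv)$, and the subsequent H\"older/Sobolev estimates coincide, including the endpoint use of $H^1\hookrightarrow L^{2d}(\Omega)$ against the $L^{2d/(d-1)}(\Omega)$ factor. The only cosmetic difference is that for the term $(\bw_h-\bv)\cdot\nabla\bv$ you use the $L^4\times L^4$ pairing where the paper uses $L^{2d}\times L^{2d/(d-1)}$; both are admissible for $d\in\{2,3\}$.
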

\begin{proof}
Since $V_h  \subset L^\infty(\Omega)$ and by the well known Sobolev
embedding  $H^2(\Omega) \subset L^\infty(\Omega)$ (see e.g., \cite{Adams}),  it follows that
$ B(\bw_h,\bw_h), B(\bv,\bv) \in L^2(\Omega)$. Then, the application of the H\"older
inequality yields
\begin{align*}
\| B(\bw_h,\bw_h)&-B(\bv,\bv)\|_0=\|B(\bw_h,\bw_h-\bv)+
B(\bw_h-\bv,\bv)\|_0\\
&\le \|\bw_h\|_\infty
\|\bw_h-\bv\|_1+\frac{1}{2}\|\nabla \cdot \bw_h\|_{L^{2d/(d-1)}(\Omega)}\|\bw_h-\bv\|_{L^{2d}(\Omega)}\\
&+\|\bw_h-\bv\|_{L^{2d}(\Omega)}\|\nabla \bv\|_{L^{2d/(d-1)}(\Omega)}+\frac{1}{2}\|\nabla \cdot (\bw_h-\bv)\|_0\|\bv\|_\infty.
 %\\&+\frac{1}{2}\|\nabla \cdot(\bv_h-\bw)\|_0\|\bw\|_\infty,\nonumber
\end{align*}
The statement of the lemma follows from \eqref{sob1}.
\end{proof}

The proof of the error estimate is based on the comparison of the
Galerkin approximation to the velocity $\bu_h$ in \eqref{eq:gal} with
the approximation ${\bs}_h$ defined at the end of Section~\ref{sect:prelim}. The
pair $({\bs}_h,l_h)\in V_h\times Q_h$ solves
\begin{equation}\label{eq:s_h}
\begin{split}
\nu(\nabla {\bs}_h,\nabla  \bv_h)-(l_h,&\nabla \cdot \bv_h)-(\nabla \cdot {\bs}_h,q_h)\\
 &=({\boldsymbol f},\bv_h)-(\partial_t\bu,\bv_h)
 -b(\bu,\bu,\bv_h)+(p,\nabla \cdot \bv_h).
\end{split}
\end{equation}
Adding and subtracting terms gives
\begin{equation*}
\begin{split}
\partial_t\bs_h+\nu A_h {\bs}_h+B_h({\bs}_h,{\bs}_h)&+\mu C_h {\bs}_h=
\Pi_h^{\rm div}{\boldsymbol f}-\Pi_h^{\rm div}(\partial_t\bu-\partial_t\bs_h)\\
& -\left(B_h(\bu,\bu)-B_h({\bs}_h,{\bs}_h)\right)
-D_h (p-\pi_h)+\mu C_h ({\bs}_h).
\end{split}
\end{equation*}
Taking into account (\ref{eq:C_h}) and $\nabla \cdot \bu=0$, one can see that
Lemma~\ref{le:stab} can be applied  with $\bw_h={\bs}_h$,
$\bt_{1,h}=\Pi_h^{\rm div}(\boldsymbol\tau_1+\boldsymbol\tau_2)$, and $t_2=\tau_3+\tau_4$,
where
\begin{equation}\label{eq:tau3}
\begin{split}
\boldsymbol\tau_1&=\partial_t\bu-\partial_t\bs_h,\quad
\boldsymbol\tau_2=B(\bu,\bu)-B({\bs}_h,{\bs}_h),\\
\tau_3&=p-\pi_h,\quad\quad\tau_4=\mu(\nabla \cdot({\bs}_h-\bu)).
\end{split}
\end{equation}

Let $\bu$ satisfy the hypothesis in Theorem~\ref{th:velocity_smooth}
below.
In order to apply Lemma~\ref{le:stab}, the integrability in $(0,T)$ of the function $g$ defined
in (\ref{lag}), with $\bw_h=\bs_h$, has to be proved. To this end, it will be shown
that the two terms
$\| {\bs_h}\|_\infty^2$ and $ \|\nabla {\bs}_h\|_\infty$
are bounded by an integrable function in $(0,T)$.
For the latter, one can simply apply (\ref{cotainfty1}). For the former term,
one first observes that from the assumed regularity of~$\bu$ it follows
that $\bu$ is continuous and, hence, $\|I_h(\bu)\|_\infty\le C  \|\bu\|_\infty$ for some $C>0$. Then, one can write
$$
\|{\bs}_h\|_\infty\le \|{\bs}_h-I_h(\bu)\|_\infty+\|I_h(\bu)\|_\infty
\le C_{\rm inv}h^{-{d}/2}\|{\bs}_h-I_h(\bu)\|_0+\|\bu\|_\infty,
$$
where in
the last inequality inverse inequality (\ref{inv}) has been applied.
Applying~(\ref{cota_inter}), (\ref{eq:cotanewpro}), and (\ref{sob1}), one gets
\begin{equation}\label{lasinfinito}
\|{\bs}_h\|_\infty\le C  \|\bu\|_2,\quad  \|\nabla {\bs}_h\|_\infty\le C
\|\nabla \bu\|_\infty,
%\le C\| \bu\|_3,
\end{equation}
where the constants are independent of $\nu$.

Thus, by applying Lemma~\ref{le:stab} with $\be_h=\bu_h-\bs_h$,
one obtains
\begin{equation}\label{eq:gron_aux0}
\begin{split}
\|\be_h(t)\|_0^2&+\int_0^t e^{K(t,s)}\left(2\nu\|\nabla \be_h(s)\|_0^2+{\mu}\|\nabla \cdot \be_h(s)\|_0^2\right)~ds\\
&\le
 e^{K(t,0)}\|\be_h(0)\|_0^2+\int_0^t e^{K(t,s)}\left(\|\boldsymbol\tau_1
+\boldsymbol\tau_2\|_0^2+\frac2{\mu}\|\tau_3+\tau_4\|_0^2\right)~ds.
\end{split}
\end{equation}
From (\ref{eq:pressurel2}) and (\ref{eq:cotanewpro})  (see also Remark~\ref{rem:estimate_partial_t})
one gets
\begin{equation}
\label{eq:cota_tau3_tau4}
\left\|\tau_3+\tau_4\right\|_0^2\le Ch^{2k}(\|p\|_{k}^2+
\mu^2\|\bu\|_{k+1}^2),
\end{equation}
and
\begin{equation}
\label{eq:cota_tau1_tau2}
\left\|\boldsymbol\tau_1+\boldsymbol\tau_2\right\|_0^2\le Ch^{2k}\|\partial_t\bu\|_{k}^2+
2\left\|\boldsymbol\tau_2\right\|_0^2.
\end{equation}
For $\boldsymbol\tau_2$, the application of  Lemma~\ref{le:nonlin} gives
\begin{equation}
\label{eq:cota_tau2_aux}
\|\boldsymbol\tau_2\|_0\le C\left( \|{\bs}_h\|_\infty
+\|\nabla \cdot {\bs}_h\|_{L^{2d/(d-1)}(\Omega)}+\left\|\bu\right\|_2\right)
\|\bu- {\bs}_h\|_{1}.
\end{equation}
To bound $\|\nabla \cdot {\bs}_h\|_{L^{2d/(d-1)}(\Omega)}$, one finds with the inverse inequality
(\ref{inv}) that
$$\|\nabla \cdot {\bs}_h\|_{L^{2d/(d-1)}(\Omega)} \le
Ch^{-1/2}\|\nabla \cdot {\bs}_h\|_0,
$$
and
with  \eqref{eq:cotanewpro} it follows that
\begin{equation}\label{eq:cota_div_2d}
\|\nabla \cdot {\bs}_h\|_{L^{2d/(d-1)}(\Omega)} \le Ch^{-1/2}\|\nabla \cdot (\bu - {\bs}_h)\|_0
\le C h^{1/2} \left\|\bu\right\|_2.
\end{equation}
\iffalse
so that by
writing
$\nabla \cdot {\bs}_h = \nabla \cdot( {\bs}_h - I_h(\bu))
+\nabla \cdot(  I_h(\bu)-\bu)$ and applying,
(\ref{cota_inter}) and~(\ref{eq:cotanewpro}) we have
\begin{equation}
\|\nabla \cdot {\bs}_h\|_{L^{2d/(d-1)}} \le
Ch^{-1/2}\left( \left\| {\bs}_h -I_h\bu \right\|_1+
\left\|  I_h\bu -\bu \right\|_1\right) \le Ch^{1/2}\left\|\bu\right\|_2.
\label{eq:cota_div_2d}
\end{equation}
\fi
Altogether, from~(\ref{eq:cota_tau2_aux}), using also~(\ref{lasinfinito}) and
that $h$ is bounded (at least from the diameter of $\Omega$), one obtains
\begin{equation}
\label{eq:auxtau2}
\|\boldsymbol\tau_2\|_0\le C\left\|\bu\right\|_2
\|\bu-{\bs}_h\|_{1}\le C h^k\left\|\bu\right\|_2\left\|\bu\right\|_{k+1}.
\end{equation}

In view of~(\ref{lasinfinito}), one has
$1\le \exp(K(t,s))\le  C\exp(L(T))$
with
\begin{equation} L(T)=\int_0^{T} \left(1+2\left\|\nabla \bu(s)\right\|_\infty+
\frac{\left\|\bu(s)\right\|_2^2}{2\mu}\right)\,ds,
\label{eq:LofT}
\end{equation}
where $C$ is independent of~$\nu$.
From~(\ref{eq:gron_aux0}), (\ref{eq:cota_tau3_tau4})--(\ref{eq:auxtau2})
and taking into account that $\|\be_h(0)\|\le C h^{k}\|\bu\|_{k}$,
one derives the following error estimate for the velocity.

\begin{Theorem}\label{th:velocity_smooth} For $T>0$ let us assume for the solution $(\bu,p)$ of \eqref{eq:NSweak} that
$$\bu\in L^2(0,T,H^{k+1}(\Omega))\cap L^2(0,T,W^{1,\infty}(\Omega))\cap L^\infty(0,T,H^2(\Omega)),$$
$\bu(0)\in H^{\max\{2,k\}}(\Omega)$, $\partial_t\bu\in L^2(0,T,H^{k}(\Omega))$,
and $p\in L^2(0,T,H^{k}(\Omega)/{\mathbb R})$ with $k\ge 1$.
Then there exists a positive constant $C$ depending on
\begin{equation}\label{eq:velocity_smooth}
\left\|\bu(0)\right\|_{k}^2+\int_0^T\left(\frac{\left\|p(t)\right\|_{H^k/{\mathbb R}}^2}{\mu}
+
 \|\partial_t\bu(t)\|_{k}^2+(\mu+\|\bu(t)\|_2^2)\| \bu(t)\|_{k+1}^2\right)\,dt,
\end{equation}
but not directly on inverse powers of $\nu$, such that
the following bound holds for $\be_h=\bu_h-{\bs}_h$
and~$t\in[0,T]$
\begin{equation}\label{eq:error_bound_velo}
\|\be_h(t)\|_0^2+\int_0^t (\nu \|\nabla \be_h(s)\|_0^2+\mu\|\nabla \cdot \be_h(s)\|_0^2)~ds
\le C\exp(L(T))h^{2k},
\end{equation}
where $L(T)$ is defined in~(\ref{eq:LofT}).
\end{Theorem}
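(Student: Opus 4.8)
The plan is to use the auxiliary projection ${\bs}_h$ (together with its companion pressure $l_h$) as the comparison function and to reduce everything to a single application of Lemma~\ref{le:stab}. First I would observe that, starting from the defining relation~(\ref{eq:s_h}) and after adding and subtracting the grad-div and convective contributions, ${\bs}_h$ solves an equation of exactly the form~(\ref{eq:law}) with $\bw_h = {\bs}_h$, forcing terms $\bt_{1,h} = \Pi_h^{\rm div}(\boldsymbol\tau_1 + \boldsymbol\tau_2)$ and $t_2 = \tau_3 + \tau_4$, where the four residuals are those listed in~(\ref{eq:tau3}). The whole point of choosing ${\bs}_h$ rather than the Stokes projection is that its velocity error obeys the $\nu$-uniform bound~(\ref{eq:cotanewpro}) and its gradient obeys~(\ref{cotainfty1}); this is precisely what will keep negative powers of $\nu$ out of the final constant.

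Before invoking Lemma~\ref{le:stab} I must verify that the weight $g$ in~(\ref{lag}), evaluated at $\bw_h = {\bs}_h$, is integrable on $(0,T)$, which reduces to controlling $\|{\bs}_h\|_\infty^2$ and $\|\nabla{\bs}_h\|_\infty$. For the gradient, (\ref{cotainfty1}) gives directly $\|\nabla{\bs}_h\|_\infty \le C\|\nabla\bu\|_\infty$. For $\|{\bs}_h\|_\infty$ I would split ${\bs}_h = ({\bs}_h - I_h\bu) + I_h\bu$, estimate the interpolant using continuity of $\bu$, and absorb the discrete part through the inverse inequality~(\ref{inv}) together with~(\ref{cota_inter}), (\ref{eq:cotanewpro}) and~(\ref{sob1}); the outcome is~(\ref{lasinfinito}), i.e.\ $\|{\bs}_h\|_\infty \le C\|\bu\|_2$ with $C$ independent of $\nu$. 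The regularity hypotheses $\bu \in L^\infty(0,T;H^2)$ and $\bu \in L^2(0,T;W^{1,\infty})$ then make $g$ integrable and exhibit the exponential weight $L(T)$ of~(\ref{eq:LofT}) as a $\nu$-independent quantity.

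The main work is then to estimate the right-hand side of the Gronwall inequality~(\ref{eq:gron_aux0}) produced by Lemma~\ref{le:stab}. The linear residuals are routine: $\tau_3 = p - \pi_h$ and $\tau_4 = \mu\nabla\cdot({\bs}_h - \bu)$ are handled by the pressure estimate~(\ref{eq:pressurel2}) and by~(\ref{eq:cotanewpro}), yielding~(\ref{eq:cota_tau3_tau4}), while $\boldsymbol\tau_1 = \partial_t(\bu - {\bs}_h)$ is controlled by the time-differentiated form of~(\ref{eq:cotanewpro}) recorded in Remark~\ref{rem:estimate_partial_t}. The genuinely nonlinear residual $\boldsymbol\tau_2 = B(\bu,\bu) - B({\bs}_h,{\bs}_h)$ is where Lemma~\ref{le:nonlin} enters, giving~(\ref{eq:cota_tau2_aux}); the only delicate factor there is $\|\nabla\cdot{\bs}_h\|_{L^{2d/(d-1)}}$, which I would bound by first passing to $L^2$ via~(\ref{inv}) at the cost of $h^{-1/2}$ and then using $\nabla\cdot\bu = 0$ with~(\ref{eq:cotanewpro}), so that the two half-powers of $h$ cancel and~(\ref{eq:auxtau2}), $\|\boldsymbol\tau_2\|_0 \le Ch^k\|\bu\|_2\|\bu\|_{k+1}$, follows $\nu$-uniformly.

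Finally I would collect the pieces: bound $1 \le \exp(K(t,s)) \le C\exp(L(T))$ by the $\nu$-independent weight, insert the residual estimates~(\ref{eq:cota_tau3_tau4})--(\ref{eq:auxtau2}) into~(\ref{eq:gron_aux0}), and use $\|\be_h(0)\|_0 \le Ch^k\|\bu(0)\|_k$, which comes from $\bu_h(0) = I_h\bu_0$ together with~(\ref{cota_inter}) and~(\ref{eq:cotanewpro}); this delivers~(\ref{eq:error_bound_velo}) with the constant governed by the integral quantity~(\ref{eq:velocity_smooth}). The recurring obstacle is the demand for a constant free of negative powers of $\nu$: it is this that forces the use of the special projection ${\bs}_h$ in place of the Stokes projection (so neither the velocity nor its gradient carries a factor $\nu^{-1}$), relies on the skew-symmetry~(\ref{eq:skew}) to annihilate $b(\bu_h,\be_h,\be_h)$ inside Lemma~\ref{le:stab}, and uses the grad-div term to absorb the $\|\nabla\cdot\be_h\|_0$ contribution generated by the nonlinearity in~(\ref{eq:un_nl2}).
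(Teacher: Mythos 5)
Your proposal is correct and follows essentially the same path as the paper's own proof: comparison with the special projection ${\bs}_h$ via Lemma~\ref{le:stab}, the same residual decomposition~(\ref{eq:tau3}), the same $L^\infty$ bounds~(\ref{lasinfinito}) to verify integrability of $g$, the same treatment of $\|\nabla\cdot{\bs}_h\|_{L^{2d/(d-1)}(\Omega)}$ with cancelling half-powers of $h$, and the same initial-error and exponential-weight estimates. Nothing essential is missing or different.
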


\begin{remark}\label{re:H3}
Note that Theorem~\ref{th:velocity_smooth} is formulated for the most common choice of inf-sup
stable finite element spaces where the polynomial degree of the velocity space is larger by one than
the degree of the pressure space. In this situation,
 the constant $C$ in \eqref{eq:error_bound_velo} depends on $\mu^{-1}$ and on $\mu$, see \eqref{eq:velocity_smooth}.  Thus, the asymptotic optimal choice of the stabilization parameter is
$\mu \sim 1$, which is a well-known result for this situation.

For pairs of inf-sup stable spaces with the same polynomial degree, like the MINI element,
the same regularity with respect to the polynomial degree for velocity and pressure is usually assumed
and the estimates for
proving the error bound can be adapted accordingly. In particular, one gets instead of \eqref{eq:cota_tau3_tau4}
\begin{equation*}
\left\|\tau_3+\tau_4\right\|_0^2\le Ch^{2k}(h^2\|p\|_{k+1}^2+
\mu^2\|\bu\|_{k+1}^2),
\end{equation*}
such that equilibrating the two terms containing $\mu$ gives the choice $\mu \sim h$, which is known from the
literature \cite{JJLR14}.
However, also choosing $\mu\sim 1$ or $\mu \sim h^2$ leads for the MINI element to optimal error bounds with constants
independent of $\nu$. Altogether, there is some freedom for the choice of  $\mu$  and
choosing this parameter to be a constant is a valid option also for the MINI element.

\end{remark}

\begin{remark}\label{re:th2}
 By writing
 $$
 (\bu_h-\bu)=\be_h+({\bs}_h-\bu),
 $$
 applying the triangle inequality, Theorem~\ref{th:velocity_smooth}, and (\ref{eq:cotanewpro}), it follows that the bound (\ref{eq:error_bound_velo}) holds true changing $\be_h$ by $\bu_h-\bu$.
\end{remark}

\subsection{Error bound for the pressure}
\label{se:pressure}

The error bound for the pressure will be obtained now using the same arguments as used
in \cite{los_cuatro_oseen}.

Applying the inf-sup condition (\ref{LBB}), substituting in the numerator
\eqref{eq:gal} and \eqref{eq:s_h}, adding and subtracting terms,  and using
the Cauchy--Schwarz inequality, it follows that
\begin{align}\label{eq:pressure_aux}
\beta_0\|p_h-\pi_h\|_0&\le \nu\|\nabla \be_h\|_0+\|B(\bu_h,\bu_h)-B({\bs}_h,{\bs}_h)\|_{-1}+\|\partial_t\be_h\|_{-1}\nonumber\\
&+\mu\|\nabla\cdot \be_h\|_0 +\|\boldsymbol\tau_1\|_{-1}
\\
&+\|\boldsymbol\tau_2\|_{-1}+\|\tau_3\|_0+\|\tau_4\|_0+\|l_h\|_0.\nonumber
\end{align}

Note that, due to \eqref{eq:error_bound_velo}, the presence of the  terms $\nu\| \nabla \be_h\|_0$
and $\mu\| \nabla\cdot\be_h\|$ on the right-hand side of \eqref{eq:pressure_aux}
limits the maximum convergence rate to $\mathcal O(h^k)$.
The same convergence rate is obtained for the term $\|\tau_4\|_0$, which
is estimated with \eqref{eq:cotanewpro}.

The fifth term
is bounded with (\ref{eq:cota_tau1_tau2})
\begin{equation*}
\|\boldsymbol\tau_1\|_{-1}\le
C\|\boldsymbol\tau_1\|_0\le Ch^k\left\|\partial_t\bu \right\|_{k},
%\label{eq:cota_tau1-1}
\end{equation*}
and the sixth term, using \eqref{eq:auxtau2}, by
\begin{equation}
\|\boldsymbol\tau_2\|_{-1}
\le C\|\boldsymbol\tau_2\|_0\le C h^k \|\bu\|_2
\left\|\bu \right\|_{k+1}.
\label{eq:auxB-1shu}
\end{equation}
For the second term on the right-hand side of (\ref{eq:pressure_aux}),
the skew-symmetry of $b(\cdot,\cdot,\cdot)$ gives
\begin{equation}\label{eq:auxb-1}
\begin{split}
\|B(\bu_h,\bu_h)-B({\bs}_h,{\bs}_h)\|_{-1}&=\sup_{\|\boldsymbol\phi\|_1=1}|b(\bu_h,\be_h,\boldsymbol\phi)+
b(\be_h,{\bs}_h,\boldsymbol\phi)|
\\
&=\sup_{\|\boldsymbol\phi\|_1=1}|b(\bu_h,\boldsymbol\phi,\be_h)+
b(\be_h,\boldsymbol\phi,{\bs}_h)|.
\end{split}
\end{equation}
Using now H\"older's inequality and the Sobolev embedding \eqref{sob1}, one finds the bound
\begin{align*}
|b(\bu_h,\boldsymbol\phi,\be_h)| &\le
\|\be_h\|_0\|\bu_h\|_\infty\|\boldsymbol\phi\|_1+\|\be_h\|_0\|\nabla \cdot \bu_h\|_{L^{2d/(d-1)}(\Omega)}\|\boldsymbol\phi\|_{L^{2d}(\Omega)}\nonumber\\
& \le C \left(\|\bu_h\|_\infty+\|\nabla \cdot\bu_h\|_{L^{2d/(d-1)}(\Omega)}\right)\|\be_h\|_0\|\boldsymbol\phi\|_{1}.
\end{align*}
For the second term on the right-hand of~\eqref{eq:auxb-1}, arguing similarly, one gets
$$
|b(\be_h,\boldsymbol\phi,{\bs}_h)| \le \|\be_h\|_0\| {\bs}_h\|_{\infty}\|\boldsymbol\phi\|_{1}
+ C \|\nabla \cdot \be_h\|_0\|{\bs}_h\|_{L^{2d/(d-1)}(\Omega)}\|\boldsymbol\phi\|_{1},
$$
such that
\begin{align}\label{eq:auxB-1}
\|B(\bu_h,\bu_h)&-B({\bs}_h,{\bs}_h)\|_{-1}\nonumber\\
&\le
 C\left(\|\bu_h\|_\infty+\|\nabla \cdot\bu_h\|_{L^{2d/(d-1)}(\Omega)}+\left\|{\bs}_h\right\|_\infty\right)\|\be_h\|_0 \\
 &\quad +
C\|{\bs}_h\|_{L^{2d/(d-1)}(\Omega)}\|\nabla \cdot \be_h\|_0.\nonumber
 \end{align}
Next, the terms between parentheses will be bounded.
Applying~\eqref{inv} and~\eqref{lasinfinito}, one finds
\begin{equation}\label{eq:pres_00}
\begin{split}
\|\bu_h\|_\infty\le \|\be_h\|_\infty+\|{\bs}_h\|_\infty&\le C h^{-d/2}\|\be_h\|_0+\|{\bs}_h\|_\infty\\
&\le C\left(h^{-d/2}\|\be_h\|_0+\|\bu\|_2\right).
\end{split}
\end{equation}
Recalling \eqref{inv} and \eqref{eq:cota_div_2d} yields
\begin{equation}\label{Ineedit}
\begin{split}
\|\nabla \cdot \bu_h\|_{L^{2d/(d-1)}(\Omega)}&\le \|\nabla \cdot \be_h\|_{L^{2d/(d-1)}(\Omega)}
+\|\nabla \cdot {\bs}_h\|_{L^{2d/(d-1)}(\Omega)}
\\&\le C h^{-1/2}\|\nabla \cdot \be_h\|_0+Ch^{1/2} \|\bu\|_2.
\end{split}
\end{equation}

\begin{remark}
The right-hand side of \eqref{eq:pres_00} is bounded  for $h\to 0$
always for $d=2$. It follows from \eqref{eq:error_bound_velo} that
for $d=3$ the term is bounded for $k\ge 2$. Note that most inf-sup stable pairs
of finite element spaces have velocity spaces which are at least of second order so that this is not a big restriction. On the other hand, one can deduce from (\ref{Ineedit}) and \eqref{eq:error_bound_velo} that the term $\int_0^t \|\nabla \cdot \bu_h(s)\|^2_{L^{2d/(d-1)}(\Omega)}~ds$ is bounded.
\end{remark}

With \eqref{eq:auxB-1} and using in addition \eqref{inv}, \eqref{lasinfinito}, and \eqref{eq:cota_div_2d}, one obtains
\begin{align}%\label{eq:auxB-12}
\|B(\bu_h,\bu_h)-B({\bs}_h,{\bs}_h)\|_{-1}
\le\ &
C\left\|\bu\right\|_2
 \left(\|\be_h\|_0+\|\nabla \cdot \be_h\|_0\right)\nonumber \\
 &{}+
C\left( h^{-d/2}\left\|\be_h\right\|_0+h^{-1/2}\|\nabla \cdot \be_h\|_0
\right)\|\be_h\|_0.
\label{eq:auxB-12}
\end{align}

Next, the third term on the right-hand side of (\ref{eq:pressure_aux})  will be bounded.
Arguing as in \cite{los_cuatro_oseen}, it will be shown first that
$\|\partial_t\be_h\|_{-1}$ can be estimated by bounding
$\|A_h^{-1/2} \partial_t\be_h\|_0$. From  \cite[Lemma 3.11]{AGN05} it is known that
\begin{equation}\label{new1}
\|\partial_t\be_h\|_{-1}\le C h\|\partial_t\be_h\|_{0}+C\|A^{-1/2}\Pi\partial_t\be_h\|_{0},
\end{equation}
where $\Pi$ is the Leray projector defined in Section~\ref{sect:prelim}.
Applying  \cite[(2.15)]{AGN05} one gets
\begin{equation}\label{new2}
\|A^{-1/2}\Pi\partial_t\be_h\|_{0}\le C h\|\partial_t\be_h\|_0+\|A_h^{-1/2}\partial_t\be_h\|_0,
\end{equation}
with $A_h$ defined in \eqref{eq:A_h}.
With (\ref{new1}), (\ref{new2}), the symmetry of $A_h$,
and the inverse inequality (\ref{inv}), one obtains
\begin{align*}%\label{eq:est_be_h}
\|\partial_t\be_h\|_{-1}&\le
C h\|\partial_t\be_h\|_{0}+C\|A_h^{-1/2}\partial_t\be_h\|_0 \nonumber \\
&= C h\|A_h^{1/2}A_h^{-1/2}\partial_t\be_h\|_0+C\|A_h^{-1/2}\partial_t\be_h\|_0\nonumber\\
&= C h \|\nabla (A_h^{-1/2} \partial_t\be_h)\|_0+C\|A_h^{-1/2}\partial_t\be_h\|_0 \nonumber\\
&\le
C\|A_h^{-1/2}\partial_t\be_h\|_0.
\end{align*}
 Taking into account that $\|A_h^{-1/2} \Pi_h^{\rm div} {\bf g}\|_0\le \|{\bf g}\|_{-1},$ for all ${\bf g}\in L^2(\Omega)^d$, see \cite[(2.16)]{AGN05},  and arguing as in \cite{los_cuatro_oseen}, the following
estimate for $\|A_h^{-1/2}\partial_t\be_h\|_0$ can be derived
\begin{equation}\label{eq:pres_01}
\begin{split}
\|A_h^{-1/2}\partial_t\be_h\|_0&\le \nu\|A_h^{1/2}\be_h\|_0+\|B(\bu_h,\bu_h)-B({\bs}_h,{\bs}_h)\|_{-1}
+C\mu\|\nabla \cdot \be_h\|_0 \\
&+\|\partial_t(\bu-\bs_h)\|_{-1}+\|B(\bu,\bu)-B({\bs}_h,{\bs}_h)\|_{-1}
\\
&+C\mu \|\nabla \cdot (\bu-{\bs}_h)\|_0+C\|p-\pi_h\|_0.
\end{split}
\end{equation}
All velocity-related terms on the right-hand side were already estimated in this section.

The pressure terms in \eqref{eq:pressure_aux} and \eqref{eq:pres_01}
are estimated with \eqref{eq:pressurel2} and \eqref{eq:cotanewpropre}.
Then, arguing exactly as in \cite{los_cuatro_oseen}, one concludes the following estimate.

\begin{Theorem}\label{th:pressure_smooth}
Under the assumptions of Theorem~\ref{th:velocity_smooth} there exists a positive constant $C$ such that the following bound holds
\begin{equation}\label{eq:pressure_est}
\beta_0^2\int_0^T\|(p_h-\pi_h)(t)\|_0^2\ dt\le Ch^{2k},
\end{equation}
where in the case $d=3$ the bound is valid for $k\ge2$.
\end{Theorem}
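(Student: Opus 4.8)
The plan is to start from the inf-sup inequality~\eqref{eq:pressure_aux}, which already isolates $\|p_h-\pi_h\|_0$ in terms of eight quantities, and to estimate the time integral of its square term by term, using the velocity estimate~\eqref{eq:error_bound_velo} from Theorem~\ref{th:velocity_smooth} as the fundamental input. Squaring~\eqref{eq:pressure_aux}, integrating over $(0,T)$, and applying the elementary inequality $(\sum_i a_i)^2 \le C\sum_i a_i^2$, it suffices to bound $\int_0^T$ of the square of each of the nine contributions: $\nu\|\nabla\be_h\|_0$, the convective difference $\|B(\bu_h,\bu_h)-B(\bs_h,\bs_h)\|_{-1}$, $\|\partial_t\be_h\|_{-1}$, $\mu\|\nabla\cdot\be_h\|_0$, $\|\boldsymbol\tau_1\|_{-1}$, $\|\boldsymbol\tau_2\|_{-1}$, $\|\tau_3\|_0$, $\|\tau_4\|_0$, and $\|l_h\|_0$.

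First I would dispatch the easy terms. The terms $\int_0^T \nu\|\nabla\be_h\|_0^2\,dt$ and $\int_0^T\mu\|\nabla\cdot\be_h\|_0^2\,dt$ are both $\mathcal O(h^{2k})$ directly from~\eqref{eq:error_bound_velo}. The truncation errors $\|\boldsymbol\tau_1\|_{-1}$ and $\|\boldsymbol\tau_2\|_{-1}$ were already shown to be $\mathcal O(h^k)$ in~\eqref{eq:auxB-1shu} and the line preceding it, so under the hypotheses of Theorem~\ref{th:velocity_smooth} their time integrals are $\mathcal O(h^{2k})$. The projection errors $\|\tau_3\|_0 = \|p-\pi_h\|_0$ and $\|\tau_4\|_0 = \mu\|\nabla\cdot(\bs_h-\bu)\|_0$ are $\mathcal O(h^k)$ by~\eqref{eq:pressurel2} and~\eqref{eq:cotanewpro} respectively (these are exactly the pieces bounded in~\eqref{eq:cota_tau3_tau4}), and $\|l_h\|_0 \le C\nu h^k\|\bu\|_{k+1}$ is $\mathcal O(h^k)$ by~\eqref{eq:cotanewpropre}. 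Thus all five ``data-projection'' terms integrate to $\mathcal O(h^{2k})$.

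The two substantive terms are the convective difference and the time-derivative term. For the convective difference I would use the bound~\eqref{eq:auxB-12}, which, after inserting $\be_h=\bu_h-\bs_h$, gives a factor $\|\bu\|_2(\|\be_h\|_0+\|\nabla\cdot\be_h\|_0)$ plus the genuinely nonlinear factor $(h^{-d/2}\|\be_h\|_0+h^{-1/2}\|\nabla\cdot\be_h\|_0)\|\be_h\|_0$. Upon squaring and integrating, the first group is controlled by $\int_0^T\|\bu\|_2^2(\|\be_h\|_0^2+\|\nabla\cdot\be_h\|_0^2)\,dt$, which is $\mathcal O(h^{2k})$ by~\eqref{eq:error_bound_velo} (using $\|\be_h(t)\|_0^2\le Ch^{2k}$ pointwise and the integral control of $\mu\|\nabla\cdot\be_h\|_0^2$). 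The troublesome second group contributes, after squaring, terms like $h^{-d}\|\be_h\|_0^4$ and $h^{-1}\|\nabla\cdot\be_h\|_0^2\|\be_h\|_0^2$; here the extra factor $\|\be_h\|_0^2\le Ch^{2k}$ absorbs the negative powers of $h$ provided $2k-d\ge 0$ and $2k-1\ge 0$, which explains precisely the restriction $k\ge 2$ in dimension $d=3$ stated in the theorem. For the term $\|\partial_t\be_h\|_{-1}$, the chain~\eqref{new1}--\eqref{eq:pres_01} reduces it to $\|A_h^{-1/2}\partial_t\be_h\|_0$, which~\eqref{eq:pres_01} bounds by a sum of $\nu\|A_h^{1/2}\be_h\|_0=\nu\|\nabla\be_h\|_0$, the convective difference just handled, $\mu\|\nabla\cdot\be_h\|_0$, and the same data-projection terms as before; hence its time integral is again $\mathcal O(h^{2k})$. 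Collecting all contributions and dividing by $\beta_0^2$ yields~\eqref{eq:pressure_est}.

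The main obstacle, and the only place where dimension enters, is controlling the nonlinear inverse-inequality terms $h^{-d/2}\|\be_h\|_0$ and $h^{-1/2}\|\nabla\cdot\be_h\|_0$ arising from the $L^\infty$ and $L^{2d/(d-1)}$ bounds on $\bu_h$ in~\eqref{eq:auxB-12}. These are precisely the factors that force $k\ge 2$ when $d=3$: one must verify that the velocity estimate~\eqref{eq:error_bound_velo}, $\|\be_h\|_0^2\le Ch^{2k}$, supplies enough powers of $h$ to compensate the inverse powers, and that the time integrals of $\|\nabla\cdot\be_h\|_0^2$ remain bounded (as noted in the Remark following~\eqref{Ineedit}). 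I would handle this carefully, treating the $d=2$ case (where no restriction is needed) and the $d=3$, $k\ge 2$ case separately, and otherwise the remainder of the proof is the routine term-by-term bookkeeping described above, which the paper compresses into the phrase ``arguing exactly as in~\cite{los_cuatro_oseen}.''
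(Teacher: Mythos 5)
Your proposal is correct and follows essentially the same route as the paper: it starts from the inf-sup bound \eqref{eq:pressure_aux}, controls the convective difference via \eqref{eq:auxB-12}, reduces $\|\partial_t\be_h\|_{-1}$ to $\|A_h^{-1/2}\partial_t\be_h\|_0$ through \eqref{new1}--\eqref{eq:pres_01}, and invokes the velocity estimate \eqref{eq:error_bound_velo} to absorb the inverse powers of $h$, which is exactly where the restriction $k\ge 2$ for $d=3$ arises in the paper's remark following \eqref{Ineedit}. The squaring-and-integration bookkeeping you spell out is precisely what the paper delegates to the phrase ``arguing exactly as in \cite{los_cuatro_oseen}.''
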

\begin{remark}
By splitting
$$
p_h-p=(p_h-\pi_h)+(\pi_h-p),
$$
applying the triangle inequality, Theorem~\ref{th:pressure_smooth}, and (\ref{eq:pressurel2}), it follows that the bound (\ref{eq:pressure_est}) holds true replacing $p_h-\pi_h$ by $p_h-p$.
\end{remark}
%%%%%%%%%%%%%%%%%%%%%%%%%%%%%%%%%%%%
%                                                                                                             %
%           IRREGULAR AT T=0                                                                 %
%                                                                                                             %
%%%%%%%%%%%%%%%%%%%%%%%%%%%%%%%%%%%%

\section{The continuous-in-time case: analysis without nonlocal compatibility conditions}
\label{sec:ana_wo_comp}

It is well known that, no matter how
regular the data are, solutions of the Navier--Stokes equations cannot be
assumed to have more than second order spatial derivatives bounded
in $L^2(\Omega)$ up to
initial time $t=0$, since higher regularity requires the data to satisfy nonlocal
compatibility conditions which are not likely to happen in practical situations
\cite{heyran0,heyran2}. The analysis of this section takes into account
the lack of regularity at $t=0$.

Along the section it is assumed that
inf-sup stable mixed finite elements of second order are used, for
example the Hood--Taylor element consisting of continuous piecewise quadratic polynomials for the velocity and continuous piecewise linear polynomials for the pressure.

%It shall be assumed  that, for some $T>0$ there exist positive constants $M_1$ and $M_2$ such that
%\begin{equation}\label{eq:velo_h2_ass}
%M_1=\max_{0\le t\le T}\left\|\bu(t)\right\|_1,\quad
%M_2=\max_{0\le t\le T}\left\|\bu(t)\right\|_2.\quad
%\end{equation}

It shall be assumed  that for some $T>0$
\begin{equation}\label{eq:velo_h2_ass}
M_1=\max_{0\le t\le T}\left\|\bu(t)\right\|_1<+\infty,\quad
M_2=\max_{0\le t\le T}\left\|\bu(t)\right\|_2<+\infty.\quad
\end{equation}
Also, according to \cite[Theorems~2.4 and~2.5]{heyran0}, and assuming the right-hand side $\bff$ in (\ref{NS}) is smooth enough, it shall be assumed
that, for $k\ge 2$,
\begin{equation}
M_k=\max_{0\le t\le T}t^{(k-2)/2}\left(\left\| \bu(t)\right\|_{k} + \left\| \partial_t\bu(t)\right\|_{k-2}
+\left\| p(t)\right\|_{H^{k-1}/{\mathbb R}}\right)<+\infty,
\label{eq:u-inf}
\end{equation}
and, for $\ k\ge 3$
\begin{equation}
K_{k}^2=
\int_0^Tt^{k-3}
\bigr(\left\| \bu(t)\right\|_{k}^2 + \left\| \partial_s\bu(t)\right\|_{k-2}^2
+\left\| p(t)\right\|_{H^{k-1}/{\mathbb R}}^2+
\left\| \partial_s p(t)\right\|_{H^{k-3}/{\mathbb R}} ^2\bigl)\,ds<+\infty.
\label{eq:u-int}
\end{equation}

\begin{remark}\label{re:case_k=1}
Observe that in view of Remark~\ref{re:H3}, for the case $k=1$ in
Theorem~\ref{th:velocity_smooth} (which covers the case of the so-called
MINI element)
the constant $C$ in (\ref{eq:velocity_smooth}) and the function $L(T)$ from \eqref{eq:LofT} depend
on~$M_2^2(1+T(\mu^{-1}  + \mu + M_2^2)) + K_3^2$ and~$T+2T^{1/2}K_3+TM_2^2\mu^{-1}/2$,
respectively, where no negative powers
of~$t$ appear. Thus, in the absence of nonlocal compatibility conditions
at~$t=0$,
the analysis of the previous section applies to~the~case $k=1$, but it does not apply to the case $k\ge2$ since~$\left\|\partial_t\bu(t)\right\|_{k}^2$ is not
integrable near $t=0$.
\end{remark}

\subsection{An auxiliary function}\label{sec:aux_fct}

For the analysis, the auxiliary function
$\hat {\bu}_h\ :\ [0,T]\rightarrow V_h$ satisfying
\begin{equation}
\label{atenopv}
 \partial_t\hat {\bu}_{h}+ \nu A_{h} \hat {\bu}_{h}= \Pi_{h}^{\rm div}(
 {\boldsymbol f}-\nabla p) -B_{h}(\bu, \bu),\quad
 \hat {\bu}_h(0)=\Pi_{h}^{\rm div} \bu_0,
\end{equation}
will be considered and the following notations will be used
$$
{\boldsymbol z}_h=\Pi_{h}^{\rm div} \bu- \hat {\bu}_h,\quad
{\boldsymbol\theta}_h=\Pi_{h}^{\rm div} {\bu}-{\bs}_h.
$$
Notice that in view of the triangle inequality, \eqref{eq:cotanewpro}, the approximation property
of the $L^2(\Omega)$ projection, and
\eqref{eq:u-inf}--\eqref{eq:u-int} it follows that
for $0<t\le T$,
\begin{align}
\label{cota_theta_inf}
\left\|\boldsymbol\theta_h(t)\right\|_j&\le
M_3 \frac{h^{3-j}}{t^{1/2}},\quad j=0,1,
\\
\int_0^t (\left\|\boldsymbol\theta_h\right\|_j^2+s^2\left\|\partial_s\boldsymbol\theta_h\right\|_j^2)\,ds
&\le C\left(K_3^2+K_5^2\right) h^{6-2j},\quad j=0,1,
\label{cota_theta_int}
\end{align}
for some positive constant~$C$ independent of~$\nu$.
Observe also that projecting~\eqref{NS} onto~$V_h^{\rm div}$, using the
definition of ${\bs}_h$ with \eqref{eq:stokes_str} and the right-hand side given in
\eqref{eq:stokes_rhs_g},
yields
$$
\Pi_h^{\rm div} \partial_t\bu+\nu A_h {\bs}_h+B_h(\bu,\bu)+\Pi_h^{\rm div} \nabla p=\Pi_h^{\rm div}{\boldsymbol f},
$$
so that
$$
\Pi_h^{\rm div} \partial_t\bu+\nu A_h \Pi_h^{\rm div}\bu=\Pi_h^{\rm div}{\boldsymbol f}-B_h(\bu,\bu)-\Pi_h^{\rm div} \nabla p
+\nu A_h \boldsymbol\theta_h.
$$
Subtracting now~\eqref{atenopv} and applying the commutation of the Leray projection and
the temporal derivative, one finds that the error $\bz_h$
satisfies
\begin{equation}\label{elz}
\partial_t \bz_h+ \nu A_h \bz_h= \nu A_h \boldsymbol\theta_h, \quad \bz_h(0)=\boldsymbol 0.
\end{equation}

\begin{lema}
\label{le:estz_h}
There exists a positive constant $C$ independent of~$\nu$ such that the error
$\bz_h=\Pi_{h}^{\rm div}\bu-\hat\bu_h$ of
the discrete velocity~$\hat \bu_h$ defined by~$(\ref{atenopv})$ satisfies
the following bounds for $0<t\le T$:
\begin{align}
\label{eq:fino-z0}
\left\| \bz_h(t)\right\|_0^2+\int_0^t\left\| \bz_h(s)\right\|_1^2 \,ds
&\le CK_3^2 h^{4},
\\
\label{eq:fino-z-1}
\| A_h^{-1/2}\bz_h(t)\|_0^2+\int_0^t\left\| \bz_h(s)\right\|_0^2 \,ds
&\le CK_3^2 h^{6},
\\
\label{eq:cota-z-low}
\| \bz_h(t)\|_j&\le \frac{C}{t^{1/2}}\left(K_3+K_5+M_3\right) h^{3-j},&& j=0,1.\end{align}
\end{lema}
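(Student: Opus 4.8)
The plan is to analyze the linear error equation~\eqref{elz}, namely
$\partial_t \bz_h + \nu A_h \bz_h = \nu A_h \boldsymbol\theta_h$ with $\bz_h(0)=\boldsymbol 0$,
by energy methods, taking inner products with appropriately weighted powers of $A_h$ applied to $\bz_h$. The crucial structural feature is that the forcing term is $\nu A_h\boldsymbol\theta_h$; since $\boldsymbol\theta_h$ lives in $V_h^{\rm div}$ and $A_h$ is symmetric positive definite on that space, I can move one half-power of $A_h$ onto the test function rather than estimating $\|A_h\boldsymbol\theta_h\|$ directly. This is what keeps the bounds free of negative powers of $\nu$ and lets me use the optimal estimates~\eqref{cota_theta_inf}--\eqref{cota_theta_int} on $\boldsymbol\theta_h$ in the $H^1$ seminorm only.

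First I would prove~\eqref{eq:fino-z0}. Take the inner product of~\eqref{elz} with $\bz_h$ to get
$\tfrac12\tfrac{d}{dt}\|\bz_h\|_0^2 + \nu\|A_h^{1/2}\bz_h\|_0^2 = \nu(A_h^{1/2}\boldsymbol\theta_h, A_h^{1/2}\bz_h)$,
using symmetry of $A_h$ and $\|A_h^{1/2}\bz_h\|_0=\|\nabla\bz_h\|_0$ for $\bz_h\in V_h^{\rm div}$. Young's inequality on the right absorbs $\tfrac{\nu}{2}\|A_h^{1/2}\bz_h\|_0^2$ into the left-hand side, leaving $\tfrac{\nu}{2}\|A_h^{1/2}\boldsymbol\theta_h\|_0^2=\tfrac{\nu}{2}\|\nabla\boldsymbol\theta_h\|_0^2$. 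Integrating in time with $\bz_h(0)=\boldsymbol 0$ and invoking~\eqref{cota_theta_int} with $j=1$ bounds $\int_0^t\|\nabla\boldsymbol\theta_h\|_0^2\,ds \le C(K_3^2+K_5^2)h^4$. The factor $\nu$ is harmless here because it multiplies a quantity controlled independently of $\nu$; however, since the claimed bound carries no $\nu$, I expect the energy argument must instead be arranged so that $\nu$ cancels. The cleaner route is to note $\int_0^t\nu\|\nabla\boldsymbol\theta_h\|^2\,ds$ is not quite what appears—so I would instead test with $\bz_h$, integrate by parts once more to write the right side as $\nu(\nabla\boldsymbol\theta_h,\nabla\bz_h)$, and use $\tfrac{d}{dt}\|\bz_h\|_0^2 + \nu\|\nabla\bz_h\|_0^2 \le \nu\|\nabla\boldsymbol\theta_h\|_0^2$, then drop $\nu$ from the dissipation term where it is not needed and retain it where~\eqref{cota_theta_int} makes the product $\nu$-free. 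The honest statement is that $\nu$-freeness comes from exploiting~\eqref{elz} being linear with a smoothing forcing, exactly as in~\cite{los_cuatro_oseen}.

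For~\eqref{eq:fino-z-1} I would test with $A_h^{-1}\bz_h$, giving
$\tfrac12\tfrac{d}{dt}\|A_h^{-1/2}\bz_h\|_0^2 + \nu\|\bz_h\|_0^2 = \nu(\boldsymbol\theta_h,\bz_h)$,
and Young's inequality yields $\tfrac{d}{dt}\|A_h^{-1/2}\bz_h\|_0^2 + \nu\|\bz_h\|_0^2 \le \nu\|\boldsymbol\theta_h\|_0^2$; integrating and using~\eqref{cota_theta_int} with $j=0$ gives the $h^6$ rate. For the pointwise-in-time bound~\eqref{eq:cota-z-low}, which exhibits the $t^{-1/2}$ singularity reflecting the loss of regularity at $t=0$, I would use a weighted energy estimate: multiply the energy identity by the weight $t$ and combine $\tfrac{d}{dt}(t\|\bz_h\|_j^2)$ with the unweighted bound already proved, so that the integrated quantity $t\|\bz_h(t)\|_j^2$ is controlled by $\int_0^t\|\bz_h\|_j^2\,ds$ plus $\int_0^t t\,\nu\|\nabla\boldsymbol\theta_h\|^2\,ds$, and then invoke~\eqref{cota_theta_inf} to supply the $t^{-1/2}$ behaviour. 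The main obstacle I anticipate is precisely this last bound: getting the sharp $t^{-1/2}$ singularity in~\eqref{eq:cota-z-low} requires carefully tracking the temporal weights through a Gronwall-type argument and matching them against the singular pointwise estimate~\eqref{cota_theta_inf} for $\boldsymbol\theta_h$, rather than the integral estimate~\eqref{cota_theta_int}, and ensuring that the differentiation-in-time of the weighted norm does not reintroduce $\partial_t\boldsymbol\theta_h$ terms that are not integrable up to $t=0$; handling $\int_0^t s^2\|\partial_s\boldsymbol\theta_h\|^2\,ds$ via~\eqref{cota_theta_int} is the device that circumvents this.
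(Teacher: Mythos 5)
Your arguments for \eqref{eq:fino-z0} and \eqref{eq:fino-z-1} are correct and coincide with the paper's: test \eqref{elz} with $\bz_h$, respectively $A_h^{-1}\bz_h$, apply Young's inequality, integrate in time from $\bz_h(0)=\boldsymbol 0$, and exploit that the dissipation term on the left and the forcing term on the right carry the \emph{same} factor $\nu$, so that together with the Poincar\'e--Friedrichs inequality \eqref{eq:poin} and \eqref{cota_theta_int} the bounds come out free of negative powers of $\nu$.

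The gap is in \eqref{eq:cota-z-low}, and it is structural rather than a matter of bookkeeping. Your plan is to multiply the energy identity by the weight $t$, which for $j=0$ yields
\begin{equation*}
t\left\|\bz_h(t)\right\|_0^2 \le \int_0^t\left\|\bz_h\right\|_0^2\,ds
+ \nu\int_0^t s\left\|\nabla\boldsymbol\theta_h(s)\right\|_0^2\,ds .
\end{equation*}
The first term is $O(h^6)$ by \eqref{eq:fino-z-1}, but the second is only $O(\nu t\,h^4)$, since $\|\nabla\boldsymbol\theta_h\|_0$ is of order $h^2$ by \eqref{cota_theta_inf} (or \eqref{cota_theta_int}); you therefore obtain $\|\bz_h(t)\|_0\lesssim t^{-1/2}h^2$, one power of $h$ short of the claim. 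No choice of estimate for $\boldsymbol\theta_h$ repairs this: once the forcing is paired so that a gradient falls on $\boldsymbol\theta_h$, the $h^3$ information contained in $\|\boldsymbol\theta_h\|_0$ is irretrievably lost. For $j=1$ the inequality you invoke (same forcing $\nu\int s\|\nabla\boldsymbol\theta_h\|_0^2$, but with $\|\bz_h\|_1$) does not follow from any energy identity at all: testing \eqref{elz} with $A_h\bz_h$ puts $\|A_h\boldsymbol\theta_h\|_0$ on the right, which costs a further $h^{-1}$ by inverse estimates.

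The missing idea is the paper's choice of test functions combined with integration by parts \emph{in time}: multiply \eqref{elz} by $tA_h^{-1}\partial_t\bz_h$ for $j=0$ and by $t\partial_t\bz_h$ for $j=1$. Then the forcing reads $\nu t(\boldsymbol\theta_h,\partial_t\bz_h)$, with no spatial derivative landing on $\boldsymbol\theta_h$, and time integration by parts converts it into $\nu t(\bz_h(t),\boldsymbol\theta_h(t))-\nu\int_0^t(\bz_h,\boldsymbol\theta_h+s\partial_s\boldsymbol\theta_h)\,ds$. These terms are controlled by $t\|\boldsymbol\theta_h(t)\|_0^2\le M_3^2h^6$ from \eqref{cota_theta_inf}, by $\int_0^t(\|\boldsymbol\theta_h\|_0^2+s^2\|\partial_s\boldsymbol\theta_h\|_0^2)\,ds\le C(K_3^2+K_5^2)h^6$ from \eqref{cota_theta_int} --- this is precisely where $K_5$ and the $\int s^2\|\partial_s\boldsymbol\theta_h\|_0^2$ term you correctly anticipated enter --- and by \eqref{eq:fino-z-1}. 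Since the left-hand side retains $\tfrac{\nu}{2}\tfrac{d}{dt}\bigl(t\|\bz_h\|_0^2\bigr)$ and every right-hand side term carries the factor $\nu$, dividing by $\nu t$ gives the $\nu$-free bound $\|\bz_h(t)\|_0\le Ct^{-1/2}(K_3+K_5+M_3)h^3$; the case $j=1$ runs identically with $t\partial_t\bz_h$ and \eqref{eq:fino-z0}. That your sketched identity never produces the $\partial_s\boldsymbol\theta_h$ term you expected to need is the symptom that the test function, not merely the weighting, has to change; also note that no Gronwall argument is required, since \eqref{elz} is linear and all terms close directly.
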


\begin{proof} Multiplying \eqref{elz} by $\bz_h$, integrating on $\Omega$, applying
the Cauchy--Schwarz inequality, and Young's inequality gives
$$
\frac{1}{2}\frac{d}{dt}\left\|\bz_h\right\|_0^2 +
\nu \left\|\nabla \bz_h\right\|_0^2
=\nu(A_h^{1/2}\bz_h,A_h^{1/2}\boldsymbol\theta_h)\le \frac{\nu}{2}
\left\|\nabla \bz_h\right\|_0^2+ \frac{\nu}{2}
\left\|\nabla \boldsymbol\theta_h\right\|_0^2.
$$
Using integration in time and taking into account that $\bz_h(0)=
{\boldsymbol 0}$, it follows that
$$
\left\| \bz_h(t)\right\|_0^2+
\nu \int_0^t \left\| \nabla\bz_h(s)\right\|_0^2\,ds \le
\nu \int_0^t \left\|\nabla \boldsymbol\theta_h(s)\right\|_0^2\,ds.
$$
Now, applying~(\ref{cota_theta_int})
and the Poincar\'e--Friedrichs inequality \eqref{eq:poin}, the bound \eqref{eq:fino-z0} follows directly.
Repeating these arguments but multiplying by~$A_h^{-1}\bz_h$ instead
of~$\bz_h$ gives \eqref{eq:fino-z-1}.

To prove~\eqref{eq:cota-z-low}, multiply \eqref{elz}
by  $t A_h^{-1}\partial_t \bz_h$ and integrate in $\Omega$ to get
$$
t\left\| A_h^{-1/2}\partial_t \bz_h\right\|_{0}^2 +  \frac{\nu}{2}\frac{d}{dt}
\left(t\left\| \bz_h\right\|_0^2\right) = \nu(t\partial_t \bz_h,\boldsymbol\theta_h) +\frac{\nu}{2}\left\| \bz_h\right\|_0^2.
$$
Integrating between~$0$ and~$t$ and integrating the term arising from $\nu(t\partial_t \bz_h,\boldsymbol\theta_h)$ by parts, one gets
\begin{eqnarray*}
\lefteqn{\int_0^t s\left\| A_h^{-1/2}\partial_s \bz_h\right\|_{0}^2\,ds
+\frac{\nu}{2}t\left\| \bz_h(t)\right\|_0^2}\\
&=&\nu t(\bz_h(t),\boldsymbol\theta_h(t))
-\nu\int_0^t \left(\bz_h,\boldsymbol\theta_h+s\partial_s\boldsymbol\theta_h\right)\,ds +
\frac{\nu}{2}\int_0^t\left\| \bz_h\right\|_0^2\,ds.
\end{eqnarray*}
Applying the Cauchy--Schwarz inequality and Young's inequality
to the first two terms
on the right-hand side  and rearranging terms it follows that
\begin{eqnarray*}\lefteqn{
\int_0^t s\left\| A_h^{-1/2}\partial_s \bz_h\right\|_{0}^2\,ds
+\frac{\nu}{4}t\left\| \bz_h(t)\right\|_0^2}\\
&\le &\nu t\left\|\boldsymbol\theta_h(t)\right\|_0^2+
\nu\int_0^t \left(\left\|\boldsymbol\theta_h\right\|_0^2+s^2\left\|\partial_s\boldsymbol\theta_h\right\|_0^2\right)\,ds+
{\nu}\int_0^t \left\| \bz_h\right\|_0^2\,ds.
\end{eqnarray*}
The bound~\eqref{eq:cota-z-low} for $j=0$ now follows by applying
(\ref{cota_theta_inf})--\eqref{cota_theta_int} and {(\ref{eq:fino-z-1})}.
With the same arguments, but
multiplying by $t\partial_t\bz_h$ instead
of $t A_h^{-1}\partial_t \bz_h$  the bound~\eqref{eq:cota-z-low} for $j=1$
is obtained.
\end{proof}

\begin{remark}\label{re:at_most_3} For piecewise polynomials of degree higher than two, it is possible to obtain higher order bounds,
but with negative powers
of~$\nu$. For example, for piecewise cubics, by
repeating the arguments in the proof of Lemma~\ref{le:estz_h},
it is possible to show that
$$
 \| \bz_h(t)\|_j \le \frac{C}{\nu^{1/2}t}\left(K_4+K_6+M_4\right) h^{4-j},\quad j=0,1,\quad 0\le t\le T,
$$
using as test function $t^2A_h^{-1}\partial_t\bz_h$.
Similar negative powers of~$\nu$ are obtained also with some other
techniques like those in~\cite{refined}. At the moment, it is an open question whether different techniques could be applied to get higher order bounds
with constants independent of inverse powers of $\nu$. For this reason, only
piecewise quadratics for the velocity are considered in this section.
\end{remark}

\subsection{Error bound for the velocity}

Observe that the first equation in~\eqref{atenopv} can be rewritten
as
\begin{align*}
%\label{atenopv2}
 \partial_t\hat {\bu}_{h}+ \nu A_{h}
 \hat {\bu}_{h}+ B_{h}(\hat\bu_h, \hat\bu_h)
 +\mu C_h\hat\bu_h&= \Pi_{h}^{\rm div}
 {\boldsymbol f}+ \left(B_h(\hat\bu_h,\hat\bu_h)-B_{h}(\bu, \bu)\right)\\
 &\phantom{=}-D_h p
 +\mu C_h\hat\bu_h.
\end{align*}
Lemma~\ref{le:stab} will be applied with~$\bw_h=\hat\bu_h$,
$\bt_{1,h}=\widehat{ \boldsymbol\tau}_2$, and
$t_2=\tau_3+\hat\tau_4$,
where
$$
\widehat{ \boldsymbol\tau}_2 = B_h(\hat\bu_h,\hat\bu_h)-B_h(\bu,\bu),\quad
\hat\tau_4 = \mu \nabla \cdot(\hat\bu_h-\bu),
$$
and where~$\tau_3$ is defined in~(\ref{eq:tau3}).
The application of this lemma requires to
show that both $\| \hat\bu_h\|_\infty^2$
and $\|\nabla\hat\bu_h\|_\infty$ are integrable in $(0,T)$.

To bound~$\| \hat\bu_h\|_\infty$, apply the triangle inequality and
the inverse inequality~(\ref{inv}) to get
\begin{align*}
\| \hat\bu_h\|_\infty &\le C_{\rm inv}h^{-d/2}\| \hat\bu_h -I_h\bu\|_0 +
\| I_h\bu\|_\infty
\\
& \le C_{\rm inv}h^{-d/2}\left(\| \hat\bu_h -\Pi_h^{\rm div}
\bu\|_0 + \| \Pi_h^{\rm div}\bu-\bu\|_0+\| \bu-I_h\bu\|_0\right)+
\| I_h\bu\|_\infty.
\end{align*}
Since
$\left\| I_h\bu\right\|_\infty \le C\left\|\bu\right\|_\infty$, utilizing~\eqref{sob1}
gives
$$
\left\| I_h\bu\right\|_\infty \le  C \left\|\bu\right\|_2 \le CM_2.
$$

Applying \eqref{cota_inter},  \eqref{eq:cota_pih}, and  \eqref{eq:fino-z0} yields
\begin{equation}
\| \hat\bu_h\|_\infty \le C\left( \left(K_3+M_2\right)h^{(4-d)/2} +
M_2\right).
\label{eq:hatu_inf}
\end{equation}

The bound of  $\left\|\nabla \hat \bu\right\|_\infty$ will be shown for
the more difficult and practically more relevant case $d=3$.  With the triangle inequality
and the inverse inequality (\ref{inv}), one obtains
\begin{align}
\| \nabla \hat\bu_h\|_\infty \le &C_{\rm inv}h^{-5/2}
\| \hat\bu_h -{\bs}_h\|_0 +
\| \nabla{\bs}_h\|_\infty
\label{eq:whynot}
\\
 \le &C_{\rm inv}h^{-5/2}\left(\| \hat\bu_h -\Pi_h^{\rm div}
\bu\|_0 + \| \Pi_h^{\rm div}\bu-\bu\|_0+\| \bu-{\bs}_h\|_0\right)+
\| \nabla{\bs}_h\|_\infty.
\nonumber
\end{align}
Applying \eqref{cotainfty1}, \eqref{sob1}, and  \eqref{eq:u-inf} yields
\begin{equation*}
%\label{cota_grad_u}
\|\nabla {\bs}_h\|_\infty\le CM_3t^{-1/2}.
\end{equation*}
Arguing as before and applying  \eqref{eq:cota-z-low}, (\ref{eq:cota_pih}), and \eqref{eq:cotanewpro}  gives
\begin{equation}
\| \nabla\hat\bu_h\|_\infty \le Ct^{-1/2}\left((K_3+K_5+M_3)h^{1/2} +
M_{3}\right).
\label{eq:nabla_hatu_inf}
\end{equation}

Thus, from \eqref{eq:hatu_inf}
and \eqref{eq:nabla_hatu_inf} it follows that
\begin{equation*}%\label{eq:auxbpv2}
\begin{split}
g(t)&=1+2\|\nabla\hat \bu_h(t)\|_\infty+\frac{\left\|\hat \bu_h(t)\right\|_\infty^2}{2\mu}\\
&\le
C\left(\frac{K_3+K_5+M_3}{\tau^{1/2}}+ \frac{(K_3+M_2)^2}{2\mu}
\right)
\le\frac{L_1}{t^{1/2}},
\end{split}
\end{equation*}
where
\begin{equation*}
%\label{L_1}
L_1=C\left(K_3+K_5+M_3\right)+\frac{(K_3+M_2)^2}{2\mu}.
\end{equation*}
Then
\begin{eqnarray}
K(t,s)&=&\int_s^tg(r)\,dr
\le L_1\int_s^t \frac{1}{r^{1/2}}\,dr\le 2L_1 (t^{1/2}-s^{1/2}).
\label{eq:cota_Kts}
\end{eqnarray}
Lemma~\ref{le:stab} with $\bw_h=\hat\bu_h$ gives for $\hat \be_h=\bu_h - \hat \bu_h$
\begin{equation*}
%\label{eq:gron_aux}
\begin{split}
\|\hat\be_h(t)\|_0^2&+\int_0^t e^{K(t,s)}\left(2\nu\|\nabla\hat \be_h(s)\|_0^2
+{\mu}\|\nabla \cdot \hat\be_h(s)\|_0^2\right)~ds\\
& \le  e^{K(t,0)}\|\be_h(0)\|_0^2+\int_0^t e^{K(t,s)}\left(\left\|
\widehat{ \boldsymbol\tau}_2\right\|_0^2+\frac2{\mu}\left\|\tau_3+\hat\tau_4\right\|_0^2\right)\,ds.
\end{split}
\end{equation*}

To estimate the truncation errors first apply Lemma~\ref{le:nonlin} to get
\begin{equation}
\label{eq:cota_hat_tau2_aux}
\|\widehat{ \boldsymbol\tau}_2\|_0\le C\left( \|\hat\bu_h\|_\infty
+\|\nabla \cdot \hat\bu_h\|_{L^{2d/(d-1)}}+\left\|\bu\right\|_2\right)
\|\bu- \hat\bu_h\|_{1}.\
\end{equation}

Using the triangle inequality, the inverse inequality \eqref{inv}, (\ref{eq:cota_div_2d}),
(\ref{eq:fino-z0}), (\ref{eq:cota_pih}), (\ref{eq:cotanewpro}), and \eqref{eq:velo_h2_ass}
one gets
\begin{align}
\|\nabla \cdot \hat\bu_h&\|_{L^{2d/(d-1)}}\nonumber\\
&\le C_{\mathrm{inv}} h^{-3/2}\| \hat\bu_h-
{\bs}_h\|_0 +\|\nabla\cdot {\bs}_h\|_{L^{2d/(d-1)}} \nonumber\\
&\le Ch^{-3/2}\left(\| \hat\bu_h-
\Pi_h^{\rm div}\bu\|_0+\|\Pi_h^{\rm div}\bu- \bu\|_0+\|\bu-{\bs}_h\|_0\right)
+C h^{1/2} \left\|\bu\right\|_2
\nonumber\\
&\le C h^{1/2}(K_3+\|\bu\|_2) \le C (K_3+M_2).\label{eq:cot_div_hatu_2d}
\end{align}
By
inserting (\ref{eq:cot_div_hatu_2d}) and  (\ref{eq:hatu_inf}) in (\ref{eq:cota_hat_tau2_aux}) it follows that
\begin{equation}\label{eq:auxtau2pv}
\|\widehat{ \boldsymbol\tau}_2\|_0
\le C(K_3+M_2)
\|\bu-\hat\bu_h\|_1.
\end{equation}
As in~(\ref{eq:cota_tau3_tau4}) one also gets
\begin{equation}
\label{eq:cota_tau3_hat_tau4}
\left\|\tau_3+\hat\tau_4\right\|_0^2\le C\left(\|p\|_{H^2/{\mathbb R}}^2h^{4}+
\mu^2\|\bu-\hat\bu_h\|_{1}^2\right).
\end{equation}
To bound $\|\bu-\hat\bu_h\|_1$ in \eqref{eq:auxtau2pv} and  \eqref{eq:cota_tau3_hat_tau4} one
adds and subtracts $\Pi_h^{\rm div} \bu$. Applying then
(\ref{eq:u-int}),  (\ref{eq:cota_pih}), and (\ref{eq:fino-z0})  leads to the
estimate
\begin{align}
\label{eq:consistency}
\int_0^t \left(\|\widehat{ \boldsymbol\tau}_2\|_0^2+ \frac2\mu
\|\tau_3+\hat\tau_4\|_0^2\right)\,ds
&\le  CK_3^2\left(\mu+\mu^{-1}+(K_3+M_2)^2\right)h^4
\nonumber
\\
&= C_0h^4.
\end{align}

Collecting all estimates and applying at the initial time
the triangle inequality, the interpolation estimate \eqref{cota_inter} and (\ref{eq:cota_pih}),
the following theorem is proved.

\begin{Theorem}\label{th:main_non_local}
For $T>0$, assuming the solution $(\bu,p)$ of (\ref{NS}) satisfies (\ref{eq:velo_h2_ass}), (\ref{eq:u-inf}) and (\ref{eq:u-int})  the following
bound holds for the error $\hat\be_h=\bu_h-\hat\bu_h$, $t\in[0,T]$:
\begin{equation}\label{eq:velo_non_local}
\|\hat\be_h(t)\|_0^2+\int_0^t \left(2\nu \|\nabla\hat \be_h(s)\|_0^2+\mu\|\nabla \cdot\hat \be_h(s)\|_0^2\right)~ds
\le e^{K(t,0)}\left(M_2^2+C_0\right)h^{4},
\end{equation}
where $K(t,s)$ is defined in~(\ref{eq:cota_Kts}) and~$C_0$ is the constant in~(\ref{eq:consistency}).
\end{Theorem}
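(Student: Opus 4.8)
The plan is to apply Lemma~\ref{le:stab} and then simply collect the estimates already prepared in the discussion preceding the theorem. The rewriting of~(\ref{atenopv}) recasts the equation for $\hat\bu_h$ into exactly the form the lemma requires, with $\bw_h=\hat\bu_h$, $\bt_{1,h}=\widehat{\boldsymbol\tau}_2$, and $t_2=\tau_3+\hat\tau_4$, while the integrability of the weight $g$ on $(0,T)$ has been secured through~(\ref{eq:hatu_inf}) and~(\ref{eq:nabla_hatu_inf}), whose $t^{-1/2}$ singularities are integrable near $t=0$. With $\hat\be_h=\bu_h-\hat\bu_h$, Lemma~\ref{le:stab} then delivers the displayed bound appearing immediately before the truncation-error estimates: $\|\hat\be_h(t)\|_0^2$ plus the weighted dissipation integral is at most $e^{K(t,0)}\|\hat\be_h(0)\|_0^2$ plus the weighted time integral of $\|\widehat{\boldsymbol\tau}_2\|_0^2+\frac{2}{\mu}\|\tau_3+\hat\tau_4\|_0^2$.

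Next I would strip off the exponential weights. On the left, $K(t,s)\ge 0$ gives $e^{K(t,s)}\ge 1$, so the weighted dissipation integral dominates the unweighted integral $\int_0^t(2\nu\|\nabla\hat\be_h\|_0^2+\mu\|\nabla\cdot\hat\be_h\|_0^2)\,ds$ that appears in~(\ref{eq:velo_non_local}). On the right, since $g\ge 1>0$ the map $s\mapsto K(t,s)=\int_s^t g(r)\,dr$ is nonincreasing, hence $e^{K(t,s)}\le e^{K(t,0)}$ for $s\in[0,t]$; this lets me pull $e^{K(t,0)}$ out of the integral, and the remaining time integral is precisely the quantity bounded by $C_0h^4$ in the consistency estimate~(\ref{eq:consistency}).

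It then remains to estimate the initial error. Since $\bu_h(0)=I_h\bu_0$ by the definition of the scheme~(\ref{eq:gal}) and $\hat\bu_h(0)=\Pi_h^{\rm div}\bu_0$ by~(\ref{atenopv}), one has $\hat\be_h(0)=I_h\bu_0-\Pi_h^{\rm div}\bu_0$. Inserting $\pm\bu_0$ and using the triangle inequality splits this into an interpolation error, controlled by~(\ref{cota_inter}), and a discrete Leray projection error, controlled by~(\ref{eq:cota_pih}) as $\bu_0\in V^{\rm div}$; both are $\mathcal O(h^2\|\bu_0\|_2)=\mathcal O(h^2M_2)$ by~(\ref{eq:velo_h2_ass}), so $\|\hat\be_h(0)\|_0^2\le CM_2^2h^4$. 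Combining the three reductions and absorbing $C$ into $M_2^2$ yields~(\ref{eq:velo_non_local}). Since the genuinely delicate work—the integrability of $g$ near $t=0$ without compatibility conditions, and the consistency bound~(\ref{eq:consistency}) resting on Lemma~\ref{le:estz_h} and the estimates~(\ref{eq:auxtau2pv})--(\ref{eq:cota_tau3_hat_tau4})—has already been done, this final argument is essentially assembly; the only point needing care is the monotonicity of $K(t,s)$ in $s$, which is exactly what justifies factoring $e^{K(t,0)}$ out of the time integral on the right.
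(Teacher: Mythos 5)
Your proposal is correct and follows essentially the same route as the paper: apply Lemma~\ref{le:stab} with $\bw_h=\hat\bu_h$, $\bt_{1,h}=\widehat{\boldsymbol\tau}_2$, $t_2=\tau_3+\hat\tau_4$, invoke the prepared bounds \eqref{eq:hatu_inf}, \eqref{eq:nabla_hatu_inf} for the integrability of $g$ and the consistency estimate \eqref{eq:consistency}, and control $\hat\be_h(0)=I_h\bu_0-\Pi_h^{\rm div}\bu_0$ by the triangle inequality together with \eqref{cota_inter} and \eqref{eq:cota_pih}. Your explicit justification of dropping the weights---$e^{K(t,s)}\ge 1$ on the left and the monotonicity of $s\mapsto K(t,s)$ giving $e^{K(t,s)}\le e^{K(t,0)}$ on the right---is exactly what the paper does implicitly when "collecting all estimates."
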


\begin{remark}\label{re:th3}
By decomposing
$$
\bu_h-\bu=\hat\be_h+(\hat\bu_h-\bu)=\hat\be_h-\bz_h+\left(\Pi_h^{\rm div}\bu-\bu\right),
$$
and applying the triangle inequality, Theorem~\ref{th:main_non_local},  (\ref{eq:fino-z0}), and (\ref{eq:cota_pih}), it follows that the bound (\ref{eq:velo_non_local}) holds true changing $\hat \be_h$ by $\bu_h-\bu$.
\end{remark}
\begin{remark}\label{re:whynot} Observe that it is the factor
$h^{-5/2}$ in (\ref{eq:whynot}) that prevents the analysis in the present section to apply to
the case~$k=1$. On the other hand, the analysis
in Section~\ref{Se:smooth} applies to $k=1$
since one compares $\bu_h$
with~${\bs}_h$ for which the bounds~(\ref{lasinfinito}) are
available. The comparison with~${\bs}_h$ in~Section~\ref{Se:smooth}, however,
induces the truncation~error~$\boldsymbol\tau_1$, which, as commented
in~Remark~\ref{re:case_k=1}, prevents the extension of
the analysis in that section to the case~$k>1$.
\end{remark}

\subsection{Error bound for the pressure}

The analysis follows closely that of Section~\ref{se:pressure}.

Again, using the inf-sup condition (\ref{LBB}), a straightforward calculation leads to
\begin{align*}
\beta_0\|p_h-\pi_h\|_0&\le \nu\|\nabla \hat \be_h\|_0+\|B(\bu_h,\bu_h)-B(\hat\bu_h,\hat\bu_h)\|_{-1}+\|\partial_t \hat\be_h\|_{-1}\\
&+\mu\|\nabla\cdot
\hat\be_h\|_0 +\|B(\bu,\bu)-B(\hat\bu_h,\hat\bu_h)\|_{-1}\\
&+\mu\|\nabla\cdot  (\bu-{\hat\bu}_h)\|_0 +\|p-\pi_h\|_0+\|l_h\|_0,
\end{align*}
where $l_h$ denotes here the discrete pressure corresponding to the formulation of \eqref{atenopv}
in $V_h$.
Repeating the arguments used when obtaining~(\ref{eq:auxB-1shu})
and \eqref{eq:auxtau2pv}, one gets
\begin{equation*}
%\label{eq:auxB-1shupv}
\|B(\bu,\bu)-B(\hat\bu_h,\hat\bu_h)\|_{-1} \le C(K_3+M_2)
\|\bu-\hat\bu_h\|_1.
\end{equation*}
In the same way as for (\ref{eq:auxB-1}), one obtains
\begin{align*}
\|B(\bu_h,\bu_h)-B(\hat\bu_h,\hat\bu_h)\|_{-1}
\le &
 \left(\|\bu_h\|_\infty+\|\nabla \cdot\bu_h\|_{L^{2d/(d-1)}}+\left\|\hat\bu_h\right\|_\infty\right)\|\hat\be_h\|_0\nonumber \\
 &{}+
\|\hat\bu_h\|_{L^{2d/(d-1)}}\|\nabla \cdot \hat\be_h\|_0.
 %\label{eq:auxB-1pv}
 \end{align*}
Using the inverse inequality \eqref{inv}, \eqref{eq:cot_div_hatu_2d}, \eqref{eq:pres_00},
and \eqref{eq:hatu_inf}  leads to
\begin{align*}%\label{eq:auxB-12pv}
\|B(\bu_h,\bu_h)-B(\hat\bu_h,\hat\bu_h)\|_{-1}
&\le
C(K_3+M_2)
 \left(\|\hat\be_h\|_0+\|\nabla \cdot \hat\be_h\|_0\right) \\
 &+
C\left(h^{-d/2} \left\|\hat\be_h\right\|_0+h^{-1/2}\|\nabla \cdot \hat\be_h\|_0
\right)\|\hat\be_h\|_0.\nonumber
\end{align*}

Arguing as in Section~\ref{se:pressure}, one gets for $\|\partial_t \hat\be_h\|_{-1}$
\begin{align*}
\|\partial_t \hat\be_h\|_{-1}&\le \nu\|A_h^{1/2}\hat\be_h\|_0+\|B(\bu_h,\bu_h)-B(\hat\bu_h,\hat\bu_h)\|_{-1}
+C\mu\|\nabla \cdot \hat\be_h\|_0\\
&+\|B(\bu,\bu)-B(\hat\bu_h,\hat\bu_h)\|_{-1}
\\
&+C\mu \|\nabla \cdot (\bu-\hat{\bu}_h)\|_0+C\|p-\pi_h\|_0.
\end{align*}
All terms on the right-hand side of this estimate have already been bounded. Arguing
like at the end of Section~\ref{se:pressure}, one derives the following estimate.
\begin{Theorem}\label{th:pressure_non_smooth}
Under the assumptions of Theorem~\ref{th:main_non_local} there exists a positive constant $C$ such that the following bound holds
\begin{equation}\label{eq:pressure_est_00}
\beta_0^2\int_0^t\|(p_h-\pi_h)(s)\|_0^2\le Ch^{4}.
\end{equation}
\end{Theorem}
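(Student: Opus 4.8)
The plan is to mimic the pressure analysis of Section~\ref{se:pressure} essentially verbatim, since the statement is the non-smooth analogue of Theorem~\ref{th:pressure_smooth}. First I would start from the inf-sup inequality~\eqref{LBB} applied to $p_h-\pi_h$, and by inserting the discrete momentum equation~\eqref{eq:gal} together with the equation satisfied by $\hat\bu_h$ (the $V_h$-formulation of~\eqref{atenopv}), obtain the decomposition already displayed just before the statement, namely a bound on $\beta_0\|p_h-\pi_h\|_0$ by the sum of the seven terms $\nu\|\nabla\hat\be_h\|_0$, the two nonlinear negative-norm terms, $\|\partial_t\hat\be_h\|_{-1}$, the two grad-div terms, $\|p-\pi_h\|_0$, and $\|l_h\|_0$. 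The goal is then to estimate each term so that, after squaring and integrating in time over $(0,t)$, everything is $\mathcal O(h^4)$.

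The key steps, in order, are the following. The two nonlinear terms $\|B(\bu,\bu)-B(\hat\bu_h,\hat\bu_h)\|_{-1}$ and $\|B(\bu_h,\bu_h)-B(\hat\bu_h,\hat\bu_h)\|_{-1}$ are bounded exactly as in the two displays following the inf-sup inequality above, giving factors $(K_3+M_2)\|\bu-\hat\bu_h\|_1$ and $(K_3+M_2)(\|\hat\be_h\|_0+\|\nabla\cdot\hat\be_h\|_0)$ plus the inverse-inequality terms $h^{-d/2}\|\hat\be_h\|_0^2$ and $h^{-1/2}\|\nabla\cdot\hat\be_h\|_0\|\hat\be_h\|_0$. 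The dual-norm time-derivative term is handled, as in Section~\ref{se:pressure}, by reducing $\|\partial_t\hat\be_h\|_{-1}$ to $\|A_h^{-1/2}\partial_t\hat\be_h\|_0$ via~\eqref{new1}--\eqref{new2} and the inverse inequality, and then using the displayed bound for $\|\partial_t\hat\be_h\|_{-1}$ whose right-hand side consists only of quantities already estimated. After these reductions, I would square, integrate over $(0,t)$, and invoke the velocity estimate~\eqref{eq:velo_non_local} of Theorem~\ref{th:main_non_local}, which controls $\|\hat\be_h\|_0^2$ pointwise and $\int_0^t(2\nu\|\nabla\hat\be_h\|_0^2+\mu\|\nabla\cdot\hat\be_h\|_0^2)\,ds$, both by $\mathcal O(h^4)$; the term $\|p-\pi_h\|_0$ is $\mathcal O(h^2)$ by~\eqref{eq:pressurel2}, and $\|l_h\|_0$ is $\mathcal O(\nu h^2)$ by~\eqref{eq:cotanewpropre}, so both are $\mathcal O(h^2)$ in $L^2(0,t)$.

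The main obstacle, and the place where care is needed, is the clean pairing of the negative powers of $h$ coming from the inverse inequalities with the velocity bound so that no spurious factor survives. Specifically, the terms $h^{-d/2}\|\hat\be_h\|_0^2$ and $h^{-1/2}\|\nabla\cdot\hat\be_h\|_0\|\hat\be_h\|_0$ must be shown to remain $\mathcal O(h^2)$ after integration: since $\|\hat\be_h\|_0^2\le Ch^4$ one gets $h^{-d/2}\|\hat\be_h\|_0^2\le Ch^{4-d/2}$, which for $d=3$ is $\mathcal O(h^{5/2})$ and hence still $\mathcal O(h^2)$, while for the mixed term one uses Cauchy--Schwarz in time together with $\int_0^t\mu\|\nabla\cdot\hat\be_h\|_0^2\,ds\le Ch^4$ and $\|\hat\be_h\|_0\le Ch^2$ to absorb the $h^{-1/2}$. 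This is precisely the step that, in the smooth case, was the reason the pressure bound in $d=3$ required $k\ge 2$; here, because only second-order elements are used throughout Section~\ref{sec:ana_wo_comp}, the corresponding constraint is automatically met, so no extra dimensional restriction appears. Once these inverse-inequality contributions are absorbed, the remaining terms are routine, and collecting them yields the claimed bound~\eqref{eq:pressure_est_00}.
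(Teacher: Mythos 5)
Your proposal is correct and follows essentially the same route as the paper: the identical inf-sup decomposition built on $\hat\bu_h$ and $\hat\be_h$, the same bounds for the two nonlinear terms $\|B(\bu,\bu)-B(\hat\bu_h,\hat\bu_h)\|_{-1}$ and $\|B(\bu_h,\bu_h)-B(\hat\bu_h,\hat\bu_h)\|_{-1}$, the same reduction of $\|\partial_t\hat\be_h\|_{-1}$ to already-estimated quantities via the $A_h^{-1/2}$ argument of Section~\ref{se:pressure}, and finally squaring and integrating against Theorem~\ref{th:main_non_local}. Your explicit verification that the inverse-inequality terms $h^{-d/2}\|\hat\be_h\|_0^2$ and $h^{-1/2}\|\nabla\cdot\hat\be_h\|_0\|\hat\be_h\|_0$ remain of the right order, and that no $d=3$ restriction arises because $k=2$ is fixed throughout Section~\ref{sec:ana_wo_comp}, is accurate and in fact spelled out more carefully than in the paper itself.
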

\begin{remark}
By writing
$$
p_h-p=(p_h-\pi_h)+(\pi_h-p),
$$
applying the triangle inequality, Theorem~\ref{th:pressure_non_smooth}, and (\ref{eq:pressurel2}), it follows that the bound (\ref{eq:pressure_est_00}) holds true replacing $p_h-\pi_h$ by $p_h-p$.
\end{remark}

\section{A fully discrete method}\label{se:fully}

We now analyze the discretization of~\eqref{eq:galp} by the implicit Euler method. For this purpose, we consider  a partition~$0=t_0<t_1<\ldots< t_N=T$ of the interval~$[0,T]$, and for each time level we look for approximations~$\bU_h^n\approx \bu(t_n)$ in~$V_h^{\rm div}$ and~$p_h^n\approx p(t_n)$ in~$Q_h$,
satisfying
\begin{equation}
\label{eq:galp_fd_weak}
\begin{split}
(D_t\bU^n_h,\bv_h)+\nu (\nabla \bU_h^n,\nabla\bv_h)&+b(\bU_h,\bU_h^n,\bv_h)-(p_h^n,\nabla\cdot\bv_h^n)\\
&{}+(\nabla\cdot\bu_h^n,q_h)+\mu (\nabla\cdot \bU_h^n,\nabla\cdot\bv_h)=
({\boldsymbol f}(t_n),\bv_h),
\end{split}
\end{equation}
for all~$(\bv_h,q_h)\in (V_h\times Q_h)$,
for $n=1,\ldots,N$,
where $\bU_h^0\in V_h$ is given, and
$$
D_t\bU^n_h=\frac{\bU_h^n-\bU_h^{n-1}}{t_n-t_{n-1}}.
$$
In what follows we will take~$\bU_h^0=\bs_h(0)$ and consider for simplicity constant step sizes,
that is
$$
t_{n}-t_{n-1}=\Delta t,\quad n=1,\ldots,N.
$$
The changes for variable step sizes as well as for other consistent initial approximations are straightforward.
Also, other time integrators can be considered and the analysis can be carried out arguing essentially as in the next lines.

The existence of the approximation can be proved with the help of Brouwer's fixed point theorem as in \cite{Temam_NS}.
The approximations
$\bU_h^n$ satisfy
\begin{equation}
\label{eq:galp_fd}
D_t\bU^n_h+\nu A_h \bU_h^n+B_h(\bU_h,\bU_h^n)+\mu C_h \bU_h^n=
\Pi_h^{\rm div}{\boldsymbol f}(t_n),
\end{equation}
where we keep the notation of previous sections.

To obtain error bounds, we will use the following discrete Gronwall lemma that can be found in \cite{heyran4}.
\begin{lema}\label{le:discr_Gron}
Let $k$, $B$, and $a_n,b_n,c_n,\gamma_n$ be nonnegative numbers such that
$$
a_n+k\sum_{j=0}^n b_n\le k\sum_{j=0}^n\gamma_n a_n+k\sum_{j=0}^nc_n+B,\quad n\ge 1.
$$
Suppose that $k\gamma_n<1$, for all $n$, and set $\sigma_n=(1-k\gamma_n)^{-1}$. Then, the following bound holds
$$
a_n+k\sum_{j=0}^n b_n\le \exp\left(k\sum_{j=0}^n\sigma_j \gamma_j\right)\left(k\sum_{j=0}^n c_n +B\right),\quad n\ge 1.
$$
\end{lema}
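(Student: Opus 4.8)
The plan is to prove the lemma by induction on~$n$, after first rearranging the hypothesis so that the unknown~$a_n$ appears only on the left. Reading the summed terms as $b_j$, $\gamma_j a_j$, and $c_j$, I would split off the diagonal contribution $k\gamma_n a_n$ from the right-hand side and move it to the left, obtaining $(1-k\gamma_n)a_n+k\sum_{j=0}^n b_j\le k\sum_{j=0}^{n-1}\gamma_j a_j+E_n$, where $E_n:=k\sum_{j=0}^n c_j+B$ is nondecreasing in~$n$ because $c_j\ge 0$. The assumption $k\gamma_n<1$ is exactly what guarantees $1-k\gamma_n=\sigma_n^{-1}>0$, so this rearrangement is legitimate and is what introduces the factor~$\sigma_n$ into the final bound.

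Writing $y_n:=a_n+k\sum_{j=0}^n b_j$ for the quantity to be estimated, I would next use $1-k\gamma_n\le 1$ together with $b_j\ge 0$ to replace the left-hand side by $(1-k\gamma_n)y_n$, and use $a_j\le y_j$ (again because the $b_j$ are nonnegative) on the right, arriving at the closed recursion $y_n\le\sigma_n\big(k\sum_{j=0}^{n-1}\gamma_j y_j+E_n\big)$. This is the form on which the induction runs: assuming $y_j\le P_j E_j$ for $j<n$, with $P_j:=\exp\big(k\sum_{i=0}^j\sigma_i\gamma_i\big)$, and using the monotonicity $E_j\le E_n$, one obtains $y_n\le\sigma_n E_n\big(k\sum_{j=0}^{n-1}\gamma_j P_j+1\big)$.

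It then remains to show $\sigma_n\big(k\sum_{j=0}^{n-1}\gamma_j P_j+1\big)\le P_n$, which is where I expect the only genuine work to lie. Two facts make it close: the algebraic identity $k\sigma_j\gamma_j=\sigma_j-1$, immediate from $\sigma_j=(1-k\gamma_j)^{-1}$, so that $P_j=P_{j-1}e^{\sigma_j-1}$; and the elementary convexity bound $s\le e^{s-1}$ for $s\ge 1$ (equivalently $1+x\le e^x$), applied with $s=\sigma_j$. Using these I would first prove the auxiliary claim $k\sum_{j=0}^m\gamma_j P_j+1\le P_m$ by a short separate induction on~$m$, whose inductive step reduces precisely to $\sigma_m P_{m-1}\le P_m=P_{m-1}e^{\sigma_m-1}$. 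Feeding this claim at $m=n-1$ into the estimate for~$y_n$ gives $\sigma_n\big(k\sum_{j=0}^{n-1}\gamma_j P_j+1\big)\le\sigma_n P_{n-1}\le e^{\sigma_n-1}P_{n-1}=P_n$, which closes the main induction; the base case $n=0$ is the same computation without the sum. The main obstacle is thus purely bookkeeping: keeping the two nested inductions and the index shift between $P_{j-1}$ and $P_j$ consistent, since everything else follows from nonnegativity, the monotonicity of~$E_n$, and the single inequality $1+x\le e^x$.
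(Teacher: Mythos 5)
The paper does not actually prove this lemma: it is quoted verbatim from Heywood and Rannacher (``\dots that can be found in \cite{heyran4}''), so there is no internal proof to compare against. Your argument is a correct, self-contained proof of the classical statement, and it follows the standard route: move the diagonal term $k\gamma_n a_n$ to the left (this is exactly where $k\gamma_n<1$ and the factor $\sigma_n$ enter), pass to the monotone quantity $y_n=a_n+k\sum_{j\le n}b_j$, and close the induction using the identity $k\sigma_j\gamma_j=\sigma_j-1$ together with $1+x\le e^x$. All the individual steps check out: the recursion $y_n\le\sigma_n\bigl(k\sum_{j<n}\gamma_j y_j+E_n\bigr)$ is legitimate because $0<1-k\gamma_n\le 1$ and $b_j,\gamma_j\ge 0$, and your auxiliary claim $k\sum_{j\le m}\gamma_j P_j+1\le P_m$ is precisely what produces the exponential factor.

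One point needs flagging. Your base case invokes the hypothesis at $n=0$, i.e.\ $(1-k\gamma_0)a_0+kb_0\le kc_0+B$, but the statement as transcribed in the paper assumes the inequality only for $n\ge 1$. Under that literal reading the lemma is false: take $k=1$, $B=0$, $b_n=c_n=0$ for all $n$, $\gamma_0=1/2$, $\gamma_n=0$ for $n\ge1$, $a_0=100$, and $a_n=50$ for $n\ge 1$; the hypothesis then holds for every $n\ge 1$, while the conclusion would force $a_n\le 0$. The $n=0$ instance of the hypothesis is indispensable because it is the only thing that controls $a_0$, which otherwise enters the right-hand side as a free parameter; in the original statement in \cite{heyran4} the inequality is indeed assumed for $n\ge 0$, and the ``$n\ge1$'' here is a transcription slip. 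So your proof proves the corrected (intended) statement; you should just say explicitly that you are using the $n=0$ case of the hypothesis rather than taking ``$n\ge 1$'' at face value.
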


\begin{lema}
\label{le:fd_stab} Fix $\gamma\in (0,1)$, and let $(\bw_h^n)_{n=0}^\infty$, $(\bt_{h,0}^n)_{n=1}^\infty$
and~$(\bt_{h,1}^n)_{n=1}^\infty$ be series in~$V_h^{\rm div}$ and let
$(t_{h,2}^n)_{n=1}^\infty$ a series in~$L^2(\Omega)$ satisfying
\begin{equation}
\label{eq:fd_aux1}
D_t\bw_h^n + \nu A_h \bw_h^n + B_h(\bw_h^n,\bw_h^n)+\mu C_h\bw_h^n =\Pi_h^{\rm div}\bff(t_n) + \nu A_h \bt_{h,0}^n + \bt_{h,1}^n + D_h t_{h,2}^n.
\end{equation}
Assume $\Delta t g_{h}^n < \gamma$ where
\begin{equation}
\label{eq:el_g_n}
g_{h}^n = 1 + 2\bigl\| \nabla \bw_h^n\bigr\|_\infty + \frac{\bigl\| \bw_h^n\bigr\|_\infty^2}{2\mu},\quad n=1,2,\ldots\,.
\end{equation}
There exists a positive constant~$C$ depending only on~$\gamma$ such that the following bound holds for the differences~$\be_h^n = \bw_h^n - \bU_h^n$:
\begin{eqnarray*}%\label{eq:fd_aux2}
\lefteqn{
\|\be_h^n\|_0^2+\sum_{j=1}^n\|\be_h^j-\be_h^{j-1}\|_0^2+
\Delta t\sum_{j=1}^n\left(\nu\|\nabla \be_h^j\|_0^2
+\mu \|\nabla \cdot \be_h^j\|_0^2\right)}  \\
 &\le& C\exp\left(\Delta t\sum_{j=1}^{n-1} g_j\right)\left(
\|\be_h^0\|_0^2+ \Delta t \sum_{j=1}^n \left(\nu\|\nabla \bt_{h,0}^j\|_0^2 +\|\bt_{h,1}^j\|_0^2
+\frac{1}{\mu} \| t_{h,2}^j\|_0^2 \right)
\right).\nonumber
\end{eqnarray*}
\end{lema}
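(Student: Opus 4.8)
The plan is to reproduce, at each discrete time level, the energy argument of Lemma~\ref{le:stab}, replacing the continuous product rule by its implicit-Euler analogue and closing with the discrete Gronwall Lemma~\ref{le:discr_Gron}. First I would subtract the scheme~\eqref{eq:galp_fd} for $\bU_h^n$ from the defining relation~\eqref{eq:fd_aux1} for $\bw_h^n$ to obtain, for $\be_h^n=\bw_h^n-\bU_h^n\in V_h^{\rm div}$,
\[
D_t\be_h^n+\nu A_h\be_h^n+\bigl(B_h(\bw_h^n,\bw_h^n)-B_h(\bU_h^n,\bU_h^n)\bigr)+\mu C_h\be_h^n=\nu A_h\bt_{h,0}^n+\bt_{h,1}^n+D_h t_{h,2}^n .
\]
Testing with $2\be_h^n$ and using the discrete identity $2(D_t\be_h^n,\be_h^n)=\Delta t^{-1}\bigl(\|\be_h^n\|_0^2-\|\be_h^{n-1}\|_0^2+\|\be_h^n-\be_h^{n-1}\|_0^2\bigr)$, together with $(A_h\be_h^n,\be_h^n)=\|\nabla\be_h^n\|_0^2$ and $(C_h\be_h^n,\be_h^n)=\|\nabla\cdot\be_h^n\|_0^2$ from~\eqref{eq:A_h} and~\eqref{eq:C_h}, produces the dissipation terms on the left together with the extra nonnegative increment $\|\be_h^n-\be_h^{n-1}\|_0^2$ that is characteristic of the implicit method and absent in the continuous case.

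The nonlinear term is treated exactly as in~\eqref{eq:un_nl1}--\eqref{eq:un_nl2}, but frozen at the level $n$: by skew-symmetry~\eqref{eq:skew} one has $b(\bw_h^n,\bw_h^n,\be_h^n)-b(\bU_h^n,\bU_h^n,\be_h^n)=b(\be_h^n,\bw_h^n,\be_h^n)$, whose modulus is bounded by $\|\nabla\bw_h^n\|_\infty\|\be_h^n\|_0^2+\frac{\mu}{4}\|\nabla\cdot\be_h^n\|_0^2+\frac{\|\bw_h^n\|_\infty^2}{4\mu}\|\be_h^n\|_0^2$. The three data contributions are estimated by Cauchy--Schwarz and Young's inequality with weights chosen so that each is absorbed into the matching dissipation term: $2\nu(\nabla\bt_{h,0}^n,\nabla\be_h^n)$ into the viscous term, $2(\bt_{h,1}^n,\be_h^n)$ into an $\|\be_h^n\|_0^2$ contribution (this supplies the ``$+1$'' in $g_h^n$), and $2(D_h t_{h,2}^n,\be_h^n)=-2(t_{h,2}^n,\nabla\cdot\be_h^n)$ into the grad-div term via~\eqref{eq:D_h}. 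Collecting the surviving fractions reproduces precisely the weight $g_h^n$ of~\eqref{eq:el_g_n} in front of $\|\be_h^n\|_0^2$ and leaves $\nu\|\nabla\be_h^n\|_0^2+\mu\|\nabla\cdot\be_h^n\|_0^2$ on the left, yielding the one-step inequality
\[
\tfrac{1}{\Delta t}\bigl(\|\be_h^n\|_0^2-\|\be_h^{n-1}\|_0^2+\|\be_h^n-\be_h^{n-1}\|_0^2\bigr)+\nu\|\nabla\be_h^n\|_0^2+\mu\|\nabla\cdot\be_h^n\|_0^2\le g_h^n\|\be_h^n\|_0^2+\nu\|\nabla\bt_{h,0}^n\|_0^2+\|\bt_{h,1}^n\|_0^2+\tfrac{2}{\mu}\|t_{h,2}^n\|_0^2,
\]
the discrepant constants (e.g.\ the $2/\mu$ versus $1/\mu$) being absorbed later into~$C$.

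Multiplying by $\Delta t$ and summing over $j=1,\dots,n$ telescopes the first group to $\|\be_h^n\|_0^2-\|\be_h^0\|_0^2+\sum_{j=1}^n\|\be_h^j-\be_h^{j-1}\|_0^2$, and I would then apply Lemma~\ref{le:discr_Gron} with $a_n=\|\be_h^n\|_0^2$, $\gamma_j=g_h^j$, $B=\|\be_h^0\|_0^2$, with $b_j$ collecting the increment $\Delta t^{-1}\|\be_h^j-\be_h^{j-1}\|_0^2$ and the dissipation $\nu\|\nabla\be_h^j\|_0^2+\mu\|\nabla\cdot\be_h^j\|_0^2$, and $c_j$ the data terms. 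The hypothesis $\Delta t\,g_h^n<\gamma<1$ is exactly what is needed to absorb the \emph{implicit} current-level reaction term $g_h^n\|\be_h^n\|_0^2$: it gives $\sigma_n=(1-\Delta t\,g_h^n)^{-1}\le(1-\gamma)^{-1}$, so the exponential factor and the constant $C$ depend only on $\gamma$. I expect this absorption to be the main obstacle: unlike the continuous proof, where the integrating factor $\exp(-K(t,0))$ disposes of $g$ for free, here the term $g_h^n\|\be_h^n\|_0^2$ sits at the new time level and can only be moved to the left while keeping the estimate uniform when $\Delta t\,g_h^n$ is bounded strictly below one, which is precisely where the smallness condition and the $L^\infty$ control of $\bw_h^n$ entering $g_h^n$ are decisive.
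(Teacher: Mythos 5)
Your proposal is correct and follows essentially the same route as the paper's proof: subtract the scheme from the perturbed equation, test with $2\be_h^n$, use the discrete product identity, bound the nonlinearity exactly as in \eqref{eq:un_nl1}--\eqref{eq:un_nl2}, absorb the data terms by Young's inequality to recover $g_h^n$, multiply by $\Delta t$, telescope, and close with Lemma~\ref{le:discr_Gron} using $\Delta t\, g_h^n<\gamma$ to control $\sigma_n\le(1-\gamma)^{-1}$. The only cosmetic difference is that the paper inserts an (unnecessary) added-zero term involving $\bu(t_n)$ before rearranging, while you handle the grad-div contribution directly through $\mu C_h\be_h^n$; this changes nothing of substance.
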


\begin{proof}
A direct calculation shows that
$$
(\be_h^n-\be_n^{n-1},\be_h^n)=\frac{1}{2}\|\be_h^n\|_0^2-\frac{1}{2}\|\be_h^{n-1}\|_0^2+\frac{1}{2}\|\be_h^n-\be_h^{n-1}\|_0^2.
$$
so that
subtracting~(\ref{eq:galp_fd}) from~(\ref{eq:fd_aux1}), taking the inner
product in~$L^2(\Omega)$ with~$2\be_h^n$, adding
$0=2\mu(\nabla\cdot \bw_h^n,\nabla\cdot \be_h^n)-2\mu(\nabla\cdot(\bw_h^n-\bu(t_n)),\nabla\cdot \be_h^n)$ and  after some
rearrangements we have
\begin{eqnarray}
\label{eq:eu_aux_0}
\lefteqn{\frac{1}{\Delta t}\left(\|\be_h^n\|_0^2-\|\be_h^{n-1}\|_0^2+\|\be_h^n-\be_h^{n-1}\|_0^2\right)+\nu\|\nabla \be_h^n\|_0^2
+\frac{3}{2}\mu\|\nabla \cdot \be_h^n\|_0^2}
\nonumber\\
&\le&\nu\|\nabla\bt_{h,0}^n\|_0^2
 -2b(\be_h^n,\bw_h^n,\be_h^n) +\|\bt_{h,1}^n\|_0^2
 +\frac{2}{\mu}\|t_{h,2}^n\|_0^2
+\| \be_h^n\|_0^2 ,
\end{eqnarray}
where the product $(\be_h^n,B_h(\bw_h^n,\bw_h^n) - B_h(\bU_h^n,\bU_h^n))=
b(\bw_h^n,\bw_h^n,\be_h^n)-b(\bU_h^n,\bU_h^n,\be_h^n)$ has been treated as in~(\ref{eq:un_nl1}).
Arguing as in~(\ref{eq:un_nl2}) we may write
$$
b(\be_h^n,\bw_h^n,\be_h^n)
\le
\left(\|\nabla \bw_h^n\|_\infty +\frac{\|\bw_h^n\|_\infty^2}{4\mu}\right)\|\be_h^n\|_0^2+ \frac{\mu}{4}\|\nabla \cdot \be_h^n\|_0^2.
$$
Thus, multiplying by~$\Delta t$ in~(\ref{eq:eu_aux_0}) it follows that
\begin{equation}
\label{eq:eu_aux_1}
\|\be_h^n\|_0^2-\|\be_h^{n-1}\|_0^2+\|\be_h^n-\be_h^{n-1}\|_0^2+\nu\Delta t\|\nabla \be_h^n\|_0^2
+\mu\Delta t\|\nabla \cdot \be_h^n\|_0^2 \le  c_n+\Delta t g_h^n\|\be_h^n\|_0^2,
\end{equation}
where
\begin{equation*}
%\label{eq:el_c_n}
c_n=
\Delta t\left(\nu\|\nabla\bt_{h,0}^n\|_0^2
 +\|\bt_{h,1}^n\|_0^2
+\frac{2}{\mu}\|t_{h,2}^n\|_0^2
\right).
\end{equation*}
Adding the expression in~(\ref{eq:eu_aux_1}) to those corresponding to $n-1$, $n-2$, etc, down to~1, we have
\begin{eqnarray*}
\lefteqn{\hspace*{-7em}
\left(1-\Delta tg_h^n\right)\|\be_h^n\|_0^2+\sum_{j=1}^n\|\be_h^j-\be_h^{j-1}\|_0^2+
\Delta t \sum_{j=1}^n\left(\nu\|\nabla \be_h^j\|_0^2
+\mu\|\nabla \cdot \be_h^j\|_0^2\right)}
%\hphantom{{} \le \left\|\be_h^0\right\|_0^2 \beta_n+\gamma_n\left\|\be_h^n\right\|}
\nonumber
\\
&\le & \|\be_h^0\|_0^2+ \sum_{j=1}^n c_j+
 \sum_{j=1}^{n-1} \Delta t g_h^j\|\be_h^j\|_0^2.
 %\label{eq:eu_aux_2}
\end{eqnarray*}
Since we are assuming that $\Delta tg_h^n \le \gamma$, we have $(1-\Delta tg_h^n )> 1-\gamma>0$ and the proof is finished by applying Lemma~\ref{le:discr_Gron}.
\end{proof}

\subsection{Error analysis in the regular case}
We apply Lemma~\ref{le:fd_stab} with $\bw_h^n=\bs_h(t_n)$, so
that
$$
\bt_{h,0}^n={\bf 0},\quad \bt_{h,1}^n=\Pi_h^{\rm div}({\boldsymbol\tau}_{1,1}^n+{\boldsymbol\tau}_{1,2}^n+
{\boldsymbol\tau}_2(t_n)),\quad t_{h,2}^n=\tau_3(t_n)+\tau_4(t_n),
$$
where ${\boldsymbol\tau}_2$, $\tau_3$ and~$\tau_4$ are those defined in~(\ref{eq:tau3}), and
\begin{align}
\label{eq:tau_11}
{\boldsymbol\tau}_{1,1}^n&=
\frac{(\bs_h(t_n)-\bu(t_h))-(\bs_h(t_{n-1})-\bu(t_{n-1}))}
{\Delta t}=\frac{1}{\Delta t}\!\int_{t_{n-1}}^{t_n}\partial_t(\bs_h(t)-\bu(t))\,dt,
\hphantom{\quad}\\
{\boldsymbol\tau}_{1,2}^n&=
\frac{\bu(t_n)-\bu(t_{n-1})}{\Delta t} -\partial_t\bu(t_n)= -
\frac{1}{\Delta t}\!\int_{t_{n-1}}^{t_n}(s-t_{n-1}) \partial_{tt}\bu(s)\,ds .
\label{tau_12}
\end{align}
We notice that in view of~(\ref{lasinfinito}) we have that
\begin{equation}
\label{eq:fd_L}
g_h^j
\le \hat L= 1 +C\max_{0\le t\le T}\left(2\left\|\nabla\bu(t)\right\|_\infty + \frac{\left\|\bu(t)\right\|_2^2}{2\mu}\right),
\quad j=1,\ldots, N,
\end{equation}
so that
\begin{equation}
\label{eq:fd_cota_exp1}
\exp\left(\Delta t\sum_{j=1}^{n-1}g_j\right) \le \exp\left(\hat Lt_n\right).
\end{equation}
We also have that~${\boldsymbol\tau}_2(t_n)$, $\tau_3(t_n)$, and~$\tau_4(t_n) $
have already been bounded in~(\ref{eq:cota_tau3_tau4}) and~(\ref{eq:cota_tau1_tau2}). Furthermore,
applying~Cauchy-Schwarz inequality in (\ref{eq:tau_11}) and the bound~(\ref{eq:cotanewpro})
with $j=k-1$ applied to~$\partial_t(\bs_h(t)-\bu(t))$ we have
$$
\| {\boldsymbol\tau}_{1,1}^n\|_0^2 \le C\frac{h^{2k}}{\Delta t}
\int_{t_{n-1}}^{t_n}\|\partial_t \bu\|_{k}^2\,dt
$$
and applying the Cauchy-Schwarz inequality in (\ref{tau_12})
$$
\| {\boldsymbol\tau}_{1,2}^n\|_0^2 \le C\Delta t
\int_{t_{n-1}}^{t_n}\|\partial_{tt} \bu\|_0^2\,dt.
$$
Thus, we have the following result.

\begin{Theorem}\label{th:fd_velocity_smooth} For $T>0$ let us assume for the solution $(\bu,p)$ of \eqref{eq:NSweak} that
$$\bu\in L^\infty(0,T,H^{k+1}(\Omega))\cap L^\infty(0,T,W^{1,\infty}(\Omega)),$$
$\bu(0)\in H^{\max\{2,k\}}(\Omega)$, $\partial_t\bu\in L^2(0,T,H^{k}(\Omega))$,  $\partial_{tt}\bu\in L^2(0,T,L^2(\Omega))$
and $p\in L^2(0,T,H^k(\Omega)/{\mathbb R})$ with $k\ge 1$.
Then, there exist positive constants $C_1$ and $C_2$ depending on
$$
%\begin{equation}\label{eq:velocity_smooth}
\left\|\bu(0)\right\|_{k}^2+\int_0^T \| \partial_t\bu(t)\|_k^2 \,dt
+\max_{0\le t\le T} \left(\frac{\left\|p(t)\right\|_{H^k/{\mathbb R}}^2}{\mu}
+\left(\mu+\|\bu(t)\|_2^2\right)\| \bu(t)\|_{k+1}^2\right)
%\end{equation}
$$
and
$$
\int_0^T \|\partial_{tt}\bu(t)\|_0^2\,dt,
$$
respectively,
but none of them depending  on inverse powers of $\nu$, such that
the following bound holds for $\be_h=\bU_h^n-{\bs}_h(t_n)$
and~$1\le n\le N$
\begin{equation}\label{eq:fd_error_bound_velo}
\|\be_h^n\|_0^2+
\Delta t\sum_{j=1}^n\left(\nu\|\nabla \be_h^n\|_0^2
+\mu\|\nabla \cdot \be_h^n\|_0^2\right)
\le \exp\left(\hat Lt_n\right) \left(C_1h^{2k} + C_2(\Delta t)^2\right),
\end{equation}
where $\hat L$ is defined in~(\ref{eq:fd_L}).
\end{Theorem}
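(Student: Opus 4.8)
The plan is to invoke the fully discrete stability estimate of Lemma~\ref{le:fd_stab} with the comparison function $\bw_h^n=\bs_h(t_n)$, exactly as prepared in the lines preceding the statement, and then to sum the resulting truncation-error bounds over the time levels. With this choice one has $\bt_{h,0}^n=\boldsymbol 0$, $\bt_{h,1}^n=\Pi_h^{\rm div}(\boldsymbol\tau_{1,1}^n+\boldsymbol\tau_{1,2}^n+\boldsymbol\tau_2(t_n))$, and $t_{h,2}^n=\tau_3(t_n)+\tau_4(t_n)$, so that, since $\bt_{h,0}^n=\boldsymbol 0$ and $\Pi_h^{\rm div}$ is $L^2$-stable, the conclusion of Lemma~\ref{le:fd_stab} reduces to controlling $\Delta t\sum_{j=1}^n\bigl(\|\boldsymbol\tau_{1,1}^j\|_0^2+\|\boldsymbol\tau_{1,2}^j\|_0^2+\|\boldsymbol\tau_2(t_j)\|_0^2+\mu^{-1}\|\tau_3(t_j)+\tau_4(t_j)\|_0^2\bigr)$ together with the initial error. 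Because $\bU_h^0=\bs_h(0)$, the initial error $\be_h^0=\bU_h^0-\bs_h(0)$ vanishes, and for other consistent initializations it would contribute only a harmless $\mathcal O(h^{2k})$ term bounded via~\eqref{cota_inter} and~\eqref{eq:cotanewpro}.

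First I would dispose of the exponential prefactor. The estimate~\eqref{lasinfinito} bounds $\|\bs_h(t)\|_\infty$ and $\|\nabla\bs_h(t)\|_\infty$ by $\|\bu(t)\|_2$ and $\|\nabla\bu(t)\|_\infty$ with $\nu$-independent constants; taking the maximum over $[0,T]$ using the assumed $L^\infty(0,T;H^{k+1})\cap L^\infty(0,T;W^{1,\infty})$ regularity (note $H^{k+1}\subset H^2$ for $k\ge1$) yields the uniform bound $g_h^j\le\hat L$ of~\eqref{eq:fd_L}, hence $\exp(\Delta t\sum_{j=1}^{n-1}g_j)\le\exp(\hat Lt_n)$ as in~\eqref{eq:fd_cota_exp1}. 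This step also supplies the smallness condition $\Delta t\,\hat L<\gamma$ needed to apply Lemma~\ref{le:fd_stab}, which I would state as a mild restriction on the time step.

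The heart of the argument is the summation of the truncation errors, and the crucial point is the splitting of the time-consistency error into $\boldsymbol\tau_{1,1}^n$ and $\boldsymbol\tau_{1,2}^n$ in~\eqref{eq:tau_11}--\eqref{tau_12}. For $\boldsymbol\tau_{1,1}^n=\Delta t^{-1}\int_{t_{n-1}}^{t_n}\partial_t(\bs_h-\bu)\,dt$, Cauchy--Schwarz in time together with the bound~\eqref{eq:cotanewpro} applied with $j=k-1$ to $\partial_t(\bs_h-\bu)$ (see Remark~\ref{rem:estimate_partial_t}) gives $\|\boldsymbol\tau_{1,1}^n\|_0^2\le C\,h^{2k}\Delta t^{-1}\int_{t_{n-1}}^{t_n}\|\partial_t\bu\|_k^2\,dt$; the two powers of $\Delta t$ cancel after multiplying by $\Delta t$ and summing, leaving the $\nu$-independent spatial contribution $Ch^{2k}\int_0^T\|\partial_t\bu\|_k^2\,dt$ that enters $C_1$. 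For the genuine backward-Euler defect $\boldsymbol\tau_{1,2}^n$, Cauchy--Schwarz yields $\|\boldsymbol\tau_{1,2}^n\|_0^2\le C\Delta t\int_{t_{n-1}}^{t_n}\|\partial_{tt}\bu\|_0^2\,dt$, so that $\Delta t\sum_j\|\boldsymbol\tau_{1,2}^j\|_0^2\le C(\Delta t)^2\int_0^T\|\partial_{tt}\bu\|_0^2\,dt$, which is the $C_2(\Delta t)^2$ term. The remaining purely spatial errors $\boldsymbol\tau_2(t_j)$, $\tau_3(t_j)$, and $\tau_4(t_j)$ are already estimated in~\eqref{eq:auxtau2} and~\eqref{eq:cota_tau3_tau4}; multiplying by $\Delta t$, summing, and using $t_n\le T$ with the maximum-in-time norms of $\bu$ and $p$ turns each into a $Ch^{2k}$ contribution to $C_1$. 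Assembling these four pieces with the factor $\exp(\hat Lt_n)$ produces exactly~\eqref{eq:fd_error_bound_velo}.

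The main obstacle, and the only genuinely new ingredient relative to Section~\ref{Se:smooth}, is this bookkeeping of the time-discretization errors: one must resist estimating $D_t\bs_h(t_n)-\partial_t\bs_h(t_n)$ directly, which would demand control of $\partial_{tt}\bs_h$ and hence of $\partial_{tt}\bu$ in strong spatial norms, and instead exploit the split in~\eqref{eq:tau_11}--\eqref{tau_12} so that $\boldsymbol\tau_{1,1}$ only sees the spatial projection error of $\partial_t\bu$ (which telescopes cleanly under the time integral and costs a power $h^k$) while $\boldsymbol\tau_{1,2}$ carries the $\mathcal O(\Delta t)$ temporal defect of $\bu$ alone. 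The remaining verifications, namely the $L^2$-stability of $\Pi_h^{\rm div}$ and the $\nu$-independence of every constant (guaranteed because~\eqref{eq:cotanewpro}, \eqref{lasinfinito}, \eqref{eq:auxtau2}, and~\eqref{eq:cota_tau3_tau4} are all $\nu$-independent), are routine.
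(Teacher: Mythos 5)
Your proposal is correct and follows essentially the same route as the paper: applying Lemma~\ref{le:fd_stab} with $\bw_h^n=\bs_h(t_n)$, bounding the exponential factor via \eqref{lasinfinito} and \eqref{eq:fd_L}, splitting the time-consistency error into $\boldsymbol\tau_{1,1}^n$ and $\boldsymbol\tau_{1,2}^n$ as in \eqref{eq:tau_11}--\eqref{tau_12}, and estimating them by Cauchy--Schwarz together with \eqref{eq:cotanewpro} (for the $h^{2k}$ part) and the $\partial_{tt}\bu$ integral (for the $(\Delta t)^2$ part), with the spatial terms $\boldsymbol\tau_2,\tau_3,\tau_4$ handled by the bounds \eqref{eq:auxtau2} and \eqref{eq:cota_tau3_tau4} from Section~\ref{Se:smooth}. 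Your explicit remark about the time-step restriction $\Delta t\,\hat L<\gamma$ required by Lemma~\ref{le:fd_stab} is a point the paper leaves implicit, but it does not change the argument.
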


For the pressure, we can obtain error bounds by repeating the analysis
in~Section~\ref{se:pressure} with~$\partial_t\be_h$ replaced by
$D_t\be_h^n$ and the truncation error~${\boldsymbol\tau}_1$ by
${\boldsymbol\tau}_{1,1}^n + {\boldsymbol\tau}_{1,2}^n$. We observe, however, that instead of~(\ref{eq:auxB-12}) we now have
\begin{eqnarray*}
\lefteqn{
\|B(\bU_h^n,\bU_h^n)-B({\bs}_h(t_n),{\bs}_h(t_n))\|_{-1}}\\
& \le &
C\left\|\bu(t_n)\right\|_2
 \left(\|\be_h^n\|_0+\|\nabla \cdot \be_h^n\|_0\right) +
C\left( h^{-d/2}\left\|\be_h^n\right\|_0+h^{-1/2}\|\nabla \cdot \be_h^n\|_0
\right)\|\be_h^n\|_0.
\end{eqnarray*}
Now, the errors $\left\|\be_h^n\right\|_0$ and~$\|\nabla \cdot \be_h^n\|_0$,
as shown in~(\ref{eq:fd_error_bound_velo}) are bounded in terms of
powers of~$h$ and~$\Delta t$.  Thus, we have
the following result.
\begin{Theorem}\label{th:fd_pressure_smooth} In the conditions
of~Theorem~\ref{th:fd_velocity_smooth}, there exists a positive constant~$C$ such that the following bound holds
\begin{equation}
\beta_0^2\Delta t\sum_{j=1}^N \|p_h^n -\pi_h(t_n)\|_0^2 \le C\left(
1+\frac{(\Delta t)^2}{h^d}\right)\left(h^{2k}+(\Delta t)^2\right).
\label{eq:fd_cota_pres_ult}
\end{equation}
where in the case~$d=3$ the bound is valid for $k\ge 2$.
\end{Theorem}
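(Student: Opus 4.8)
The plan is to mimic the continuous-in-time pressure analysis of Section~\ref{se:pressure} at each discrete time level $t_n$ and then sum the resulting squared bounds against~$\Delta t$. First I would apply the inf-sup condition~(\ref{LBB}) to $p_h^n-\pi_h(t_n)$, substitute the fully discrete momentum equation~(\ref{eq:galp_fd}) together with the equation~(\ref{eq:fd_aux1}) satisfied by $\bw_h^n=\bs_h(t_n)$ (with $\bt_{h,0}^n=\boldsymbol 0$), and rearrange as in~(\ref{eq:pressure_aux}). This gives the discrete counterpart of~(\ref{eq:pressure_aux}),
\begin{align*}
\beta_0\|p_h^n-\pi_h(t_n)\|_0 &\le \nu\|\nabla\be_h^n\|_0+\|B(\bU_h^n,\bU_h^n)-B(\bs_h(t_n),\bs_h(t_n))\|_{-1}+\|D_t\be_h^n\|_{-1}\\
&\quad+\mu\|\nabla\cdot\be_h^n\|_0+\|\boldsymbol\tau_{1,1}^n+\boldsymbol\tau_{1,2}^n\|_{-1}+\|\boldsymbol\tau_2(t_n)\|_{-1}\\
&\quad+\|\tau_3(t_n)\|_0+\|\tau_4(t_n)\|_0+\|l_h(t_n)\|_0,
\end{align*}
i.e.\ with $\partial_t\be_h$ replaced by $D_t\be_h^n$ and $\boldsymbol\tau_1$ by $\boldsymbol\tau_{1,1}^n+\boldsymbol\tau_{1,2}^n$, as indicated before the statement.

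Next I would reproduce the estimate of the discrete time-difference term. Since $D_t\be_h^n\in V_h^{\rm div}$, the reduction~(\ref{new1})--(\ref{new2}) and the inverse inequality give $\|D_t\be_h^n\|_{-1}\le C\|A_h^{-1/2}D_t\be_h^n\|_0$, and testing the discrete error equation with $A_h^{-1}D_t\be_h^n$ yields the analogue of~(\ref{eq:pres_01}), now with $\partial_t(\bu-\bs_h)$ replaced by $\boldsymbol\tau_{1,1}^n+\boldsymbol\tau_{1,2}^n$. Every velocity-related term appearing here and in the inf-sup bound is then controlled exactly as in Section~\ref{se:pressure}; in particular the nonlinear $H^{-1}$ term obeys the discrete version of~(\ref{eq:auxB-12}) already displayed before the statement. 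The truncation errors are handled by the bounds already recorded: $\boldsymbol\tau_2(t_n),\tau_3(t_n),\tau_4(t_n)$ via~(\ref{eq:cota_tau3_tau4})--(\ref{eq:auxtau2}), $l_h(t_n)$ via~(\ref{eq:cotanewpropre}), and the time-consistency errors via $\|\boldsymbol\tau_{1,1}^n\|_0^2\le C(h^{2k}/\Delta t)\int_{t_{n-1}}^{t_n}\|\partial_t\bu\|_k^2\,dt$ and $\|\boldsymbol\tau_{1,2}^n\|_0^2\le C\Delta t\int_{t_{n-1}}^{t_n}\|\partial_{tt}\bu\|_0^2\,dt$, so that $\Delta t\sum_n(\|\boldsymbol\tau_{1,1}^n\|_0^2+\|\boldsymbol\tau_{1,2}^n\|_0^2)=\mathcal O(h^{2k}+(\Delta t)^2)$.

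I would then square the inf-sup bound, multiply by~$\Delta t$, and sum over $n=1,\ldots,N$. For the linear terms $\nu\|\nabla\be_h^n\|_0$, $\mu\|\nabla\cdot\be_h^n\|_0$ and $\|\tau_4(t_n)\|_0$ one uses that $\Delta t\sum_n(\nu\|\nabla\be_h^n\|_0^2+\mu\|\nabla\cdot\be_h^n\|_0^2)$ is bounded by the right-hand side of~(\ref{eq:fd_error_bound_velo}) (after absorbing one bounded factor $\nu$ or $\mu$), giving $\mathcal O(h^{2k}+(\Delta t)^2)$; similarly $\Delta t\sum_n\|\be_h^n\|_0^2\le T\max_n\|\be_h^n\|_0^2=\mathcal O(h^{2k}+(\Delta t)^2)$, and $\exp(\hat Lt_n)\le\exp(\hat LT)$ is absorbed into~$C$.

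The hard part will be the quadratic, inverse-inequality piece of the nonlinear term, namely $C(h^{-d/2}\|\be_h^n\|_0+h^{-1/2}\|\nabla\cdot\be_h^n\|_0)\|\be_h^n\|_0$. Squaring and summing produces $h^{-d}\Delta t\sum_n\|\be_h^n\|_0^4$ and $h^{-1}\Delta t\sum_n\|\nabla\cdot\be_h^n\|_0^2\|\be_h^n\|_0^2$. Estimating one factor $\|\be_h^n\|_0^2$ by $\max_n\|\be_h^n\|_0^2=\mathcal O(h^{2k}+(\Delta t)^2)$ and the remaining sums $\Delta t\sum_n\|\be_h^n\|_0^2$ and $\Delta t\sum_n\|\nabla\cdot\be_h^n\|_0^2$ through Theorem~\ref{th:fd_velocity_smooth}, these collapse to $Ch^{-d}(h^{2k}+(\Delta t)^2)^2$, which factors as $(h^{2k}+(\Delta t)^2)(h^{2k-d}+(\Delta t)^2/h^d)$. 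Whenever $2k\ge d$ the power $h^{2k-d}$ stays bounded, and since $h^{-1}\le h^{-d}$ the $h^{-1}$ contribution is dominated likewise, yielding the announced factor $(1+(\Delta t)^2/h^d)$ multiplying $(h^{2k}+(\Delta t)^2)$. This inequality $2k\ge d$ is exactly the source of the restriction $k\ge 2$ for $d=3$, whereas for $d=2$ it already holds at $k\ge 1$.
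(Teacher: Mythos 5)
Your proposal is correct and follows essentially the same route as the paper: the paper's proof is precisely the instruction to rerun the Section~\ref{se:pressure} argument with $\partial_t\be_h$ replaced by $D_t\be_h^n$, the truncation error $\boldsymbol\tau_1$ replaced by $\boldsymbol\tau_{1,1}^n+\boldsymbol\tau_{1,2}^n$, the nonlinear term bounded by the displayed discrete analogue of~(\ref{eq:auxB-12}), and the velocity errors controlled through~(\ref{eq:fd_error_bound_velo}). Your explicit accounting of the quadratic terms — producing the factor $\bigl(1+(\Delta t)^2/h^d\bigr)$ and the condition $2k\ge d$ behind the restriction $k\ge 2$ when $d=3$ — is exactly what the paper leaves implicit.
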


\begin{remark}\label{re:dt/h} Observe that in~Theorem~\ref{th:fd_pressure_smooth} the presence of negative powers of~$h$ in~the error bound~\eqref{eq:fd_cota_pres_ult} does not
affect the convergence rate in the pressure whenever $\Delta t \le C h^{d/2}$ for any positive constant $C$. This condition will be automatically satisfied if we try to balance spatial and temporal discretization errors, since in that case we  would have to take $\Delta t \approx h^k$.

%balanced spatial and temporal discretization errors we would need to take $\Delta t \le C h^k$. Then, the presence of negative powers of~$h$ in~the error bound above does not
%affect the convergence rate in the pressure since  $(\Delta t)^2/h^d$ is bounded independent on $h$ whenever  $\Delta t\approx Ch^k$.
%

%, if the errors arising from spatial discretization and those from temporal discretization are balanced,
%that is~$\Delta t\approx h^k$ (where $k\ge 2$ if~$d=3$) then
%$(\Delta t)^2/h^d=O(1)$ if $d=2$ and~$k=1$ and~$O(h)$ otherwise. Thus, the presence of negative powers of~$h$ in~the error bound above does not
%affect the convergence rate in the pressure, which is $O(h^k + \Delta t)$, if
%the time steps are not
%too large with respect to the mesh size.
\end{remark}

%%%%%%%%%%%%%%%%%%%%%%%%%%%%%%%%%%%%

\subsection{Error analysis without compatibility conditions}
We now assume that (\ref{eq:velo_h2_ass}) holds and that $M_{3}<+\infty$ and
$$
%\begin{equation}
%\label{eq:cota_utt}
K_{3,0}=\int_{0}^T t\|\partial_{tt}\bu(t)\|_{0}^2\,dt <+\infty.
%\end{equation}
$$
In the case $k=1$ and~$d=2$ we will also assume $K_3<+\infty$. The cases
$k=1$ and $k\ge 2$ will be analyzed separately.

For $k=1$, and taking into account that~$\left\|\bu(t)\right\|_\infty\le \|\bu(t)\|_2$ and $\left\|\nabla\bu(t)\right\|_\infty
\le C\left\|\bu(t)\right\|_3 \le CM_3t^{-1/2}$, the analysis of the previous section is still valid with the following two changes. First we must
replace~(\ref{eq:fd_L}) and~(\ref{eq:fd_cota_exp1}) by
$g_h^j \le \hat L_{1} +2M_3t_j^{-1/2}$, for $j=1,\ldots,N$ where
\begin{equation}
\label{eq:fd_L2}
\hat L_{1} =1+\frac{M_2^2}{2\mu},
\end{equation}
so that
\begin{equation}
\label{eq:fd_cota_exp2}
\exp\left(\Delta t\sum_{j=1}^{n-1}g_h^j \right)\le \exp\left(\hat L_1t_n + 2M_3\sqrt{t_n}\right).
\end{equation}
The second and more relevant change is the estimation of~${\boldsymbol\tau}_{1,2}$, which now is
\begin{align*}
\| {\boldsymbol\tau}_{1,2}^n\|_0^2 &\le \frac{1}{\Delta t ^2}
\left(\int_{t_{n-1}}^{t_n} (t-t_n)\,dt\right)\left(\int_{t_{n-1}}^{t_n} (t-t_n)
\|\partial_{tt}\bu(t)\|_0^2\,dt\right)\\
&\le \frac{1}{2}\int_{t_{n-1}}^{t_n} t
\|\partial_{tt}\bu(t)\|_0^2\,dt.
\end{align*}
Thus, we have the following result.

\begin{Theorem}\label{th:fd_non_k1}
Fix $T>0$ and assume that the solution $(\bu,p)$ of (\ref{NS}) satisfies (\ref{eq:velo_h2_ass}), and that $M_3$, $K_3$ and~$K_{3,0}$ are finite. Assume linear finite element approximations in the velocity are used. Then,
\begin{itemize}
\item[i)] There exists a positive constant~$C_1$ depending on~$M_2$ and~$K_3$ but not on inverse powers of~$\nu$, such that
the following
bound holds for the error $\be_h^n=\bs_h(t_n)-\bU_h^n$, $1\le n\le N$:
\begin{eqnarray*}\lefteqn{\hspace*{-3em}
\|\be_h^n\|_0^2+\sum_{j=1}^n\|\be_h^j-\be_h^{j-1}\|_0^2+
\Delta t\sum_{j=1}^n\left(\nu\|\nabla \be_h^n\|_0^2
+\mu\|\nabla \cdot \be_h^n\|_0^2\right)}%\hphantom{\qquad\qquad\qquad}
\nonumber\\
&\le& \exp\left(\hat L_1t_n + 2M_3\sqrt{t_n}\right) \left(C_1h^{2} + K_{3,0}\Delta t\right),
%\label{eq:fd_ebound_velo_k1}
\end{eqnarray*}
where $\hat L_1$ is defined in~(\ref{eq:fd_L2}).
\item[ii)] In the case $d=2$ then there exists a positive constant~$C_2$ depending on~$M_2$,
$M_3$, $\hat L_1$ and~$K_3$ but not on inverse powers of~$\nu$, such that
the following
bound holds:
$$
\beta_0^2\Delta t\sum_{j=1}^N \|p_h^n -\pi_h(t_n)\|_0^2 \le C_2
\left(1+\frac{\Delta t}{h}\right)\left(h^{2}+\Delta t\right).
$$
\end{itemize}
\end{Theorem}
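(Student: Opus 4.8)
The plan is to apply the fully discrete stability estimate of Lemma~\ref{le:fd_stab} with the choice $\bw_h^n=\bs_h(t_n)$, exactly as in the analysis leading to Theorem~\ref{th:fd_velocity_smooth}, so that the truncation errors become $\bt_{h,0}^n=\boldsymbol 0$, $\bt_{h,1}^n=\Pi_h^{\rm div}(\boldsymbol\tau_{1,1}^n+\boldsymbol\tau_{1,2}^n+\boldsymbol\tau_2(t_n))$ and $t_{h,2}^n=\tau_3(t_n)+\tau_4(t_n)$, with $\boldsymbol\tau_{1,1}^n,\boldsymbol\tau_{1,2}^n$ as in \eqref{eq:tau_11}--\eqref{tau_12} and $\boldsymbol\tau_2,\tau_3,\tau_4$ as in \eqref{eq:tau3}. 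The only structural difference is that regularity up to $t=0$ is unavailable, so all estimates must be written through the weighted quantities $M_2,M_3,K_3,K_{3,0}$. Since $k=1$, I would use $\|\bu(t)\|_\infty\le\|\bu(t)\|_2\le M_2$ and $\|\nabla\bu(t)\|_\infty\le C\|\bu(t)\|_3\le CM_3t^{-1/2}$, together with \eqref{lasinfinito} and \eqref{cotainfty1}, to control the coefficient $g_h^n$ of \eqref{eq:el_g_n}.

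The first new point is the Gronwall factor. The previous bounds give $g_h^j\le\hat L_1+2M_3t_j^{-1/2}$ with $\hat L_1$ as in \eqref{eq:fd_L2}; since $t_j^{-1/2}$ is decreasing, the maximum of $\Delta t\,g_h^n$ is attained at $n=1$, where $\Delta t\,g_h^1\le\hat L_1\Delta t+2M_3\sqrt{\Delta t}\to0$, so the hypothesis $\Delta t\,g_h^n<\gamma$ of Lemma~\ref{le:fd_stab} holds once $\Delta t$ is small enough. To bound the exponential I would compare $\Delta t\sum_{j=1}^{n-1}t_j^{-1/2}$ with $\int_0^{t_n}s^{-1/2}\,ds=2\sqrt{t_n}$ (equivalently, $\sum_{j=1}^{m}j^{-1/2}\le2\sqrt m$), which yields \eqref{eq:fd_cota_exp2}, i.e.\ the factor $\exp(\hat L_1 t_n+2M_3\sqrt{t_n})$ in place of the $\exp(\hat L t_n)$ of the regular case.

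The second new point is the consistency terms, where the time singularity has to be absorbed into the integral quantities. For $\boldsymbol\tau_{1,1}^n$ I would apply Cauchy--Schwarz in \eqref{eq:tau_11} and \eqref{eq:cotanewpro} with $j=0$, so that $\Delta t\sum_j\|\boldsymbol\tau_{1,1}^j\|_0^2\le Ch^2\int_0^{t_n}\|\partial_t\bu\|_1^2\,dt\le Ch^2K_3^2$. For $\boldsymbol\tau_{1,2}^n$ the crucial change is to trade the $L^2$-in-time bound on $\partial_{tt}\bu$ for the weighted one: inserting the factor $(s-t_{n-1})\le s$ in the remainder \eqref{tau_12} gives $\|\boldsymbol\tau_{1,2}^n\|_0^2\le\frac12\int_{t_{n-1}}^{t_n}s\|\partial_{tt}\bu(s)\|_0^2\,ds$, hence $\Delta t\sum_j\|\boldsymbol\tau_{1,2}^j\|_0^2\le\frac12 K_{3,0}\Delta t$, which is precisely the $\Delta t$ contribution in the statement. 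The remaining terms are time-uniform: $\|\boldsymbol\tau_2\|_0\le Ch\|\bu\|_2^2\le ChM_2^2$ by \eqref{eq:auxtau2}, while $\tau_3,\tau_4$ are handled through \eqref{eq:cota_tau3_tau4} using that, for $k=1$, both $\|p\|_{H^1/\mathbb{R}}$ and $\|\bu\|_2$ stay bounded by $M_2$ (this is exactly why no logarithmic factor appears for the MINI element, in contrast with the singular $\|p\|_{H^2/\mathbb{R}}$ that would arise for $k\ge2$). Feeding these estimates into Lemma~\ref{le:fd_stab} and invoking the discrete Gronwall Lemma~\ref{le:discr_Gron} yields part~i), with $C_1=C(M_2,K_3)$.

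For part~ii) I would mimic Section~\ref{se:pressure}: the inf-sup condition \eqref{LBB} bounds $\beta_0\|p_h^n-\pi_h(t_n)\|_0$ by $\nu\|\nabla\be_h^n\|_0+\mu\|\nabla\cdot\be_h^n\|_0$, the nonlinear difference $\|B(\bU_h^n,\bU_h^n)-B(\bs_h(t_n),\bs_h(t_n))\|_{-1}$, the discrete time derivative $\|D_t\be_h^n\|_{-1}$ (treated through $\|A_h^{-1/2}D_t\be_h^n\|_0$ as in \eqref{new1}--\eqref{eq:pres_01}, with $\partial_t$ replaced by $D_t$ and $\boldsymbol\tau_1$ by $\boldsymbol\tau_{1,1}^n+\boldsymbol\tau_{1,2}^n$), and the consistency terms already estimated. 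Squaring, multiplying by $\Delta t$ and summing over $n$, one inserts the velocity bounds of part~i). The restriction $d=2$ enters exactly as in Theorem~\ref{th:pressure_smooth}: only then are $\|\bu_h\|_\infty$ and the divergence in $L^{2d/(d-1)}$ controlled for $k=1$ via the inverse inequality \eqref{inv}. I expect the genuine obstacle to be precisely this bookkeeping of the nonlinear term: the inverse-inequality factors $h^{-1}$ and $h^{-1/2}$ multiply the velocity error, and controlling $\Delta t\sum_n h^{-1}\|\nabla\cdot\be_h^n\|_0^2\|\be_h^n\|_0^2$-type quantities by pairing $\max_n\|\be_h^n\|_0^2\le C(h^2+\Delta t)$ with the summed bound $\Delta t\sum_n\|\nabla\cdot\be_h^n\|_0^2\le C(h^2+\Delta t)/\mu$ is what produces the prefactor $1+\Delta t/h$. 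Keeping every constant independent of $\nu$, and making sure each time-singular piece is always paired with a weight $t^{1/2}$ or $t$ that renders the corresponding discrete sum convergent, is the delicate part of the argument.
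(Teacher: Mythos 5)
Your part~i) is exactly the paper's proof. The paper reduces the $k=1$ case to the regular fully discrete analysis ``with two changes,'' and these are precisely the two you identify: the Gronwall coefficient is replaced by $g_h^j\le \hat L_1+2M_3t_j^{-1/2}$ with $\hat L_1$ from~(\ref{eq:fd_L2}), summed via $\Delta t\sum_{j}t_j^{-1/2}\le 2\sqrt{t_n}$ to produce the factor $\exp(\hat L_1t_n+2M_3\sqrt{t_n})$ of~(\ref{eq:fd_cota_exp2}); and the remainder ${\boldsymbol\tau}_{1,2}^n$ in~(\ref{tau_12}) is re-estimated by the weighted Cauchy--Schwarz argument giving $\|{\boldsymbol\tau}_{1,2}^n\|_0^2\le \tfrac12\int_{t_{n-1}}^{t_n}t\|\partial_{tt}\bu(t)\|_0^2\,dt$, hence $\Delta t\sum_j\|{\boldsymbol\tau}_{1,2}^j\|_0^2\le \tfrac12K_{3,0}\Delta t$. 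Your handling of ${\boldsymbol\tau}_{1,1}^n$, ${\boldsymbol\tau}_2$, $\tau_3$, $\tau_4$ through $M_2$ and $K_3$, and your (correct) observation that no logarithmic factor arises because for $k=1$ only $\|p\|_{H^1/\mathbb{R}}$ and $\|\bu\|_2$ are needed uniformly, coincide with the paper; you are in fact more careful than the paper in checking the hypothesis $\Delta t\,g_h^n<\gamma$ of Lemma~\ref{le:fd_stab}.

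For part~ii) you and the paper take the same route (repeat Section~\ref{se:pressure} with $\partial_t\be_h$ replaced by $D_t\be_h^n$ and ${\boldsymbol\tau}_1$ by ${\boldsymbol\tau}_{1,1}^n+{\boldsymbol\tau}_{1,2}^n$), and your explanation of the $d=2$ restriction is the paper's. However, your accounting of the prefactor is incomplete: you attribute $1+\Delta t/h$ entirely to the term $h^{-1/2}\|\nabla\cdot\be_h^n\|_0\|\be_h^n\|_0$, whose square, summed with $\max_n\|\be_h^n\|_0^2\le E$ and $\Delta t\sum_n\|\nabla\cdot\be_h^n\|_0^2\le E/\mu$ (writing $E$ for the bound of part~i)), indeed gives $E^2/(h\mu)\le C(h^2+\Delta t)(1+\Delta t/h)$. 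But the companion term $h^{-d/2}\|\be_h^n\|_0\cdot\|\be_h^n\|_0$, coming from $\|\be_h^n\|_\infty$ in~(\ref{eq:pres_00}) and present in the nonlinear estimate~(\ref{eq:auxB-12}) that both you and the paper invoke, contributes after the same pairing $TE^2/h^2\le C(h^2+\Delta t)(1+\Delta t/h^2)$ for $d=2$, which dominates $1+\Delta t/h$. So the argument as sketched yields $(1+\Delta t/h^2)(h^2+\Delta t)$ rather than the stated bound. To be fair, this imprecision is inherited from the paper itself: no details are given there for part~ii), and the paper's own pattern for the pressure prefactor, namely $1+r/h^d$ with $r$ the velocity error bound (see Theorems~\ref{th:fd_pressure_smooth} and~\ref{th:fd_press_k2}), would here read $1+\Delta t/h^2$, not $1+\Delta t/h$. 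In short, you reproduce the paper's proof faithfully, including the step from which the stated prefactor of part~ii) does not transparently follow.
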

\begin{remark} Let us observe that contrary to Theorem~\ref{th:fd_velocity_smooth} we have found a limitation in the rate of convergence of order $\mathcal O((\Delta t)^{1/2})$ in the temporal error. To our knowledge this paper is the first one in which error bounds with constants independent on $\nu$ are obtained for the fully discrete case without assuming nonlocal compatibility conditions for the Navier-Stokes equations. At present it is an open problem to find out if this~$\mathcal O((\Delta t)^{1/2})$ limitation could be avoided using a different technique of analysis.
\end{remark}

For $k\ge 2$, we will apply Lemma~\ref{le:fd_stab} to the differences
$$
\be_h^n=\bu_h^n - \bU_h^n,
$$
where
$$
\bu_h^n = \Pi^{\rm div}_h \bu(t_n) \quad n=0,1,\ldots,N.
$$
By projecting~(\ref{NS}) onto~$V_h^{\rm div}$ and adding and subtracting terms, it is easy to check that the projections
$\bu_h^n$ satisfy for $n=1,\ldots, N$,
$$
%\begin{equation}
%\label{eq:NS_proy}
D_t\bu_h^n+\nu A_h \bu_h^n+B_h(\bu_h^n,\bu_h^n)+\mu C_h \bu_h^n=
\nu A_h\btt_{h,0}^n -\btt_{1,h}^n-D_h \tilde t_2^n,
%\end{equation}
$$
where, $ \btt_{h,0}^n=\bu_h^n-{\bs}_h(t_n)$, $\btt_{1,h}^n=\Pi_h^{\rm div}{\boldsymbol\tau}_{1,2}^n+\tilde{\boldsymbol\tau}_2^n$, and $\tilde t_2^n=\tau_3(t_n)+\tilde\tau_4^n$,
${\boldsymbol\tau}_{1,2}^n$ and $\tau_3$ being those
defined in~(\ref{tau_12}) and~(\ref{eq:tau3}),
and where
$$
%\begin{equation}\label{eq:tau3_fd}
\tilde{\boldsymbol\tau}_2=B_h(\bu_h^n,\bu_h^n)-B_h(\bu(t_n),\bu(t_n)),\quad
\tilde\tau_4^n=\mu\nabla \cdot(\bu_h^n-\bu(t_n)).
%\end{equation}
$$

We will need estimates in~$L^\infty(\Omega)$ of~$\bu_h^n$, which
are given by the following result.

\begin{lema}\label{le:est_inf} There is a constant~$C_D>0$ such that the following bounds hold:
\begin{align*}
%\label{eq:est_inf1}
\bigl\|\Pi_h^{\rm div}\bu\bigr\|_\infty &\le C_D\left\|\bu\right\|_2,\quad
%\label{eq:est_inf2}
\bigl\|\nabla\Pi_h^{\rm div}\bu\bigr\|_\infty
\le C_D\left\|\bu\right\|_3.
\end{align*}
\end{lema}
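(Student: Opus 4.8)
The plan is to control $\Pi_h^{\rm div}\bu$ by comparing it with the projection $\bs_h$ introduced at the end of Section~\ref{sect:prelim}, whose $L^\infty$ and $W^{1,\infty}$ norms are already controlled by \eqref{lasinfinito} and \eqref{cotainfty1}, and to absorb the negative powers of $h$ produced by the inverse inequality \eqref{inv} into the higher approximation orders of \eqref{eq:cota_pih} and \eqref{eq:cotanewpro}. Here $\bu$ denotes the divergence-free Navier--Stokes velocity, so that (working in this section with second order elements, $k\ge 2$, and with $\bu\in H^2(\Omega)^d$, resp. $H^3(\Omega)^d$) both \eqref{eq:cota_pih} and \eqref{eq:cotanewpro} are available with $j=1$ and $j=2$, and $\Pi_h^{\rm div}\bu-\bs_h\in V_h$, so that \eqref{inv} applies to the difference (after summing cellwise bounds and using quasi-uniformity, $h_K\approx h$).

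For the first bound I would write $\|\Pi_h^{\rm div}\bu\|_\infty \le \|\Pi_h^{\rm div}\bu-\bs_h\|_\infty+\|\bs_h\|_\infty$, apply \eqref{inv} in the case $m=n=0$, $q=2$, $p=\infty$ to get $\|\Pi_h^{\rm div}\bu-\bs_h\|_\infty \le C_{\mathrm{inv}}h^{-d/2}\|\Pi_h^{\rm div}\bu-\bs_h\|_0$, and then bound $\|\Pi_h^{\rm div}\bu-\bs_h\|_0 \le \|(I-\Pi_h^{\rm div})\bu\|_0+\|\bu-\bs_h\|_0\le Ch^2\|\bu\|_2$ using \eqref{eq:cota_pih} and \eqref{eq:cotanewpro} with $j=1$. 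This leaves $Ch^{2-d/2}\|\bu\|_2$, which is bounded because $2-d/2\ge 1/2>0$ for $d\le 3$ and $h$ is bounded; combined with $\|\bs_h\|_\infty\le C\|\bu\|_2$ from \eqref{lasinfinito} this yields the first estimate.

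For the gradient bound I would proceed identically, now invoking \eqref{inv} in the form $\|\nabla(\Pi_h^{\rm div}\bu-\bs_h)\|_\infty \le C_{\mathrm{inv}}h^{-1-d/2}\|\Pi_h^{\rm div}\bu-\bs_h\|_0$ (the case $m=1$, $n=0$, $q=2$, $p=\infty$, exactly as in \eqref{eq:whynot}). Using \eqref{eq:cota_pih} and \eqref{eq:cotanewpro} with $j=2$ gives $\|\Pi_h^{\rm div}\bu-\bs_h\|_0\le Ch^3\|\bu\|_3$, so the inverse inequality again produces $Ch^{2-d/2}\|\bu\|_3$, bounded for $d\le 3$. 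The remaining term $\|\nabla\bs_h\|_\infty$ is controlled by $C\|\nabla\bu\|_\infty$ through \eqref{cotainfty1}, and $\|\nabla\bu\|_\infty\le C\|\bu\|_3$ follows from the Sobolev embedding \eqref{sob1} (i.e. $H^3(\Omega)\hookrightarrow W^{1,\infty}(\Omega)$ for $d\le 3$). Collecting both contributions gives $\|\nabla\Pi_h^{\rm div}\bu\|_\infty\le C\|\bu\|_3$.

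The computation is essentially routine; the only point requiring attention is verifying that the exponent $2-d/2$ produced by the inverse inequality stays nonnegative in the worst case $d=3$, where it equals $1/2$. This is precisely why the gradient estimate must buy one extra order of approximation (hence the regularity $H^3$ and the restriction $k\ge 2$), mirroring the role of the factor $h^{-5/2}$ in \eqref{eq:whynot}. I would finally take $C_D$ to be the maximum of the resulting constants, all of which are independent of $\nu$.
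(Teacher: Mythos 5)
Your proof is correct, but it follows a different route than the paper's. The paper compares $\Pi_h^{\rm div}\bu$ with the Lagrange interpolant $I_h\bu$ rather than with $\bs_h$: it applies the inverse inequality to $(\Pi_h^{\rm div}-I_h)\bu$, splits this as $(\Pi_h^{\rm div}-I)\bu+(I-I_h)\bu$, and then uses \emph{both} orders of the approximation bounds (\ref{eq:cota_pih}) and (\ref{cota_inter}) (i.e.\ $j=2,3$ and $n=2,3$) in a geometric-mean fashion, so that the resulting factor $h^{5/2}=(h^2\cdot h^3)^{1/2}$ exactly cancels $h^{-(2+d)/2}$ when $d=3$, leaving $C(\|\bu\|_2\|\bu\|_3)^{1/2}\le C\|\bu\|_3$; the remaining term $\|\nabla I_h\bu\|_\infty$ is controlled by the elementary $W^{1,\infty}$ stability of the interpolant and Sobolev embedding. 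Your version instead pays the single order $h^3$ from (\ref{eq:cota_pih}) and (\ref{eq:cotanewpro}) with $j=2$, leaves over a factor $h^{2-d/2}=h^{1/2}$, and absorbs it using boundedness of $h$ --- a device the paper itself uses elsewhere (e.g.\ before (\ref{eq:auxtau2})), so this is unobjectionable. The trade-off is in the ingredients: your comparison function $\bs_h$ requires the $W^{1,\infty}$ stability bound (\ref{cotainfty1}), which rests on the nontrivial pointwise Stokes estimates of \cite{chenSiam}, whereas the paper's proof via $I_h\bu$ needs nothing beyond elementary interpolation theory and the $L^2$ bound (\ref{eq:cota_pih}); on the other hand, your argument runs exactly parallel to the computation (\ref{eq:whynot}) in Section~\ref{sec:ana_wo_comp} and avoids the geometric-mean step, which is arguably the one non-routine idea in the paper's proof. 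Both routes give constants independent of $\nu$, and both correctly identify why $k\ge 2$ (hence $H^3$ regularity) is needed to beat the $h^{-1-d/2}$ from the inverse inequality in the gradient bound.
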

\begin{proof} We prove the second inequality for $d=3$, the case $d=2$ and the first inequality being proved
similarly. By adding $\pm I_h\bu$, using~(\ref{inv}) and~(\ref{cota_inter})
we have
$$
\bigl\|\nabla\Pi_h^{\rm div}\bu\bigr\|_\infty \le
C\left(h^{-(2+d)/2}\bigl\| (\Pi_h^{\rm div} -I_h)
\bu\bigr\|_0 + \left\|\nabla \bu\right\|_\infty\right).
$$
For the first term, by writing $(\Pi_h^{\rm div} -I_h)=
(\Pi_h^{\rm div} -I)+(I -I_h)$ and using~(\ref{eq:cota_pih}) with $j=2,3$
and~(\ref{cota_inter}) with $m=0$ and~$n=2,3$ we have
$$
h^{-(2+d)/2}\bigl\| (\Pi_h^{\rm div} -I_h)
\bu(t)\bigr\|_0 \le C(\left\|\bu\right\|_2\left\|\bu\right\|_3)^{1/2}
\le C\left\|\bu\right\|_3.
$$
Then, the result follows by estimating $\left\|\nabla \bu(t)\right\|_\infty$
with Sobolev's inequality~(\ref{sob1}).
\end{proof}

Thus, as with the case~$\bw_h^n=\bs_h(t_n)$, we also have that for~$\bw_h^n=\bu_h^n$ the value~$g_h^n$ defined
in~(\ref{eq:el_g_n}) satisfies the bound~(\ref{eq:fd_cota_exp2}). We also
observe that~$\tilde{\boldsymbol\tau}_{2}$ and~$\tilde\tau_4$ can be
estimated similarly to~(\ref{eq:auxtau2}) so that we can write
$$
%\begin{equation}\label{eq:fd_auxtau2}
\|\tilde{\boldsymbol\tau}_2^n\|_0\le C\left\|\bu(t_n)\right\|_2
\|\bu(t_n)-\bu_h^n\|_{1}\le C \left\|\bu\right\|_2\left\|\bu\right\|_{3}h^2
\le CM_2\frac{M_3}{t_n^{1/2}}h^2,
%\end{equation}
$$
and, similarly to~(\ref{eq:cota_tau3_tau4}),
$$
%\begin{equation}\label{eq:fd_tau3_tau4}
\|\tau_3^n+\tilde \tau_4^n\|_0^2 \le C(\|p(t_n)\|_{2}^2+
\mu^2\|\bu(t_n)\|_{3}^2)h^{4}\le C(1+\mu^2)\frac{M_3^2}{t_n}h^{4},
%\end{equation}
$$
and, applying~(\ref{eq:cota_pih}) and (\ref{eq:cotanewpro})
$$
%\begin{equation}\label{eq:cota_t0}
\|\nabla\btt_{h,0}^n\|_0^2 \le C \|\bu(t_n)\|_3^2h^4 \le C\frac{M_3^2}{t_n}h^4.
%\end{equation}
$$
Then, noticing that
$$
\Delta t\sum_{j=1}^{n} \frac{1}{t_j}=1+\sum_{j=2}^{n-1}\frac{1}{j}
\le 1+\log\left(\frac{t_{n+1}}{\Delta t}\right).
$$
we have
\begin{eqnarray*}
\lefteqn{
\Delta t \sum_{j=1}^n \left(\nu\|\nabla \bt_{h,0}^j\|_0^2 +\|\bt_{h,1}^j\|_0^2
+\frac{1}{\mu} \| t_{h,2}^j\|_0^2\right)}\\
&\le&
C\left(\log\left(\frac{t_{n+1}}{\Delta t}\right)M_3^2
\left(\nu+M_2^2+ \frac{1}{\mu}+
\mu\right)h^4 +K_{3,0}\Delta t\right).
\end{eqnarray*}
We conclude with the following result.
\begin{Theorem}\label{th:fd_non_k2}
For $T>0$, assuming the solution $(\bu,p)$ of (\ref{NS}) satisfies (\ref{eq:velo_h2_ass}), and that $M_3$, $K_3$ and~$K_{3,0}$ are finite.
Assume that piecewise approximations in the velocity of degree $k\ge 2$ are used. Then,
the following
bound holds for the error $\be_h^n=\Pi_h^{\rm div}\bu(t_n)-\bU_h^n$, $1\le n\le N$
\begin{eqnarray}
\lefteqn{
\|\be_h^n\|_0^2+\sum_{j=1}^n\|\be_h^j- \be_h^{j-1}\|_0^2+
\Delta t\sum_{j=1}^n\left(\nu\|\nabla \be_h^n\|_0^2
+\mu\|\nabla \cdot \be_h^n\|_0^2\right)}%\hphantom{\qquad\qquad\qquad}
\nonumber\\
&\le& \exp\left(\hat L_1t_n + 2M_3\sqrt{t_n}\right) \left(C_3\log\left(\frac{t_{n+1}}{\Delta t}\right)h^{4} + K_{3,0}\Delta t\right),
\label{eq:fd_ebound_velo_k2}
\end{eqnarray}
where $\hat L_1$ is defined in~(\ref{eq:fd_L2}), and $C_3$ depends on~$M_2$ and~$M_3$ but not on inverse powers of~$\nu$.
\end{Theorem}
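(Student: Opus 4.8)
The plan is to apply the discrete stability estimate of Lemma~\ref{le:fd_stab} with the comparison sequence $\bw_h^n=\bu_h^n=\Pi_h^{\rm div}\bu(t_n)$. Comparing the identity satisfied by $\bu_h^n$ displayed just above the statement with~(\ref{eq:fd_aux1}), one reads off the identifications $\bt_{h,0}^n=\btt_{h,0}^n$, $\bt_{h,1}^n=\btt_{1,h}^n$, and $t_{h,2}^n=\tilde t_2^n$ (the $\Pi_h^{\rm div}\bff(t_n)$ term cancels upon subtracting the scheme~(\ref{eq:galp_fd}), and the signs are irrelevant since only the $L^2$ norms of these truncation errors enter the conclusion). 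With this choice Lemma~\ref{le:fd_stab} controls the differences $\be_h^n=\bu_h^n-\bU_h^n$ by the initial error plus the accumulated truncation errors, weighted by the discrete Gronwall factor.

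First I would check the hypothesis $\Delta t\,g_h^n<\gamma$. By Lemma~\ref{le:est_inf} together with~(\ref{eq:u-inf}), the quantity $g_h^n$ of~(\ref{eq:el_g_n}) satisfies $g_h^n\le\hat L_1+2M_3\,t_n^{-1/2}$ with $\hat L_1$ as in~(\ref{eq:fd_L2}); the worst case $n=1$ gives $\Delta t\,g_h^1\le\hat L_1\Delta t+2M_3(\Delta t)^{1/2}$, which is below $\gamma$ once $\Delta t$ is small enough. The same pointwise bound, summed as the Riemann-type sum $\Delta t\sum_{j=1}^{n-1}t_j^{-1/2}\le 2\sqrt{t_n}$, produces the Gronwall factor $\exp(\Delta t\sum_{j=1}^{n-1}g_h^j)\le\exp(\hat L_1 t_n+2M_3\sqrt{t_n})$, i.e.~(\ref{eq:fd_cota_exp2}), which stays bounded and free of negative powers of $\nu$.

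Next I would insert the truncation-error estimates assembled before the statement. The bounds for $\|\tilde{\boldsymbol\tau}_2^n\|_0$, $\|\tau_3(t_n)+\tilde\tau_4^n\|_0^2$, and $\|\nabla\btt_{h,0}^n\|_0^2$ all scale like $h^4/t_n$ (up to factors $1$, $M_2^2$, $\mu$, $\mu^{-1}$, $\nu$, $M_3^2$), whereas the time-discretization part $\Pi_h^{\rm div}\boldsymbol\tau_{1,2}^n$ is controlled, as in the $k=1$ computation using~(\ref{tau_12}), by $\|\boldsymbol\tau_{1,2}^n\|_0^2\le\frac{1}{2}\int_{t_{n-1}}^{t_n}t\,\|\partial_{tt}\bu\|_0^2\,dt$. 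Summing the first group against the harmonic sum $\Delta t\sum_{j=1}^n t_j^{-1}=\sum_{j=1}^n 1/j\le 1+\log(t_{n+1}/\Delta t)$ yields the term $C_3\log(t_{n+1}/\Delta t)h^4$, while the $\boldsymbol\tau_{1,2}$ part sums to $\frac{\Delta t}{2}\int_0^{t_n}t\,\|\partial_{tt}\bu\|_0^2\,dt\le\frac{\Delta t}{2}K_{3,0}$, giving the $K_{3,0}\Delta t$ term; this is exactly the combined bound displayed above. The initial error is $\be_h^0=\Pi_h^{\rm div}\bu(0)-\bs_h(0)$, and by the triangle inequality with~(\ref{eq:cota_pih}) and~(\ref{eq:cotanewpro}) for $j=1$ (using $\bu(0)\in H^2$, guaranteed by $M_2<\infty$ in~(\ref{eq:velo_h2_ass})) one finds $\|\be_h^0\|_0^2\le Ch^4M_2^2$, which is absorbed into the logarithmic $h^4$ term. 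Collecting these in Lemma~\ref{le:fd_stab} gives~(\ref{eq:fd_ebound_velo_k2}).

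I expect the main obstacle to be the careful handling of the temporal singularities rather than any single hard inequality: every spatial truncation error inherits a factor $t_n^{-1/2}$ or $t_n^{-1}$ from the reduced regularity of $\bu$ near $t=0$, and these are only borderline integrable. The two delicate points are therefore controlling $\Delta t\sum_j t_j^{-1}$ by a harmonic sum, which converts the $t^{-1}$ singularity into the mild factor $\log(t_{n+1}/\Delta t)$ instead of a negative power of $\Delta t$, and controlling $\Delta t\sum_j t_j^{-1/2}$ by $2\sqrt{t_n}$ so the Gronwall exponential remains bounded. The resulting $\mathcal O((\Delta t)^{1/2})$ temporal rate is not an artifact but reflects that $\partial_{tt}\bu$ is only weighted-square-integrable near $t=0$ (finiteness of $K_{3,0}$), which caps the $\boldsymbol\tau_{1,2}$ contribution at $K_{3,0}\Delta t$ in the squared error.
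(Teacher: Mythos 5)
Your proposal is correct and follows essentially the same route as the paper: comparison with $\bw_h^n=\Pi_h^{\rm div}\bu(t_n)$ via Lemma~\ref{le:fd_stab}, the $L^\infty$ bounds of Lemma~\ref{le:est_inf} to control $g_h^n\le \hat L_1+CM_3t_n^{-1/2}$ and hence the Gronwall factor $\exp(\hat L_1t_n+2M_3\sqrt{t_n})$, the harmonic-sum argument $\Delta t\sum_j t_j^{-1}\le 1+\log(t_{n+1}/\Delta t)$ for the spatial truncation errors scaling like $h^4/t_n$, and the weighted bound on $\boldsymbol\tau_{1,2}^n$ giving $K_{3,0}\Delta t$. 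Your explicit treatment of the initial error $\|\be_h^0\|_0^2\le Ch^4M_2^2$ (absorbed into the logarithmic term since $\log(t_{n+1}/\Delta t)\ge\log 2$) is a detail the paper leaves implicit, and it is handled correctly.
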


For the pressure, we take inner product of the first equation in~(\ref{NS})
with~$\bv_h\in V_h$, subtract~(\ref{eq:galp_fd_weak}), add~$\pm (\pi_h(t_n),\nabla\cdot\bv_h)$ and use the
inf-sup condition, to obtain (after some rearrangements)
\begin{eqnarray}\label{eq:fd_press_aux}
\beta_0\|p_h^n-\pi_h(t_n)\|_0&\le& \nu\|\nabla \be_h^n\|_0+\|B(\bu_h^n,\bu_h^n)-B({\bU}_h^n,{\bU}_h^n)\|_{-1}+ \|D_t\be_h^n\|_{-1}\nonumber\\
&&+\mu\|\nabla\cdot \be_h^n\|_0 +\|\Pi_h^{\rm div}{\boldsymbol\tau}_{1,2}^n\|_{-1}+\|(I-\Pi_h^{\rm div})\partial_t\bu(t_n)\|_{-1}\nonumber
\\
&&+\|\tilde{\boldsymbol\tau}_2\|_{-1}+\|\tau_3\|_0+\|\tilde\tau_4\|_0+\|l_h(t_n)\|_0,
\end{eqnarray}
where $\be_h^n=\bu_h^n-\bU_h^n$. As in~Section~\ref{se:pressure},
we estimate
$\|\Pi_h^{\rm div}{\boldsymbol\tau}_{1,2}^n\|_{-1}\le C
\|\Pi_h^{\rm div}{\boldsymbol\tau}_{1,2}^n\|_{0}$
and~$\|\tilde{\boldsymbol\tau}_2\|_{-1}\le
C\|\tilde{\boldsymbol\tau}_2\|_{0}$. Using~(\ref{eq:cotanewpropre}) with
$j=1$ we have~$\|l_h(t_n)\|_0\le \nu h^2\|\bu(t_n)\|_2$. Also, repeating
the arguments that lead from~(\ref{eq:auxB-1}) to~(\ref{eq:auxB-12}) with
$\bs_h$ replaced~by~$\Pi_h^{\rm div}\bu$, and in view of~Lemma~\ref{le:est_inf}, we have
\begin{eqnarray*}%\label{eq:fd_auxB-12}
\|B(\bu_h^n,\bu_h^n)-B(\bU_h^n,\bU_h^n)\|_{-1}
&\le &
C\left\|\bu(t_n)\right\|_2
 \left(\|\be_h^n\|_0+\|\nabla \cdot \be_h^n\|_0\right) \\
 &&+
C\left( h^{-d/2}\left\|\be_h^n\right\|_0+h^{-1/2}\|\nabla \cdot \be_h^n\|_0
\right)\|\be_h^n\|_0.\nonumber
\end{eqnarray*}

We now estimate~$\|(I-\Pi_h^{\rm div})\partial_t\bu(t_n)\|_{-1}$. For
$\boldsymbol\phi\in H^1_0(\Omega)^d$, we use the Leray decomposition
$\boldsymbol\phi=\Pi\boldsymbol\phi + \nabla q$, and recall that
$\|\Pi\boldsymbol\phi\|_1\le C\|\boldsymbol\phi\|_1$ and
$\|q\|_2\le C\|\boldsymbol\phi\|_1$. We notice that
\begin{align}
\left((I-\Pi_h^{\rm div})\partial_t\bu,\nabla q\right)&=
-\left(\nabla\cdot (I-\Pi_h^{\rm div})\partial_t\bu,q\right)=
\left(\nabla\cdot (I-\Pi_h^{\rm div})\partial_t\bu,q-\pi_hq\right)
\nonumber\\
&=\left((I-\Pi_h^{\rm div})\partial_t\bu,\nabla (q-\pi_h q)\right)
\le Ch^2\|\partial_t\bu\|_1\|\boldsymbol\phi\|_1,
\label{eq:fd_aux3}
\end{align}
where in the last inequality we have applied~(\ref{eq:cota_pih}) with $j=0$ and~(\ref{eq:pressurel2})
with $m=1$ and~$j=1$.
We also have,
$$
\left((I-\Pi_h^{\rm div})\partial_t\bu,\Pi\boldsymbol\phi\right)=
\left((I-\Pi_h^{\rm div})\partial_t\bu,(I-\Pi_h^{\rm div})\Pi\boldsymbol\phi\right),
$$
so that applying~(\ref{eq:cota_pih}) with $j=0$, and together with~(\ref{eq:fd_aux3}), it easily follows that
\begin{equation}
\label{eq:est_1_ut}
\|(I-\Pi_h^{\rm div})\partial_t\bu(t_n)\|_{-1}\le Ch^2\|\partial_t\bu(t_n)\|_1.
\end{equation}
Finally, arguing as~Section~\ref{se:pressure} the term~$\|D_t\be_h^n\|_{-1}$ can be bounded by the terms on the the right-hand side of~(\ref{eq:fd_press_aux}) except itself, so that we can conclude with
the following result.

\begin{Theorem}\label{th:fd_press_k2} In the conditions of~Theorem~\ref{th:fd_non_k2}, there
exists a constant~$C>0$ not depending on inverse powers of~$\nu$ such the following bound holds,
$$
\beta_0^2\Delta t\sum_{j=1}^N \|p_h^n -\pi_h(t_n)\|_0^2 \le Cr(t_n,h,\Delta t)\left(1+ \frac{r(t_n,h,\Delta t)}{h^d}\right),
$$
where $r(t_n,h,\Delta t)$ is the right-hand side of~(\ref{eq:fd_ebound_velo_k2}).
\end{Theorem}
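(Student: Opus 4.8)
The plan is to start from the inf-sup estimate~\eqref{eq:fd_press_aux} and to note that, with the single exception of~$\|D_t\be_h^n\|_{-1}$, every term on its right-hand side has already been controlled in the lines preceding the statement. Indeed, the nonlinear term~$\|B(\bu_h^n,\bu_h^n)-B(\bU_h^n,\bU_h^n)\|_{-1}$ was split into a part linear in~$(\|\be_h^n\|_0+\|\nabla\cdot\be_h^n\|_0)$ with factor~$\|\bu(t_n)\|_2\le M_2$ and a quadratic part~$(h^{-d/2}\|\be_h^n\|_0+h^{-1/2}\|\nabla\cdot\be_h^n\|_0)\|\be_h^n\|_0$; the consistency terms obey $\|\Pi_h^{\rm div}\boldsymbol\tau_{1,2}^n\|_{-1}\le C\|\Pi_h^{\rm div}\boldsymbol\tau_{1,2}^n\|_0$, $\|\tilde{\boldsymbol\tau}_2\|_{-1}\le C\|\tilde{\boldsymbol\tau}_2\|_0$ and~\eqref{eq:est_1_ut}; and~$\mu\|\nabla\cdot\be_h^n\|_0$, $\|\tau_3\|_0$, $\|\tilde\tau_4\|_0$, $\|l_h(t_n)\|_0\le\nu h^2\|\bu(t_n)\|_2$ are immediate.

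The term~$\|D_t\be_h^n\|_{-1}$ I would dispose of exactly as~$\|\partial_t\be_h\|_{-1}$ in Section~\ref{se:pressure}. Since~\eqref{new1} and~\eqref{new2} are purely spatial inequalities, they apply verbatim to the element~$D_t\be_h^n\in V_h^{\rm div}$ in place of~$\partial_t\be_h$, giving $\|D_t\be_h^n\|_{-1}\le C\|A_h^{-1/2}D_t\be_h^n\|_0$. Subtracting~\eqref{eq:galp_fd} from the identity satisfied by $\bu_h^n=\Pi_h^{\rm div}\bu(t_n)$ yields an equation for~$D_t\be_h^n$; applying~$A_h^{-1/2}$ and using $\|A_h^{-1/2}\Pi_h^{\rm div}\bg\|_0\le\|\bg\|_{-1}$ together with $\|A_h^{-1/2}\nu A_h\btt_{h,0}^n\|_0=\nu\|\nabla\btt_{h,0}^n\|_0$ bounds~$\|A_h^{-1/2}D_t\be_h^n\|_0$ by precisely the quantities already present in~\eqref{eq:fd_press_aux}, in complete analogy with~\eqref{eq:pres_01}. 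Hence $\beta_0\|p_h^n-\pi_h(t_n)\|_0$ is controlled by a constant times the sum of the terms $\nu\|\nabla\be_h^n\|_0$, $\mu\|\nabla\cdot\be_h^n\|_0$, $M_2(\|\be_h^n\|_0+\|\nabla\cdot\be_h^n\|_0)$, the quadratic term, and the already estimated consistency terms.

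It then remains to square, multiply by~$\Delta t$, and sum over~$j=1,\dots,N$. For the linear and consistency contributions, $\Delta t\sum_j\nu\|\nabla\be_h^j\|_0^2$, $\Delta t\sum_j\mu\|\nabla\cdot\be_h^j\|_0^2$ and $\max_j\|\be_h^j\|_0^2$ are each $\le r(t_n,h,\Delta t)$ by~\eqref{eq:fd_ebound_velo_k2} (using that~$r$ increases with the time level), while the summed truncation norms reproduce exactly the consistency sum bounded in the proof of Theorem~\ref{th:fd_non_k2}; together these give the part linear in~$r$. For the quadratic part one uses $h^{-1}\le h^{-d}$ (as $d\ge 2$) to write
\[
\Delta t\sum_{j=1}^N\left(h^{-d/2}\|\be_h^j\|_0+h^{-1/2}\|\nabla\cdot\be_h^j\|_0\right)^2\|\be_h^j\|_0^2
\le 2h^{-d}\Bigl(\max_j\|\be_h^j\|_0^2\Bigr)\Delta t\sum_{j=1}^N\left(\|\be_h^j\|_0^2+\|\nabla\cdot\be_h^j\|_0^2\right),
\]
whose right-hand side is $\le Ch^{-d}r(t_n,h,\Delta t)^2$, again by~\eqref{eq:fd_ebound_velo_k2}. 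Collecting the two contributions gives $\beta_0^2\Delta t\sum_{j=1}^N\|p_h^j-\pi_h(t_j)\|_0^2\le C\bigl(r+h^{-d}r^2\bigr)=Cr(1+r/h^d)$, as claimed.

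The genuinely delicate point is the transfer of the~$\|A_h^{-1/2}D_t\be_h^n\|_0$ argument from the continuous setting of Section~\ref{se:pressure} to the time-discrete difference~$D_t\be_h^n$; once this is in place the remaining work is the bookkeeping of the time sums, where the quadratic nonlinear term is what produces the characteristic~$r^2/h^d$ factor in the final bound.
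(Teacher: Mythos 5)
Your proposal is correct and follows essentially the same route as the paper: the paper's own proof consists of the term-by-term estimates preceding the theorem, disposes of $\|D_t\be_h^n\|_{-1}$ by the same $A_h^{-1/2}$-duality argument ("arguing as in Section~\ref{se:pressure}"), and then squares, multiplies by $\Delta t$, and sums, with the quadratic nonlinear term producing the $r^2/h^d$ factor exactly as you describe. Your write-up merely makes explicit the details the paper leaves to the reader, so there is nothing to correct.
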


\begin{remark} As in Remark~\ref{re:dt/h}, we observe that if
the two sources of error (temporal and spatial) are to be balanced
in~(\ref{eq:fd_ebound_velo_k2}) at the final time $t_N=T$, then $\Delta t\approx h^4\log(N)$. Thus, $h^{-d} \Delta t =\mathcal O(h^{4-d}\log(N))$, so that the
presence of negative powers of~$h$ in the error bound in
Theorem~\ref{th:fd_press_k2} does not alter the convergence rate, and
the error is~$\mathcal O(h^4\log(N)+\Delta t)$.
\end{remark}

\section{Numerical studies}\label{sec:numres}

In this section, numerical studies will be presented that support the analytical results with respect to the  order
of convergence and the independence of the errors of $\nu$. As usual for such purposes, an example with a known solution is considered.

Let $\Omega=(0,1)^2$ and $T=5$, then the Navier--Stokes equations \eqref{NS} were considered
with the prescribed solution
\begin{eqnarray*}
\bu &=& \cos(t)\begin{pmatrix} \sin(\pi x-0.7)\sin(\pi y + 0.2) \\
\cos(\pi x-0.7)\cos(\pi y + 0.2)
\end{pmatrix}, \\
p &=& \cos(t)(\sin(x)\cos(y)+(\cos(1)-1)\sin(1)).
\end{eqnarray*}
It is clear that examples constructed in this way satisfy the nonlocal compatibility condition.
The simulations were performed
for the $P_2/P_1$ pair of finite element spaces on a regular triangular grid
consisting on the coarsest level~$0$ of two mesh cells (diagonal from lower left to upper right). The number of degrees of freedom for velocity/pressure
on level~$3$ is $578/81$ and on level~$8$ it is $526338/66049$. As temporal discretization, the Crank--Nicolson scheme was used.
The grad-div stabilization parameter was chosen to be $\mu = 0.25$ in all simulations,
see \cite{los_cuatro_oseen} for a motivation of this specific choice.
In each discrete time, the fully nonlinear problem was solved. The simulations
were performed with the code MooNMD \cite{JM04}.

\begin{figure}[t!]
\centerline{\includegraphics[width=0.45\textwidth]{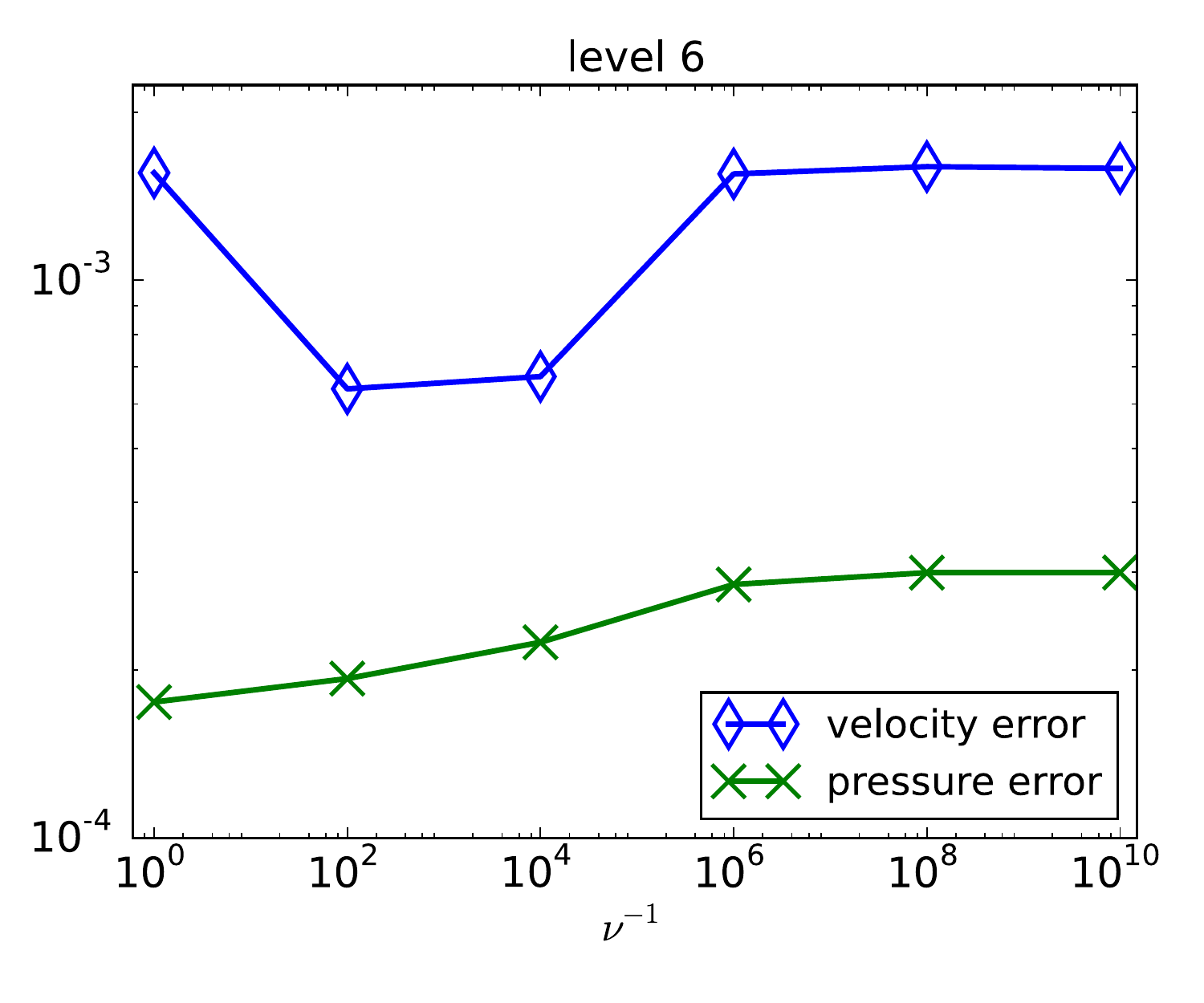}\hspace*{1em}
\includegraphics[width=0.45\textwidth]{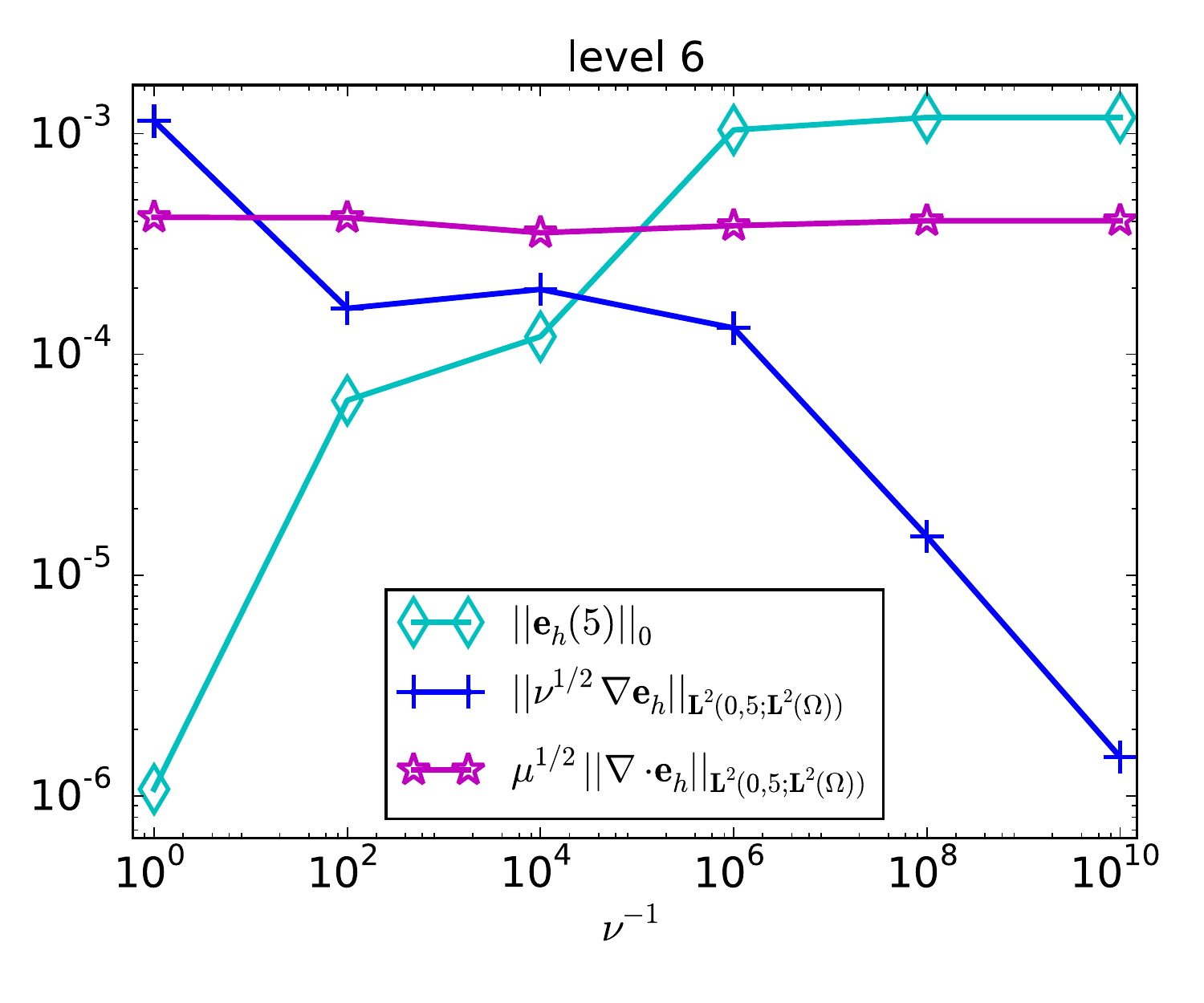}
}
\caption{Left: numerical results for the velocity error, left-hand side of estimate \eqref{eq:error_bound_velo},
and the pressure error, integral term on the left-hand side of  estimate \eqref{eq:pressure_est} with $p-p_h$:
different values of $\nu$ for a fixed spatial grid.  Right:
individual contributions of the left-hand side of \eqref{eq:error_bound_velo}.}
\label{fig:numres}
\end{figure}

Results of the numerical studies are presented in Figs.~\ref{fig:numres} and~\ref{fig:numres_1}. For the
simulations on level~$6$ with different values of $\nu$, Fig.~\ref{fig:numres}, the equidistant time step
$0.0625$ was used in the Crank--Nicolson scheme. It can be clearly seen that the
velocity and pressure errors, which were bounded in the analysis, are independent of
$\nu$. Considering the individual contributions of the velocity error, one can
observe that in particular the norm of the divergence is almost the same for all
values of $\nu$.

For the simulations with constant $\nu$ on a sequence of grids, the smaller time
steps $0.002$ and
$0.001$ were used. Because the curves for both time steps are almost on top of each other,
see Fig.~\ref{fig:numres_1},
it can be concluded that the temporal error is negligible. The pressure error
decreases somewhat faster than predicted by the theory with an order  of nearly $2.9$.
Also the velocity error decreases faster on coarse grids because on these grids
the contribution $\|\be_h(5)\|_0$ dominates which is reduced by a higher order than
two, compare the right picture
of Fig.~\ref{fig:numres_1}. But on finer grids, the predicted second order
convergence can be seen.

\begin{figure}[t!]
\centerline{
\includegraphics[width=0.45\textwidth]{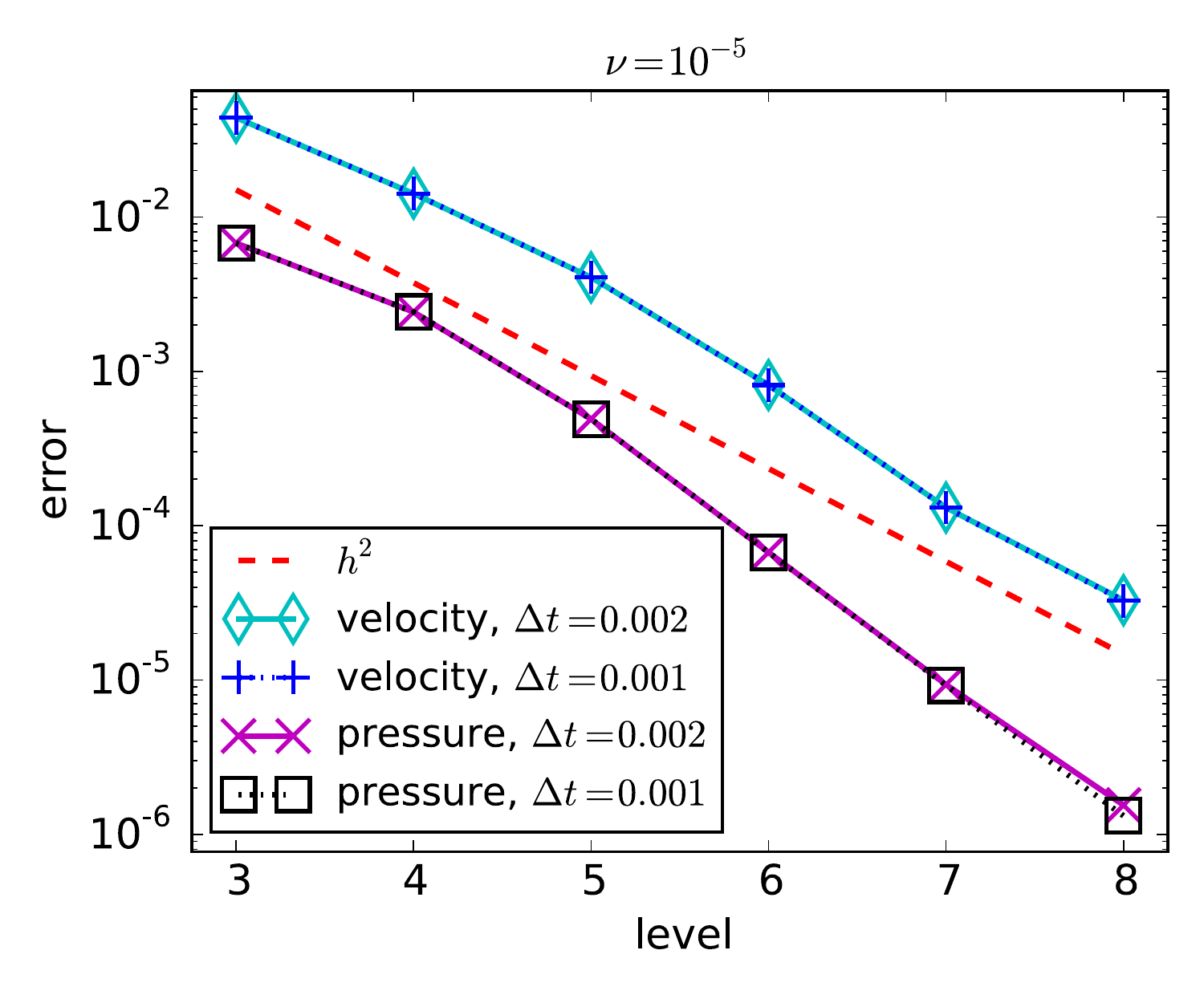}
\hspace*{1em}
\includegraphics[width=0.45\textwidth]{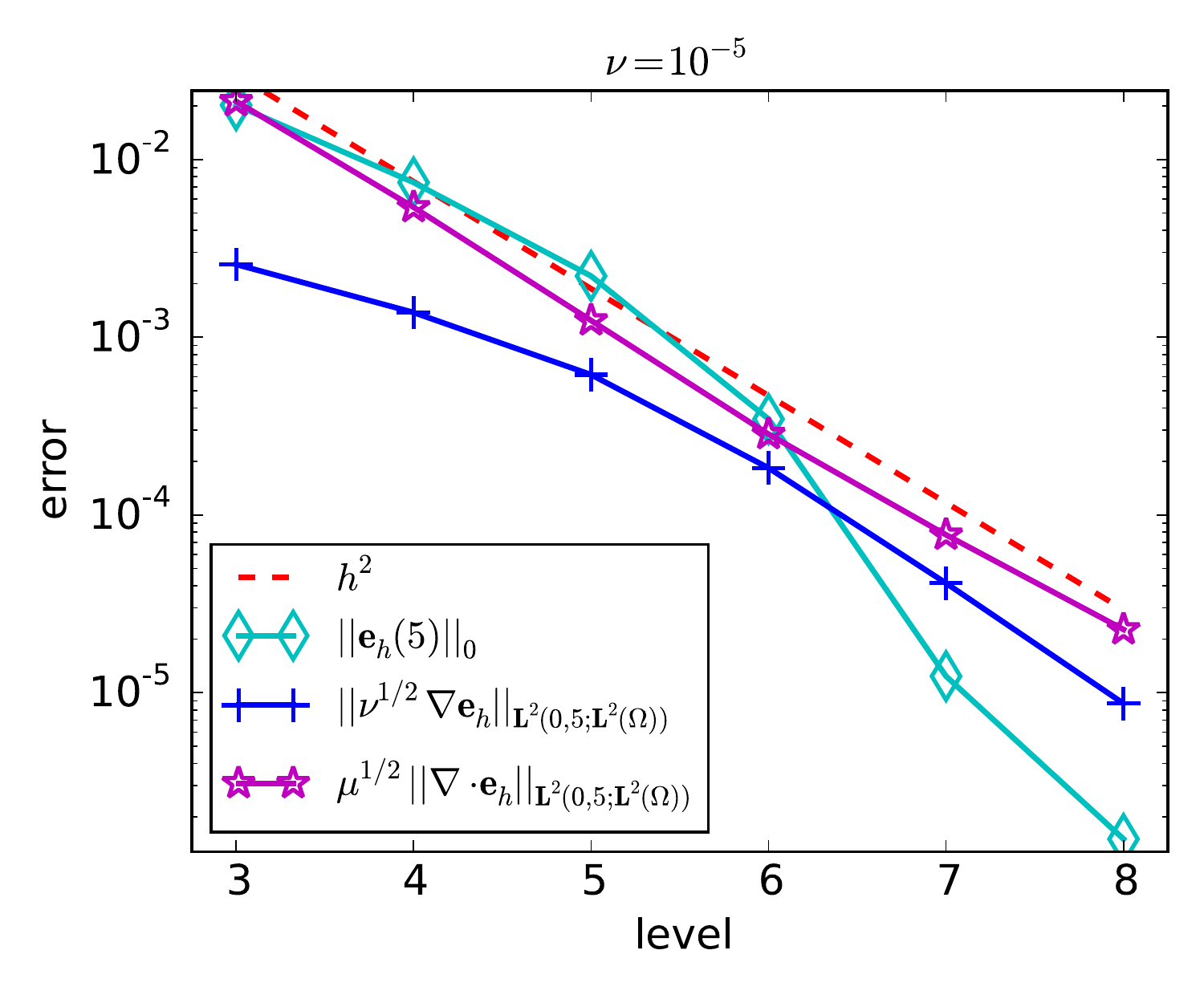}}
\caption{Numerical results for the velocity error, left-hand side of estimate \eqref{eq:error_bound_velo},
and the pressure error, integral term on the left-hand side of  estimate \eqref{eq:pressure_est} with $p-p_h$.
Left: different grid levels and different lengths of the time step for fixed $\nu$
(the lines for the time steps $0.001$ and $0.002$ are almost on top of each other).
Right: the individual terms of the left-hand side of \eqref{eq:error_bound_velo}.}
\label{fig:numres_1}
\end{figure}

\section{Summary}
Inf-sup stable finite element discretizations are considered to approximate the evolutionary
Navier--Stokes equations. The Galerkin finite element method is augmented with a grad-div stabilization term. It had been reported in the literature \cite{JK10,RL10} that
stable simulations were obtained
in the computation of turbulent flows using exclusively grad-div stabilization.
This observation is the motivation of the present paper. Error bounds for the Galerkin
plus grad-div stabilization method were derived, both for the continuous-in-time case
and a fully discrete scheme. The error constants do not depend on inverse
powers of $\nu$, although they depend on norms of the solution that are assumed to be bounded.
The paper extends a previous work by the same authors \cite{los_cuatro_oseen}, where
the evolutionary Oseen equations were considered. The analysis covers both the case in which
the solution is assumed to be smooth and the practically relevant situation
in which nonlocal compatibility conditions are not satisfied and, hence, the derivatives of the solution cannot be assumed to be bounded up to~$t=0$. To the best of our knowledge,
this paper  is the first one where this breakdown of regularity at $t=0$ has been taking into account
to analyze the effect of the grad-div stabilization. Related works like \cite{Lube_etalNS,Burman_Fer_numer_math,Burman_cmame2015} assume
that the solution satisfies nonlocal compatibility conditions.
The present paper also seems to be the first one where error bounds with constants independent of $\nu$
are obtained for a fully discrete method for the Navier--Stokes equations
without assuming nonlocal compatibility conditions.

\end{document}